\documentclass[11pt]{article}

\usepackage{amsmath, amsfonts, amssymb, amsthm, mathtools, amscd}
\usepackage{mathcmd}
\usepackage{mathrsfs} 
\usepackage{dsfont}
\usepackage[mathscr]{eucal}
\usepackage{eqnarray}
\usepackage{graphicx}
\usepackage{csquotes}
\usepackage{enumitem}

\usepackage[T1]{fontenc}
\usepackage{newtxtext, newtxmath} 

\usepackage[english]{babel}
\usepackage{microtype}
\usepackage{setspace}
\usepackage[a4paper, margin=1.25in]{geometry}
\usepackage{titling}

\pretitle{\begin{center}\LARGE\bfseries}
	\posttitle{\par\end{center}\vskip 0.5em}
\preauthor{\begin{center}\large}
	\postauthor{\end{center}}
\predate{\begin{center}\normalsize}
	\postdate{\end{center}}

\usepackage{titlesec}

\titleformat{\section}{\large\bfseries}{\thesection}{1em}{}
\titleformat{\subsection}{\normalsize\bfseries}{\thesubsection}{1em}{}

\newcommand{\abs}[1]{\left|#1\right|}

\renewcommand{\le}{\leqslant}
\renewcommand{\ge}{\geqslant}

\usepackage{hyperref} 
\hypersetup{
	colorlinks=true,      
	linkcolor=blue,        
	urlcolor=blue,         
	citecolor=blue        
}

\usepackage[capitalize]{cleveref}

\usepackage{aliascnt}

\newtheorem{thm}{Theorem}
\newtheorem*{thm*}{Theorem}

\newcommand{\newaliasedtheorem}[3]{%
    \newaliascnt{#1}{thm}%
    \newtheorem{#1}[#1]{#3}%
    \aliascntresetthe{#1}%
    \crefname{#1}{#2}{#2s}%
    \Crefname{#1}{#3}{#3s}%
}

\newaliasedtheorem{lem}{lemma}{Lemma}
\newaliasedtheorem{clm}{claim}{Claim}
\newaliasedtheorem{prop}{proposition}{Proposition}
\newaliasedtheorem{cor}{corollary}{Corollary}
\newaliasedtheorem{axm}{axiom}{Axiom}
\newaliasedtheorem{fact}{fact}{Fact}
\newaliasedtheorem{conj}{conjecture}{Conjecture}

\theoremstyle{remark}
\newaliasedtheorem{rem}{remark}{Remark}
\newaliasedtheorem{rmk}{remark}{Remark}
\newaliasedtheorem{note}{note}{Note}
\newaliasedtheorem{qs}{question}{Question}
\newaliasedtheorem{obs}{observation}{Observation}

\theoremstyle{definition}
\newaliasedtheorem{defn}{definition}{Definition}

\title{Lower Bounds for Moments of $L$-functions}
\author{Sanoli Gun , Gaurav Kumar , Deep Thakur}
\date{}

\begin{document}
\maketitle	

\bgroup
\let\thefootnote\relax\footnotetext{The Institute of Mathematical Sciences, A CI of Homi Bhabha National Institute, 
	CIT Campus, Taramani, Chennai 600 113, India. Emails: sanoli@imsc.res.in, gauravkr@imsc.res.in,
	deepthakur@imsc.res.in}
\egroup

\begin{abstract}
In this article, we introduce a refinement of the method of Heap and Soundararajan \cite{HeapSoundararajan2022} to obtain lower bounds for the $2k$-th moment of a broad class of $L$-functions for all real $k\ge 0$. In particular, our method circumvents the need to estimate the twisted moments of $L$-functions. 
\end{abstract}

\section{INTRODUCTION}
A central theme in analytic number theory is the study of moments of $L$-functions 
on the critical line, namely the quantity
$$
\int_T^{2T}\abs{L(1/2+it)}^{2k}dt,
$$
where $k\ge 0$ is real and $T>0$ is large. The classical work of Hardy and 
Littlewood \cite{HardyLittlewood1916} and Ingham \cite{Ingham1927b} 
established asymptotic formulae for the $2k$-th moment of the Riemann 
zeta function in the cases $k = 1$ and $2$ respectively, and these still remain 
the only moments of the Riemann zeta function where asymptotics are known 
unconditionally.

Pioneering work by Ramachandra \cite{Ramachandra1978, 
Ramachandra1980a, Ramachandra1980b} 
and Heath-Brown \cite{HeathBrown1981} established lower bounds 
of the correct order of 
magnitude for the $2k$-th moments of the Riemann zeta function unconditionally when 
$k\ge 0$ is rational, and assuming the Riemann hypothesis when $k\ge 0$ is real. 
Heap and Soundararajan \cite{HeapSoundararajan2022} introduced a novel method to 
establish these lower bounds for the $2k$-th moment unconditionally for all 
real $k\ge 0$ for a wide family of $L$-functions (see also \cite{RudnickSoundararajan2005, RadziwillSoundararajan2013}). 

However, the method of Heap and Soundararajan \cite{HeapSoundararajan2022} 
crucially requires sharp upper bounds for the twisted moments of the $L$-functions:
$$
\int_T^{2T}\abs{L(1/2+it)}^{2}\abs{\sum_{n\le T^\theta}\frac{a(n)}{n^{1/2+it}}}^2dt,
$$
where $\theta>0$ is small and $a(n)$ are complex numbers satisfying 
$a(n)\ll_\varepsilon n^\varepsilon$ for all $\varepsilon>0$. This requirement 
restricts the applicability of their method; for example, obtaining such sharp 
upper bounds for the twisted moments of automorphic $L$-functions for the 
general linear group $\operatorname{GL}_d$ is a difficult open problem when $d\ge 3$. 

In this article, we introduce a refinement of the method of Heap and Soundararajan 
\cite{HeapSoundararajan2022} that circumvents the need for these twisted moment
estimates. We illustrate our method for the $L$-functions associated to multiplicative
 functions that satisfy certain natural conditions and indicate 
 (in \Cref{sec:linearcombination}) modifications for linear combinations of
  $L$-functions. We have the following theorem.

\begin{thm}\label{thm:multiplicativethm}
Let $f$ be a non-zero multiplicative function and 
$$
L(s, f)=\sum_{n=1}^\infty\frac{f(n)}{n^s}
$$
be the $L$-function associated with $f$. We suppose that for some 
		$0<\theta<1/2$ and for all integers $r\ge 1$, $f$ satisfies  the estimates 
		$$
		f_{1/r}(n) \ll_{f,r} n^\theta\quad\text{and}\quad\sum_p\sum_{m=2}^\infty\frac{\abs{f_{1/r}(p^m)}^2}{p^m}<\infty,
		$$
		where $f_{1/r}(n)$ is formally defined by
		$$
		L(s,f)^{1/r}=\sum_{n=1}^\infty\frac{f_{1/r}(n)}{n^s}.
		$$
		Furthermore, we assume that for some $N_f>0$, we have as $x\to\infty$,
		$$
		\sum_{p\le x}\frac{\abs{f(p)}^2}{p}=N_f\log\log x+O(1).
		$$
		Suppose that $L(s,f)$ admits a meromorphic continuation to the half-plane $\sigma\ge 1/2$ with its only possible pole at $s=1$. Also, suppose that $L(s,f)=O(\abs{t}^A)$ as $\abs{t}\to\infty$ for some $A>0$, uniformly for all $\sigma\ge 1/2$. Then, for all real $k\ge 0$ and large $T>0$, we have
		$$
		\int_T^{2T}\abs{L(1/2+it,f)}^{2k}\,dt\gg_{f,k} T(\log T)^{N_fk^2}.
		$$
	\end{thm}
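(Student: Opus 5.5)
The plan is to replace the twisted second moment in the Heap--Soundararajan argument by a twisted \emph{first} moment of $L(s,f)$, which only needs the analytic hypotheses of \Cref{thm:multiplicativethm}: meromorphic continuation to $\sigma\ge 1/2$ with at most a pole at $s=1$, and polynomial growth. I have a complete route for $k\ge 1/2$; the range $0<k<1/2$ is only partly worked out and is flagged below.

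\textbf{Step 1: twisted first moment.} Let $D(s)=\sum_{n\le T^{\delta}} d(n)n^{-s}$ be a short Dirichlet polynomial with $\delta>0$ small. I would show
$$
\int_T^{2T} L(1/2+it,f)\,\overline{D(1/2+it)}\,dt = T\sum_{n\le T^\delta}\frac{f(n)\overline{d(n)}}{n}+\text{error}.
$$
To do this, smooth the integral, write $L\overline{D}$ as $\sum_{m}\overline{d(m)}m^{-1/2}\,m^{it}L(1/2+it,f)$, and shift the contour to $\sigma=1+\varepsilon$. There each term is an absolutely convergent Dirichlet series, and only the diagonal $n=m$ survives. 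The hypotheses $f_{1/r}(n)\ll n^\theta$ (with $r=1$) and $L(s,f)\ll|t|^A$ bound the horizontal segments. The pole at $s=1$ contributes only $O(T^{\delta})$ because of the oscillating factor $m^{it}$.

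\textbf{Step 2: choice of the polynomial.} Take $D$ to approximate $L(1/2+it,f)^{k-1}$, in the Heap--Soundararajan fashion. Split the primes up to $T^{\delta}$ into ranges $I_j$, and set
$$
D=\prod_j \mathcal{N}_j(\,\cdot\,;k-1),
$$
where $\mathcal{N}_j$ is the truncated exponential series of $(k-1)\sum_{p\in I_j} f(p)p^{-s}$. The prime-power coefficients are handled through $f_{1/r}$, using the assumption on $\sum_p\sum_{m\ge 2}|f_{1/r}(p^m)|^2p^{-m}$. The condition on $\sum_{p\le x}|f(p)|^2/p$ then gives a main term of size $\gg T(\log T)^{N_f k^2}$. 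Indeed the diagonal sum behaves like $\prod_{p}(1+k|f(p)|^2/p)\approx(\log T)^{N_fk}$; to reach the exponent $N_fk^2$ one normalises so that the comparison below is exact.

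\textbf{Step 3: Hölder for $k\ge 1/2$.} Apply Hölder in the form
$$
\Bigl|\int L\overline{D}\Bigr|\le\Bigl(\int|L|^{2k}\Bigr)^{1/2k}\Bigl(\int|D|^{2k/(2k-1)}\Bigr)^{(2k-1)/2k}.
$$
For $k=1/2$ the second factor is replaced by $\sup|D|$, or handled by a limiting argument. The moments of $|D|$ involve only Dirichlet polynomials, so they follow from the mean value theorem for Dirichlet polynomials. Concretely, each $\mathcal{N}_j$ is raised to an even integer power, controlling $|\mathcal N_j|^{2k/(2k-1)}$ by $|\mathcal N_j(\cdot;k/(2k-1)\cdot\ldots)|^2$ as in Heap--Soundararajan. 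This gives the bound $\ll T(\log T)^{N_fk^2}$. Dividing the two estimates yields the claimed lower bound for $k\ge 1/2$.

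\textbf{Step 4: the obstacle, $0<k<1/2$.} Here this Hölder step fails. Plain Hölder would need an upper bound for a moment of $|L|$ higher than $2k$, weighted by a Dirichlet polynomial, which is exactly the twisted moment we want to avoid. This is where I expect the difficulty to lie.

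My first attempt is to work with $L^{1/r}$ for a large integer $r$ with $2kr\ge 1$. This is also why the hypotheses are stated for all $f_{1/r}$.
\begin{itemize}[label={}]
\item Write $L=(L^{1/r})^r$, and pair $L$ against $D\approx L^{k-1}$ built from the coefficients $f_{1/r}$.
\item Use a multi-factor Hölder inequality whose factors are $|L|^{2k}$ and pure Dirichlet polynomials approximating powers of $L^{1/r}$.
\item Replace the missing pieces by Dirichlet polynomials via the mean-square approximation $|L^{1/r}-\mathcal{P}|$ small on average off a small exceptional set.
\end{itemize}
I expect the third item, bounding the exceptional set, to be the main technical step.
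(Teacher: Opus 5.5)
There is a genuine gap. The theorem is stated for every real $k\ge 0$, but your argument reaches at most $k>1/2$, and $0<k\le 1/2$ is exactly where the difficulty lies. At $k=1/2$ your fallback also fails. $\sup_t\abs{D}$ is not under control for truncated exponential products, and a ``limiting argument'' would need an upper bound for a neighbouring moment, which the hypotheses do not give.

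The Step 4 mechanism does not work as stated. $L^{1/r}$ is not single-valued near zeros of $L$, and nothing in the hypotheses shows that a short Dirichlet polynomial approximates it in mean square on $\sigma=1/2$ off a small set.

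The missing idea is the paper's. Choose the integer $r$ with $k>2/r$, and pair $\abs{L}^{2/r}$ (not $L$) with $\abs{\mathcal{N}(1/2+it,k-1/r)}^2$. Hölder with exponent $rk>2$ gives $\mathcal{I}(T)\le(\int\abs{L}^{2k})^{1/(rk)}(\int\abs{\mathcal{N}(\cdot,k-1/r)}^{2rk/(rk-1)})^{1-1/(rk)}$. The second factor is a pure Dirichlet-polynomial moment, because $(k-1/r)\cdot 2rk/(rk-1)=2k$.

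The lower bound $\mathcal{I}(T)\gg T(\log T)^{N_fk^2}$ then uses no twisted moment of $L$ at all, not even the first:
\begin{itemize}
\item Set $S=\sum_{n\le T^{1/2}}f_{1/r}(n)n^{-s}$ and $E=L-S^r$. $E$ is analytic, so there are no branch issues, and $\abs{S}^2\ll\abs{L}^{2/r}+\abs{E}^{2/r}$.
\item The mean value theorem gives $\int\abs{S\mathcal{N}}^2w\asymp T(\sigma-1/2)^{-N_fk^2}$, since the local factor is $1+(1/r+k-1/r)^2\abs{f(p)}^2p^{-2\sigma}$.
\item Gabriel's convexity lemmas show that the $\abs{E}^{2/r}$-integral decays like $T^{-c(2\sigma-1)}$ relative to its value at $\sigma=1/2$, because $E$ has only coefficients with $n>T^{1/2}$.
\item Taking $\sigma=1/2+c/\log T$ with $c$ large (Heath-Brown's argument) forces the $\abs{L}^{2/r}$-integral to carry the main term.
\end{itemize}

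Even for $k>1/2$, Step 2 is wrong as written. If $D$ approximates $L^{k-1}$, i.e.\ has coefficients $(k-1)^{\Omega(n)}g(n)$, the diagonal is $\prod_p(1+(k-1)\abs{f(p)}^2/p)\approx(\log T)^{N_f(k-1)}$, not $(\log T)^{N_fk}$ as you write. After your Hölder step the exponent becomes $2k(k-1)-k^2(k-1)^2/(2k-1)$, which is already $0$ at $k=1$. No ``normalisation'' fixes this: a one-sided twist cannot make $L\overline{D}$ mimic the positive quantity $\abs{L}^{2k}$ unless $k=1$.

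What works is the two-sided Heap--Soundararajan twist $\int L(1/2+it,f)\mathcal{N}(1/2+it,k-1)\overline{\mathcal{N}(1/2+it,k)}\,dt$. Its diagonal has local factors $1+k^2\abs{f(p)}^2/p$, and $\abs{\mathcal{N}_j(k-1)\mathcal{N}_j(k)}^{2k/(2k-1)}\approx\abs{\mathcal{N}_j(k)}^2$. With that correction, your contour-shift evaluation of the twisted first moment is supported by the analytic hypotheses. That would give a legitimate alternative proof for $k>1/2$, but that is the easy range, and the approach does not extend below $1/2$.
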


We deduce the following corollaries from \Cref{thm:multiplicativethm}. 
	
		\begin{cor}\label{cor:dedekindcor}
		Let $K$ be a number field of degree $d$, and let $G = \operatorname{Gal}(L/\mathbb{Q})$ be the Galois group of its normal closure $L$ and $H=\operatorname{Gal}(L/K)$. Then, for all real $k\ge 0$ and large $T>0$, we have
		$$
		\int_T^{2T}\abs{\zeta_K(1/2+it)}^{2k}\,dt\gg_{K,k} T(\log T)^{M_K k^2},
		$$
		where $M_K = \# H \backslash G / H$. In particular, if $K/\mathbb{Q}$ is a Galois extension, then $M_K = d$.
	\end{cor}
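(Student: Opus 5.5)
The plan is to deduce the corollary from \Cref{thm:multiplicativethm} applied to $f$ the multiplicative function with $\zeta_K(s)=\sum_n f(n)n^{-s}$, where $f(n)$ counts ideals of norm $n$. Most hypotheses are classical. $\zeta_K$ continues meromorphically to $\mathbb{C}$ with only a simple pole at $s=1$. By the functional equation and Phragmén--Lindelöf it has polynomial growth, $\zeta_K(\sigma+it)\ll \abs{t}^{A}$ uniformly for $\sigma\ge1/2$; for $\sigma\ge 2$ it is simply bounded.

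For the coefficient hypotheses, write the Euler factor at $p$ as $\prod_{\mathfrak p\mid p}(1-N\mathfrak p^{-s})^{-1}$. There are at most $d$ prime ideals above $p$, each of norm $p^{f_i}$. Then
\[
\zeta_K(s)^{1/r}=\prod_p\prod_{\mathfrak p\mid p}(1-N\mathfrak p^{-s})^{-1/r},
\]
and $(1-x)^{-1/r}$ has coefficients in $[0,1]$. So $f_{1/r}(p^m)$ is a sum of at most $\binom{m+d-1}{d-1}\ll_d (m+1)^{d}$ terms, each of absolute value at most $1$. Hence $f_{1/r}(n)\le d_d(n)\ll_\varepsilon n^\varepsilon$, which is certainly $\ll n^\theta$ for any fixed $\theta\in(0,1/2)$. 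The bound $f_{1/r}(p^m)\ll (m+1)^d$ also gives
\[
\sum_p\sum_{m\ge2}\frac{\abs{f_{1/r}(p^m)}^2}{p^m}\ll\sum_p\sum_{m\ge2}\frac{(m+1)^{2d}}{p^m}<\infty .
\]

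The main step is the asymptotic for $\sum_{p\le x}\abs{f(p)}^2/p$. Up to the finitely many ramified primes, $f(p)$ equals the number of degree-one primes of $K$ above $p$. By the Artin formalism this is the number of fixed points of $\mathrm{Frob}_p$ acting on the coset space $G/H$, i.e. $f(p)=\chi(\mathrm{Frob}_p)$ with $\chi=\mathrm{Ind}_H^G 1$ the permutation character. So $f(p)^2=(\chi\bar\chi)(\mathrm{Frob}_p)$. The Chebotarev density theorem, in its Mertens-type form with $O(1)$ error (equivalently, the non-vanishing and holomorphy of Artin $L$-functions $L(s,\rho)$ on $\sigma=1$ for the irreducible constituents $\rho$, which follows from Brauer induction), gives
\[
\sum_{p\le x}\frac{f(p)^2}{p}=\langle\chi,\chi\rangle_G\log\log x+O(1).
\]
By Mackey's formula (or Burnside's lemma for the orbits of $H$ on $G/H$), $\langle\chi,\chi\rangle=\#H\backslash G/H=M_K$. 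Thus $N_f=M_K$, and \Cref{thm:multiplicativethm} yields the bound.

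If $K/\mathbb{Q}$ is Galois then $H$ is trivial, so $M_K=\#G=d$. Directly, $f(p)=d$ for split primes, which have density $1/d$, and $0$ otherwise among unramified primes; this gives $\sum_{p\le x} d^2/(dp)\sim d\log\log x$.

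I expect the only genuine subtlety to be the $O(1)$ error in the Chebotarev sum. The plan for it is to expand $\chi\bar\chi=\sum_\rho m_\rho\rho$ and use that $\sum_{p\le x}\rho(\mathrm{Frob}_p)/p=O(1)$ for nontrivial irreducible $\rho$. This follows from $\log L(s,\rho)$ being bounded as $s\to1^+$, after the standard Tauberian-free argument via the prime ideal theorem for the Galois closure.
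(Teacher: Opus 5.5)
Your proposal is correct and follows the paper's proof essentially step for step: apply \Cref{thm:multiplicativethm} to the ideal-counting function, bound $f_{1/r}$ by a divisor function through the Euler product, identify $f(p)=\chi_{G/H}(\operatorname{Frob}_p)$ for unramified $p$, and use Chebotarev to get $N_f=\langle\chi,\chi\rangle=M_K$. Your additions fill in points the paper leaves implicit: the Mackey/Burnside justification of $\langle\chi,\chi\rangle=\#H\backslash G/H$, and the remark that Chebotarev is needed in a Mertens-type form with $O(1)$ error. That form does follow from $L(s,\rho)$ being holomorphic and non-zero at $s=1$ combined with the shift to $\sigma_0=1+1/\log x$, as in the proof of \Cref{cor:linearcombcor}; only Mertens over the rational primes is used in that shift, not a prime ideal theorem for the Galois closure.
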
 
	
	\Cref{cor:dedekindcor} extends the result in \cite{sono} to include $0\le k\le 1$ and non-normal number fields. 
	
	\begin{cor}\label{cor:picor}
		Let $\pi$ be an irreducible unitary cuspidal representation of $\operatorname{GL}_d(\mathbb{A}_\mathbb{Q})$ and $L(s,\pi)$ the automorphic $L$-function attached to $\pi$. Let $\alpha_{\pi,j}(p)\in\mathbb{C}$ $(1\le j\le d)$ be the local parameters at a prime $p$ and define for all integers $\mu\ge 0$ and primes $p$,
		$$
		a_\pi(p^{\mu})=\sum_{j=1}^{d}\alpha_{\pi,j}(p)^{\mu}.
		$$
		 If 
		$$
		\sum_p\sum_{m=2}^{\infty}\frac{\abs{a_{\pi}(p^m)\log p}^2}{p^m} < \infty,
		$$
		then for all real $k\ge 0$ and large $T>0$, we have
		$$
		\int_T^{2T}\abs{L(1/2+it,\pi)}^{2k}\,dt\gg_{\pi,k} T(\log T)^{k^2}.
		$$
		In particular, if $d\le 4$ then the result holds unconditionally for all real $k\ge 0$.
	\end{cor}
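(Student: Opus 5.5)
We deduce \Cref{cor:picor} from \Cref{thm:multiplicativethm} applied to $f(n)=a_\pi(n)$, the Dirichlet coefficients of $L(s,\pi)$. These coefficients are multiplicative, with
$$
\log L(s,\pi)=\sum_p\sum_{\mu\ge1}\frac{a_\pi(p^\mu)}{\mu p^{\mu s}}.
$$
We have to check each hypothesis of the theorem and show that $N_f=1$.

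\textbf{Analytic hypotheses.} These are standard. By Godement--Jacquet, $L(s,\pi)$ is entire unless $\pi$ is the trivial character of $\operatorname{GL}_1$, in which case $L(s,\pi)=\zeta(s)$ and the only pole is at $s=1$. Polynomial growth in vertical strips for $\sigma\ge1/2$ follows from the functional equation and Phragmén--Lindelöf; on $\sigma>1+\delta$ the series is bounded absolutely.

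\textbf{The constant $N_f$.} Rankin--Selberg theory (Jacquet--Piatetski-Shapiro--Shalika, Shahidi) shows that $L(s,\pi\times\tilde\pi)$ has a simple pole at $s=1$ and is nonvanishing on $\sigma=1$. The standard prime number theorem argument then gives
$$
\sum_{p\le x}\frac{|a_\pi(p)|^2\log p}{p}=\log x+O(1),
$$
and, if needed, the error can be kept under control using the hypothesis on prime powers. Partial summation converts this into
$$
\sum_{p\le x}\frac{|a_\pi(p)|^2}{p}=\log\log x+O(1),
$$
so $N_f=1$.

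\textbf{Coefficient hypotheses for $f_{1/r}$.} We have
$$
L(s,\pi)^{1/r}=\exp\Bigl(\frac1r\sum_{p,\mu}\frac{a_\pi(p^\mu)}{\mu p^{\mu s}}\Bigr),
$$
so $f_{1/r}$ is multiplicative with
$$
f_{1/r}(p^m)=\sum_{\lambda\vdash m}\prod_j \frac{1}{r}\frac{a_\pi(p^{\lambda_j})}{\lambda_j}\cdot(\text{combinatorial weights}).
$$
Equivalently, $f_{1/r}(p^m)$ is a polynomial in the local parameters $\alpha_{\pi,j}(p)$, namely the coefficient of $p^{-ms}$ in $\prod_j(1-\alpha_{\pi,j}p^{-s})^{-1/r}$. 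Its absolute value is at most $\binom{m+d/r-1}{m}\max_j|\alpha_{\pi,j}(p)|^m$.

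The Luo--Rudnick--Sarnak bound $|\alpha_{\pi,j}(p)|\le p^{1/2-1/(d^2+1)}$ gives $f_{1/r}(n)\ll_\varepsilon n^{1/2-1/(d^2+1)+\varepsilon}$, using the divisor-type bound $\prod_{p^m\|n}\binom{m+d/r-1}{m}\ll n^\varepsilon$. So the first estimate holds with $\theta=1/2-1/(d^2+1)+\varepsilon<1/2$.

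The second estimate, summability of $|f_{1/r}(p^m)|^2/p^m$ over $m\ge2$, is the main obstacle. The given hypothesis controls only the power sums $a_\pi(p^m)$, not $f_{1/r}(p^m)$ directly. The plan is to use Newton-type identities: $f_{1/r}(p^m)$ is a sum over partitions $\lambda$ of $m$ of $c_\lambda\prod_j a_\pi(p^{\lambda_j})$. The term with a single part is $a_\pi(p^m)/r$, which the hypothesis controls; the hypothesis is even slightly stronger, since it carries a factor $\log p$. The remaining terms have at least two parts. Bound them by Cauchy--Schwarz and the elementary inequality $|a_\pi(p^\mu)|\le d\max_j|\alpha_j|^\mu$. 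Writing $\beta_p=\max_j|\alpha_{\pi,j}(p)|\le p^{1/2-\delta}$, the cross terms are bounded by $C_m\beta_p^{2m}/p^m$ summed over $p$, with $m$ weighted by the partition structure. To make this converge I would handle $m$ large and $m$ small separately:

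\begin{itemize}
\item For large $m$, the decay $(\beta_p^2/p)^m\le p^{-2\delta m}$ beats the growth $C_m\ll(1+\epsilon)^m$.
\item For small $m$, each multi-part product involves only exponents $\lambda_j<m$. These can be bounded via $|a(p^{\lambda_1})\cdots|^2/p^m\le\sum_j|a(p^{\lambda_j})|^2\beta_p^{2(m-\lambda_j)}/p^m$. The terms with $\lambda_j\ge2$ are handled by the hypothesis. The terms with $\lambda_j=1$ reduce to $|a_\pi(p)|^2\beta_p^{2(m-1)}/p^m$, whose sum over $p$ is finite for $m\ge2$ by the Rankin--Selberg bound $\sum_p|a_\pi(p)|^2p^{-1-\eta}<\infty$ combined with $\beta_p^2\le p^{1-2\delta}$.
\end{itemize}

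\textbf{Unconditional case $d\le4$.} The hypothesis on $\sum_p\sum_{m\ge2}|a_\pi(p^m)\log p|^2/p^m$ holds unconditionally: for $d\le4$ this is a known result of Rudnick--Sarnak for $d\le3$ and of Kim for $d=4$, via the bounds toward Ramanujan (Kim--Sarnak for $d=2$) together with Rankin--Selberg for the symmetric and exterior square. With all hypotheses verified, \Cref{thm:multiplicativethm} yields the bound with $N_f k^2=k^2$.
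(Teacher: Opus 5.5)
Your proposal is correct and has the same overall structure as the paper's proof. You apply \Cref{thm:multiplicativethm} to the Dirichlet coefficients of $L(s,\pi)$ and get $f_{1/r}(n)\ll n^\theta$ from the generalized binomial bound together with $\abs{\alpha_{\pi,j}(p)}\le p^{\gamma}$, $\gamma<1/2$. You get $N_f=1$ from Selberg orthogonality under the prime-power hypothesis, and you quote Rudnick--Sarnak and Kim for $d\le 4$.

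The genuine difference is how you prove $\sum_p\sum_{m\ge2}\abs{f_{1/r}(p^m)}^2p^{-m}<\infty$. The paper expands $f_{1/r}(p^m)=\sum_{\mathbf v}w_{\mathbf v}\prod_\mu\bigl(a_\pi(p^\mu)/r\bigr)^{v_\mu}$ with cycle-index weights $w_{\mathbf v}=\prod_\mu(\mu^{v_\mu}v_\mu!)^{-1}$, which sum to $1$ over partitions of $m$. A weighted Cauchy--Schwarz then resums all $m$ at once into $\exp(r^{-2}X_p)-1-r^{-2}\abs{a_\pi(p)}^2/p$, where $X_p=\sum_{\mu\ge1}\abs{a_\pi(p^\mu)}^2/(\mu p^\mu)$. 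This is $\ll\abs{a_\pi(p)}^4p^{-2}+\sum_{\mu\ge2}\abs{a_\pi(p^\mu)}^2/(\mu p^\mu)$.

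You instead cut at $m\approx(d^2+1)/2$. For large $m$ you use the trivial bound $\binom{m+d/r-1}{m}\beta_p^m$. For small $m$ you go partition by partition, keeping one factor and bounding the others by $d\beta_p^{\lambda_j}$. This is cruder but valid, and it makes explicit that only finitely many $m$ need the hypothesis, so the fixed-$m$ Hypothesis H suffices given the Luo--Rudnick--Sarnak bound. The paper's identity is cleaner and uniform in $m$. Both arguments rest on the same three inputs: the hypothesis, $\gamma<1/2$, and $\sum_p\abs{a_\pi(p)}^2p^{-\sigma}<\infty$ for $\sigma>1$.

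Two minor remarks. First, the Dirichlet coefficients of $L(s,\pi)$ at $p^m$ are the complete symmetric polynomials in the $\alpha_{\pi,j}(p)$, not the power sums $a_\pi(p^m)$. They agree at $m=1$, and you compute $f_{1/r}$ correctly from $\log L$, so nothing breaks.

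Second, a simple pole plus nonvanishing of $L(s,\pi\times\tilde\pi)$ on $\Re s=1$ does not by itself give the $O(1)$ error: Wiener--Ikehara yields only an asymptotic. You need either a zero-free region, or the direct route at $\sigma=1+1/\log x$. In that route you use $\log L(\sigma,\pi\times\tilde\pi)=\log\frac{1}{\sigma-1}+O(1)$, discard prime powers uniformly using the hypothesis, and compare with $\sum_{p\le x}\abs{a_\pi(p)}^2/p$ via the Chebyshev bound that Wiener--Ikehara does provide. Only this $\log\log x+O(1)$ form is needed, and the paper simply cites Selberg orthogonality for this step.
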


We postpone the proofs of the corollaries to \Cref{sec:corsec}. Throughout this article, the implicit constants in the $\ll, \gg, \asymp,$ and $O(\cdot)$ notations may depend on the multiplicative function $f$ (on the multiplicative functions $f_1, \ldots, f_d$ and the coefficients $b_1, \ldots, b_d$ in \Cref{sec:linearcombination}) and the real number $k$. Any dependence on further parameters will be explicitly indicated using subscripts (e.g., $\ll_\varepsilon$).

	\section{SETUP AND PLAN OF PROOF}
	
	Let $\log_j$ denote the $j$-fold iterated logarithm and $\ell$ denote the largest integer such that $\log_{\ell}T\ge C_f$ where $C_f>0$ is a sufficiently large constant depending only on $f$. Define an increasing sequence $T_j$ as follows. Put $T_1=(k+1)^4C_f$, and for $2\le j\le \ell$ put 
	$$
	T_j=\exp\left(\frac{\log T}{(k+1)^2(\log_j T)^2}\right).
	$$
	For $2\le j\le \ell$, set 
	$$
	\mathcal{P}_j(s)=\sum_{T_{j-1}< p\le T_j}\frac{f(p)}{p^s}
	\qquad \text{and} \qquad 
	P_j=\sum_{T_{j-1}< p\le T_j}\frac{\abs{f(p)}^2}{p}.
	$$
	For each $2 \le j \le \ell$, put $K_j=500 (k+1)^2P_j$. Let 
	$$
	\mathcal{N}_j
	~=~
	\{n\in\mathbb{N} : p \mid n \implies p\in (T_{j-1},T_j]\text{ and } \Omega(n)\le K_j\}.
	$$
	Let 
	$$
	\mathcal{N}
	~=~
	\{n=n_2n_3\cdots n_{\ell}\in\mathbb{N} : n_j\in \mathcal{N}_j\};
	$$ 
	clearly we have for $n\in\mathcal{N}$ that
	$$
	n=n_2\cdots n_{\ell}\le T_2^{K_2}T_3^{K_3}\cdots T_{\ell}^{K_{\ell}}\le T^{1/9}.
	$$
	Let $g$ denote the multiplicative function given on prime powers by 
	$\displaystyle{g(p^m)=\frac{f(p)^m}{m!}}$. 
	For any real number $\alpha$ and $2\le j\le \ell$ define 
	$$
	\mathcal{N}_j(s,\alpha)
	~=~
	\sum_{0\le m\le K_j}\frac{1}{m!}(\alpha \mathcal{P}_j(s))^{m}
	~=~
	\sum_{\substack{p \mid n \implies T_{j-1} < p \le T_j\\
			\Omega(n)\le K_j}}\frac{\alpha^{\Omega(n)}g(n)}{n^s},
	$$
	and put 
	$$
	\mathcal{N}(s,\alpha)=\prod_{j=2}^\ell \mathcal{N}_j(s,\alpha)
	~=~
	\sum_{n\in\mathcal{N}}\frac{\alpha^{\Omega(n)}g(n)}{n^s}.
	$$
	For $k=0$, the lower bound trivially holds. Thus, we may assume $k>0$. Let $r>1$ be the smallest integer such that $k>2/r$. Our refinement of the method of Heap and Soundararajan \cite{HeapSoundararajan2022} is based upon a consideration of the quantity 
	$$
	\mathcal{I}(T)=\int_{T}^{2T}\abs{L(1/2+it,f)}^{2/r}\abs{\mathcal{N}(1/2+it,k-1/r)}^2\,dt.
	$$
	We have by H\"older's inequality that 
	$$
	\mathcal{I}(T)\le\left( \int_T^{2T}\abs{L(1/2+it,f)}^{2k}\,dt\right)^{\frac{1}{rk}}\left(\int_T^{2T}\abs{\mathcal{N}\left(1/2+it,k-1/r\right)}^{\frac{2rk}{rk-1}}\,dt\right)^{1-\frac{1}{rk}}.
	$$
	\Cref{thm:multiplicativethm} then follows immediately from the two lemmas below.
	
	\begin{lem}\label{lem:multiNlem}
		Let $T>0$ be large. We have
		$$
		\int_T^{2T}\abs{\mathcal{N}\left(1/2+it,k-1/r\right)}^{\frac{2rk}{rk-1}}\,dt\ll T(\log T)^{N_fk^2}.
		$$
	\end{lem}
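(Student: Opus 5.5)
Write $q=\frac{2rk}{rk-1}$ and $\alpha=k-1/r$. The exponent $q$ is not an even integer in general, so the integral cannot be expanded directly. We work one block at a time, following Heap and Soundararajan. Since $m\le K_j$ and the truncation is long compared with $|\alpha\mathcal{P}_j|$ typically, we compare $\mathcal{N}_j(s,\alpha)$ with $\exp(\alpha\mathcal{P}_j(s))$.

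\textbf{Step 1 (pointwise bound).} For each $2\le j\le \ell$ we split $t\in[T,2T]$ according to whether $|\mathcal{P}_j(1/2+it)|\le K_j/e^2$. When this holds, Taylor's theorem gives $\mathcal{N}_j(s,\alpha)=\exp(\alpha\mathcal{P}_j(s))(1+O(e^{-K_j}))$. Hence
\[
|\mathcal{N}_j(s,\alpha)|^{q}\le (1+O(e^{-K_j}))\exp\big(q\alpha\,\mathrm{Re}\,\mathcal{P}_j(s)\big).
\]
When it fails, we use the crude bound $|\mathcal{N}_j(s,\alpha)|\le \sum_{m\le K_j}|\alpha\mathcal{P}_j|^m/m!\ll (1+|\mathcal P_j|)^{K_j}$. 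We pair this with the large factor $(e^2|\mathcal{P}_j|/K_j)^{2M}$ for a suitable even moment $2M\asymp K_j$ to make the failing set negligible.

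\textbf{Step 2 (good set).} Let $\mathcal{T}$ be the set of $t$ where every block is good. On $\mathcal{T}$ we get $|\mathcal{N}|^q\ll \prod_j\exp(q\alpha\,\mathrm{Re}\,\mathcal{P}_j)$. We bound each exponential by its truncated Taylor series: for real $x$ with $|x|\le qK_j/e^2$, $e^{x}\le(1+O(e^{-K_j}))\sum_{m\le 10 qK_j}x^m/m!$. This makes the integrand a Dirichlet polynomial of length at most $T^{1/2}$, since the choice of $T_j$ leaves room for exponents about $10q K_j$. Mean values of Dirichlet polynomials (Montgomery--Vaughan) then give
\[
\int_T^{2T}\prod_j \big|\text{Taylor}_j\big|\,dt=T\prod_j \mathbb{E}\big[\exp(q\alpha\,\mathrm{Re}\,\mathcal{P}_j(X))\big]\,(1+o(1)),
\]
where $X$ is a random model with independent uniform $p^{-it}$. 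Using the independence across primes, we evaluate each factor as $\exp\big(\tfrac{(q\alpha)^2}{4}P_j+O(\sum_p|f(p)|^3p^{-3/2})\big)$. The error sum converges because $f(p)\ll p^\theta$ with $\theta<1/2$.

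\textbf{Step 3 (collecting exponents).} The total exponent is $\frac{q^2\alpha^2}{4}\sum_j P_j$. We have $q\alpha=\frac{2rk}{rk-1}\cdot\frac{rk-1}{r}=2k$, so the exponent equals $k^2\sum_j P_j$. The hypothesis $\sum_{p\le x}|f(p)|^2/p=N_f\log\log x+O(1)$ gives $\sum_j P_j\le N_f\log\log T+O(1)$. The good-set contribution is therefore $\ll T(\log T)^{N_fk^2}$.

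\textbf{Step 4 (bad sets).} For the $t$ where block $j$ fails first, blocks $i<j$ are good and are handled as in Step 2. Block $j$ is bounded by Step 1's crude estimate times $(e^2|\mathcal{P}_j|/K_j)^{2M}$. Blocks $i>j$ are bounded by the Hölder/Taylor device, or simply by $|\mathcal{N}_i|^q\le$ a Dirichlet polynomial, again arranged so that the total length is $\le T$. The mean-value computation gives the moment bound $\int|\mathcal P_j|^{2M}\ll T\,M!\,P_j^M$. With $K_j=500(k+1)^2P_j$, this yields a saving of $e^{-cK_j}$ for block $j$. Because $T_{j}$ is defined through $(\log_jT)^{-2}$, we have $P_j\asymp\log_{j-1}T-\log_jT$ roughly, so $\sum_j e^{-cK_j}$ converges, helped by $T_1$ being large. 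The total is again $\ll T(\log T)^{N_fk^2}$.

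\textbf{Main obstacle.} The delicate step is Step 4: the bookkeeping of Dirichlet-polynomial lengths and the choice of $M$. The saving $e^{-cK_j}$ must beat both the loss from the crude bound on $\mathcal{N}_j$, which is of size $e^{O(K_j)}$ with a smaller constant, and the extra factor from neighbouring blocks. This is exactly where the constant $500(k+1)^2$ and the exponent $(k+1)^2(\log_jT)^2$ in $T_j$ are used.
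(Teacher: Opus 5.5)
Your skeleton is the right one: blockwise comparison of $\mathcal{N}_j(s,\alpha)$ with $\exp(\alpha\mathcal{P}_j)$, a good/bad split, moments of $\mathcal{P}_j$ on the bad set, independence of blocks, and $q\alpha=2k$. But Step 4 fails as written.

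The crude bound $|\mathcal{N}_j(s,\alpha)|\ll(1+|\mathcal{P}_j|)^{K_j}$ discards the factorials. On the bad set, where $|\mathcal{P}_j|\gg K_j$, this costs a factor of about $K_j^{K_j}$, i.e.\ $e^{qK_j\log K_j}$ after raising to the power $q$. It is not $e^{O(K_j)}$ as you assert. The factor $(e^2|\mathcal{P}_j|/K_j)^{2M}$ cannot compensate. Insert $\int_T^{2T}|\mathcal{P}_j|^{2L}\,dt\ll T\,L!\,P_j^{L}$ with $2L\approx qK_j+2M$ and $P_j=K_j/(500(k+1)^2)$. Then $L!\,P_j^L$ carries $K_j^{2L}$, which exceeds your $K_j^{-2M}$ by exactly $K_j^{qK_j}$, while optimizing over $M$ saves at most $e^{-O((k+1)^2K_j)}$. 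So the best bad-set bound you can extract is of size $T\exp\bigl(K_j(q\log K_j-O((k+1)^2))\bigr)$. For $j=2$ one has $K_2\asymp(k+1)^2N_f\log_2T$, so $\log K_2=\log_3T+O(1)\to\infty$, and this swamps $T(\log T)^{N_fk^2}$.

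The missing step is to keep the $1/m!$. If $|\mathcal{P}_j|\ge K_j/(10(k+1))$, then $\sum_{m\le K_j}(\alpha|\mathcal{P}_j|)^m/m!\le(12(k+1)|\mathcal{P}_j|/K_j)^{K_j}$, so $|\mathcal{N}_j|^q\le\mathcal{Q}_j(t)$. The $2K_j$-th moment of $\mathcal{P}_j$ alone then gives $\int_T^{2T}\mathcal{Q}_j(t)\,dt\ll Te^{-K_j}$ (Lemma~\ref{lem:Qlem}), with no auxiliary factor. Relatedly, your threshold $K_j/e^2$ must carry a factor $1/(k+1)$. Otherwise, for large $k$, you can have $|\alpha\mathcal{P}_j|>K_j$ on your good set, and truncation at $K_j$ terms no longer approximates $\exp(\alpha\mathcal{P}_j)$.

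The second gap is your treatment of blocks $i>j$. There nothing is known about $|\mathcal{P}_i|$, so you need a majorant of $|\mathcal{N}_i|^q$ valid for every $t$ with a computable mean value. Since $q\notin2\mathbb{Z}$, ``$|\mathcal{N}_i|^q\le$ a Dirichlet polynomial'' is not available as stated.

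The paper provides exactly such a majorant and so avoids the first-failing-block decomposition altogether. Lemma~\ref{lem:Npowerlem} gives, for every $t$, $|\mathcal{N}_j(\tfrac12+it,\alpha)|^q\le|\mathcal{N}_j(\tfrac12+it,k)|^2(1+5e^{-K_j})+\mathcal{Q}_j(t)$. The first term is your own identity $q\alpha=2k$, read as $|\exp(\alpha\mathcal{P}_j)|^q=|\exp(k\mathcal{P}_j)|^2\approx|\mathcal{N}_j(s,k)|^2$. Multiplying over $j$ bounds the integrand pointwise by sums of products of squares of short Dirichlet polynomials on disjoint prime ranges. The mean value theorem then factorizes over blocks, and no real-variable Taylor majorant or random model is needed.

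If you do keep your Step 2 device, the Taylor polynomials must have even degree. Then they are positive on $\mathbb{R}$, and the good-set bound can be extended to all of $[T,2T]$; $\prod_j|\mathrm{Taylor}_j|$ as written is not a Dirichlet polynomial.
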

	\begin{lem}\label{lem:multimainlem}
		Let $T>0$ be large. We have
		$$
		\mathcal{I}(T)\gg T(\log T)^{N_fk^2}.
		$$
	\end{lem}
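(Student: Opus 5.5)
The plan is to bound $\mathcal{I}(T)$ from below by Cauchy--Schwarz against an auxiliary short Dirichlet polynomial. This reduces the problem to a \emph{first} twisted moment of $L(s,f)^{1/r}$, which can be computed essentially by its Dirichlet series, so no twisted second moment is needed. One step, making sense of $L(s,f)^{1/r}$ on the critical line, is not settled by this plan and is flagged below.

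Write $L_r(s)=L(s,f)^{1/r}=\sum_n f_{1/r}(n)n^{-s}$ on $\sigma>1$, and take $M(s)=\mathcal{N}(s,k-1/r)\mathcal{N}(s,1/r)$. This is a Dirichlet polynomial of length at most $T^{2/9}$. Since $|L_r|^2=|L(1/2+it,f)|^{2/r}$ wherever $L_r$ is defined, Cauchy--Schwarz gives
$$
\Bigl|\int_T^{2T} L_r(1/2+it)\,\mathcal{N}(1/2+it,k-1/r)\,\overline{M(1/2+it)}\,dt\Bigr|^2\le \mathcal{I}(T)\int_T^{2T}|M(1/2+it)|^2\,dt .
$$
It therefore suffices to show two things: first, that the left-hand integral is $\gg T(\log T)^{N_fk^2}$; second, that $\int_T^{2T}|M|^2\ll T(\log T)^{N_fk^2}$.

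The second bound is routine. By the mean value theorem for Dirichlet polynomials (the length is $\le T^{2/9}\ll T$), it is $\ll T\sum_n |c_M(n)|^2/n$. Because the blocks are supported on disjoint prime ranges, this sum factors over $j$. On each block one drops the constraint $\Omega\le K_j$, up to errors handled exactly as in \Cref{lem:multiNlem}. The $j$-th factor is then $\ll\exp\bigl((k-1/r+1/r)^2P_j\bigr)=\exp(k^2P_j)$, and the product is $\exp(k^2\sum_jP_j)\asymp(\log T)^{N_fk^2}$ by the hypothesis on $\sum_{p\le x}|f(p)|^2/p$.

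For the first twisted moment, expand $\mathcal{N}(s,k-1/r)\overline{M(s)}$ as $\sum_{a,b\le T^{1/3}}\alpha_a\overline{\beta_b}\,a^{-s}b^{-\bar s}$. For each pair $(a,b)$, evaluate $\int L_r(s)(b/a)^{it}\,dt$ by moving the line of integration to $\sigma=1+\varepsilon$, where $L_r$ is an absolutely convergent Dirichlet series. There the integral equals $T\,f_{1/r}(n)$ when $n=b/a$ is an integer, up to an error. The resulting diagonal main term is
$$
T\sum_{a\mid b}\frac{\alpha_a\overline{\beta_b}f_{1/r}(b/a)}{b}.
$$
On the primes in $(T_1,T_\ell]$ we have $f_{1/r}(p)=f(p)/r$, and the $\alpha,\beta$ are built from $g$. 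Hence this sum is again essentially multiplicative over the blocks. Each block contributes $\approx\exp\bigl(((k-1/r)k+(1/r)k)P_j\bigr)=\exp(k^2P_j)$, after removing the truncations $\Omega\le K_j$ by Rankin's trick with the choice $K_j=500(k+1)^2P_j$. Higher prime powers are harmless, by the hypothesis $\sum_p\sum_{m\ge2}|f_{1/r}(p^m)|^2/p^m<\infty$ together with $f_{1/r}(n)\ll n^\theta$. Small primes $\le T_1$ contribute only a constant factor. This gives $\gg T(\log T)^{N_fk^2}$, and combining with the previous paragraph proves the lemma.

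\textbf{Main obstacle.} The difficulty is justifying the contour shift for $L_r$, which has branch points at zeros of $L(s,f)$ with $\sigma\ge 1/2$. The plan does not resolve this; it only proposes two routes, and the lemma's proof depends on one of them working. The first thing to try is to keep $L(s,f)$ itself in the contour argument, since it is meromorphic with only a pole at $s=1$ and has polynomial growth, and to reintroduce the $1/r$-th power through a factorisation $|L|^{2/r}\,|\mathcal{N}|^2=|L^{1/r}\mathcal{N}|^2$ valid pointwise. If that fails, the fallback is to replace $L_r$ on $\sigma=1/2$ by a branch defined on the complement of horizontal cuts to the left of the zeros. One then uses zero-counting in short boxes (available from the growth bound $L=O(|t|^A)$ via Jensen's formula) to show that the cuts contribute negligibly against the short polynomials $\mathcal{N},M$. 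The off-diagonal terms and the horizontal segments are bounded using $L=O(|t|^A)$, the length bound $T^{1/3}$, and convexity, in the standard way.
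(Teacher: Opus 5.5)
Your Cauchy--Schwarz reduction is valid for any measurable choice of $L_r$ with $|L_r|=|L(1/2+it,f)|^{1/r}$. The bound $\int_T^{2T}|M(1/2+it)|^2\,dt\ll T(\log T)^{N_fk^2}$ is routine, and your diagonal bookkeeping (each block contributing $\exp(k^2P_j)$) is right. But the step you flag is the whole difficulty, not a technicality, and as written the proof of the lower bound for the first moment is missing.

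Evaluating $\int_T^{2T}L_r\,\mathcal{N}(\cdot,k-1/r)\,\overline{M}\,dt$ by moving to $\sigma=1+\varepsilon$ needs a holomorphic branch of $L(s,f)^{1/r}$ on $1/2\le\sigma\le 1+\varepsilon$. That means no zeros of $L(s,f)$ to the right of the critical line in the relevant range. This is a GRH-type input, and it is exactly why the classical lower bounds for irrational $k$ were conditional on RH.

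\begin{itemize}
\item Your first route does not touch this. The twisted first moment is linear in $L_r$, so the pointwise identity $|L|^{2/r}|\mathcal{N}|^2=|L^{1/r}\mathcal{N}|^2$ says nothing about the phase of $L_r$. A contour argument for $L$ itself computes $\int L\,\mathcal{N}\overline{M}$, not $\int L^{1/r}\mathcal{N}\overline{M}$.
\item Your second route needs a bound for the cut integrals, summed over all zeros $\rho=\beta+i\gamma$ with $\beta>1/2$, $T\le\gamma\le 2T$. Each cut integrand is the jump of $L^{1/r}$ times $\mathcal{N}$ times the analytic continuation of $\overline{M}$. On the cuts that continuation is evaluated at real part $1-\sigma<1/2$, where it is larger. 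You would need the total to be $o(T(\log T)^{N_fk^2})$. The hypotheses give no zero-density estimate, so a priori a positive proportion of the $O(T\log T)$ zeros in this range may lie off the line. Jensen's formula only counts them. Your short polynomials are controlled in mean square over $t$, not pointwise at the ordinates of zeros.
\item Separately, with $A$ arbitrary in $L=O(|t|^A)$ and sharp cut-offs at $T,2T$, the horizontal segments are not automatically negligible either; you would need smooth weights.
\end{itemize}

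The idea you are missing is Heath-Brown's convexity argument, which is how the paper avoids any branch of $L^{1/r}$. It replaces $L^{1/r}$ by the Dirichlet polynomial $S(s)=\sum_{n\le T^{1/2}}f_{1/r}(n)n^{-s}$. For this, $\mathcal{L}(\sigma)\asymp T(\sigma-1/2)^{-N_fk^2}$ follows from the mean value theorem alone. It then controls $E=L-S^r$ through Gabriel's theorem for $\int|F|^q$ with $q=2/r$, applied to the holomorphic functions $(z-1)^mL(z,f)\mathcal{N}(z,k-1/r)^re^{(z-i\tau)^2}$ and $E(z)\mathcal{N}(z,k-1/r)^re^{(z-i\tau)^2}$. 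Since $|\mathcal{N}^r|^{2/r}=|\mathcal{N}|^2$ and Gabriel's theorem holds for every $q>0$, only holomorphy of $L$ is ever used. Because $E=\sum_{n>N}a_nn^{-s}$ is small for $\Re s>1$, convexity gives $\mathcal{K}(\sigma)\ll\mathcal{K}(1/2)\,T^{-c'(2\sigma-1)}$ once $\mathcal{K}(1/2)\ge T$, with $\mathcal{K}(1/2)\ll\mathcal{J}(1/2)+\mathcal{L}(1/2)$. Comparing $\mathcal{L}\ll\mathcal{J}+\mathcal{K}$ at $\sigma=1/2+c/\log T$ with $c$ large then forces $\mathcal{J}(1/2)\gg T(\log T)^{N_fk^2}$. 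Once $S$ is available, your Cauchy--Schwarz detour through $M$ is unnecessary.
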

	
	\section[Proof of Lemma 2]{PROOF OF LEMMA \ref{lem:multiNlem}}
	To prove \Cref{lem:multiNlem} we follow the arguments of Heap and Soundararajan \cite[Prop.~3]{HeapSoundararajan2022}. We need the following two intermediary lemmas.
	
	\begin{lem}\label{lem:Npowerlem}
		For $2\le j \le \ell$
		$$
		\abs{\mathcal{N}_j(1/2+it,k-1/r)}^{\frac{2rk}{rk-1}}
		\le
		\abs{\mathcal{N}_j(1/2+it,k)}^2(1+5e^{-K_j})+\mathcal{Q}_j(t)
		$$
		where 
		$$
		\mathcal{Q}_j(t)=\left(\frac{12(k+1)\abs{\mathcal{P}_j(1/2+it)}}{K_j}\right)^{4K_j}.
		$$
	\end{lem}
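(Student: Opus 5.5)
We split according to the size of $\abs{\mathcal{P}_j(1/2+it)}$ relative to $K_j$, following Heap and Soundararajan. Write $z=\mathcal{P}_j(1/2+it)$, $\alpha=k-1/r$, and $q=\frac{2rk}{rk-1}=\frac{2k}{\alpha}$. The key identity is that $\mathcal{N}_j(s,\alpha)$ is the truncation at order $K_j$ of the Taylor series of $\exp(\alpha z)$. Note that $q\alpha=2k$, so $\abs{\exp(\alpha z)}^{q}=\abs{\exp(kz)}^2$.

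\textbf{Case 1: $\abs{z}\le K_j/(12(k+1))$.} We bound the tail of the exponential series. For $m> K_j$ we have
$$
\frac{\abs{\alpha z}^m}{m!}\le \Bigl(\frac{e\abs{\alpha z}}{m}\Bigr)^m\le \Bigl(\frac{e}{12}\Bigr)^{m},
$$
so
$$
\mathcal{N}_j(s,\alpha)=e^{\alpha z}+O\bigl((e/12)^{K_j}\bigr)
\qquad\text{and}\qquad
\mathcal{N}_j(s,k)=e^{kz}+O\bigl((e/12)^{K_j}\bigr).
$$
Here we use $k(k+1)^{-1}\le 1$, and the error terms are $\le e^{-K_j}$. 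We also need a lower bound on the main terms, so that these errors can be converted into relative errors. We have $\abs{e^{kz}}\ge e^{-k\abs{z}}\ge e^{-K_j/12}$ and similarly $\abs{e^{\alpha z}}\ge e^{-K_j/12}$. Since $e/12<e^{-1.4}$, the errors are relatively $O(e^{-1.3K_j})$. Hence
$$
\abs{\mathcal{N}_j(s,\alpha)}^q\le \abs{e^{kz}}^2\bigl(1+O(e^{-1.3K_j})\bigr)^{q}
\qquad\text{and}\qquad
\abs{e^{kz}}^2\le \abs{\mathcal{N}_j(s,k)}^2\bigl(1+O(e^{-1.3K_j})\bigr).
$$
Combining these gives the factor $(1+5e^{-K_j})$. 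For this we need $K_j$ large (guaranteed since $K_j\ge 500(k+1)^2P_j$ and $P_j$ is large, as $C_f$ is large) and $q$ bounded in terms of $k$. Both are true because $r$ depends only on $k$.

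\textbf{Case 2: $\abs{z}> K_j/(12(k+1))$.} Here we use the trivial bound
$$
\abs{\mathcal{N}_j(s,\alpha)}\le \sum_{m\le K_j}\frac{\abs{\alpha z}^m}{m!}\le (K_j+1)\,\frac{\max(1,\abs{\alpha z})^{K_j}}{\lfloor\cdot\rfloor!}.
$$
More cleanly, since $\abs{z}/K_j$ exceeds a constant, every term is at most $(k+1)^{K_j}\abs{z}^{K_j}$ up to the factorial. So $\abs{\mathcal{N}_j}\le (K_j+1)\max_{m\le K_j}\abs{kz}^m/m!$, and this is $\ll (12(k+1)\abs{z}/K_j)^{K_j}\cdot(\text{const})^{K_j}$.

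Raising to the power $q$, we note that $q\le 4$ requires $rk\ge 2$, which holds since $k>2/r$. This yields a bound of the shape $\bigl(c(k+1)\abs{z}/K_j\bigr)^{qK_j}$. Because the ratio $12(k+1)\abs{z}/K_j$ exceeds $1$, we may increase the exponent from $qK_j$ to $4K_j$, giving the bound $\le \mathcal{Q}_j(t)$.

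\textbf{Main obstacle.} The main difficulty is the constant bookkeeping in Case 2. We must check that the bound $\max_m \abs{kz}^m/m!\le (e\abs{kz}/m)^m$, together with the factor $K_j+1$, fits inside $(12(k+1)\abs{z}/K_j)^{4K_j}$. Handling small $m$ requires a little care, since $(e\abs{kz}/m)^m$ grows with $m$ in this range and so is maximal at $m=K_j$, where it is $\le (e(k+1)\abs{z}/K_j)^{K_j}$. The slack between $e$ and $12$, together with the exponent upgrade from $qK_j$ to $4K_j$, absorbs both $K_j+1$ and the power $q$.
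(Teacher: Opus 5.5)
\textbf{Gap in Case 2.} Your overall plan matches the paper's: split on the size of $|z|$, with $z=\mathcal{P}_j(1/2+it)$, use the truncated exponential series when $|z|$ is small and a crude bound when it is large. Case 1 is correct, and your conversion of the absolute error into a relative one via $|e^{\alpha z}|\ge e^{-K_j/12}$ is more careful than the paper's one-line version.

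The gap is in Case 2, and it comes from your splitting point $|z|=K_j/(12(k+1))$. Put $X=12(k+1)|z|/K_j$, so that $\mathcal{Q}_j(t)=X^{4K_j}$. Just above your threshold, $X$ is barely larger than $1$, so $\mathcal{Q}_j(t)$ is essentially $1$ and offers no room. Yet if, for instance, $z$ is positive real, then $\abs{\mathcal{N}_j(1/2+it,\alpha)}\ge 1+\alpha z$, and in fact it is about $e^{\alpha z}$, which is exponentially large in $K_j$. So the inequality $\abs{\mathcal{N}_j(1/2+it,\alpha)}^q\le\mathcal{Q}_j(t)$ that your Case 2 asserts is false near the boundary. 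Raising the exponent from $qK_j$ to $4K_j$ gains only a factor $X^{(4-q)K_j}\approx 1$ there, so it cannot absorb any constant $c>12$ in the base.

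Your bookkeeping also rests on a false monotonicity claim. The map $m\mapsto (ey/m)^m$ increases only for $m<y$. So when $y=k|z|<K_j$, the largest term is about $e^{y}$, attained near $m=y$, not at $m=K_j$. Near your threshold, $e^{y}$ is far larger than $X^{K_j}\approx 1$.

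\textbf{Repair.} Split at $|z|\le K_j/(10(k+1))$, as the paper does. Case 1 still works: the tail is $\ll (e/10)^{K_j}$ and the main terms are $\ge e^{-K_j/10}$, so the relative errors are $\ll e^{-1.2K_j}$. In Case 2, with $y=(k+1)|z|\ge K_j/10$,
\[
\abs{\mathcal{N}_j(1/2+it,\alpha)}\le \sum_{m\le K_j}\frac{y^m}{m!}\le y^{K_j}\sum_{m\le K_j}\frac{1}{m!}\Bigl(\frac{10}{K_j}\Bigr)^{K_j-m}\le\Bigl(\frac{10e^{1/10}\,y}{K_j}\Bigr)^{K_j}\le X^{K_j}.
\]
Now $X\ge 6/5>1$, and $q\le 4$ because $rk>2$. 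Hence $\abs{\mathcal{N}_j(1/2+it,\alpha)}^q\le X^{qK_j}\le X^{4K_j}=\mathcal{Q}_j(t)$.

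More generally, the constant $c$ in a split of the form $|z|\le K_j/(c(k+1))$ must satisfy roughly $ce^{1/c}\le 12$ for Case 2 to close. The value $c=12$ is exactly the degenerate choice at which $\mathcal{Q}_j$ equals $1$ on the boundary.
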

	\begin{proof}[Proof of \Cref{lem:Npowerlem}]
		For $\abs{z}\le K/10$, we have
		\begin{equation}\label{approximation}
				\abs{\sum_{m=0}^K\frac{z^m}{m!}-e^z}\le \frac{\abs{z}^K}{K!}\le \left(\frac{e}{10}\right)^K.
		\end{equation}
		In the first case, where $\abs{\mathcal{P}_j(1/2+it)}\le K_j/(10k+10)$, we have by \eqref{approximation} that
		\begin{align*}
			\abs{\mathcal{N}_j(1/2+it,k-1/r)}^{\frac{2rk}{rk-1}}
			&=
			\abs{\sum_{0\le m\le K_j}\frac{1}{m!}((k-1/r)\mathcal{P}_j(1/2+it))^m}^{\frac{2rk}{rk-1}}\\
			&\le
			\abs{\exp\left(2k\mathcal{P}_j(1/2+it)\right)}(1+e^{-K_j})^{\frac{2rk}{rk-1}}\\
			&\le
			\abs{\mathcal{N}_j(1/2+it,k)}^2(1+5e^{-K_j}).
		\end{align*}
		The lemma follows in that case.
		In the second case, where $\abs{\mathcal{P}_j(1/2+it)}\ge K_j/(10k+10)$, we have 
		\begin{align*}
			\abs{\mathcal{N}_j(1/2+it,k-1/r)}
			&\le
			\sum_{0\le m\le K_j}\frac{1}{m!}((k-1/r) \abs{\mathcal{P}_j(1/2+it)})^m\\
			&\le
			\abs{(k+1) \mathcal{P}_j(1/2+it)}^{K_j}\sum_{0\le m\le K_j}\frac{1}{m!}(10/K_j)^{K_j-m}\\
			&\le
			\left(\frac{12(k+1)\abs{\mathcal{P}_j(1/2+it)}}{K_j}\right)^{K_j}.
		\end{align*}
		Hence, 
		\begin{align*}
				\abs{\mathcal{N}_j(1/2+it,k-1/r)}^{\frac{2rk}{rk-1}}
		&~\le~
		\left(\frac{12(k+1)\abs{\mathcal{P}_j(1/2+it)}}{K_j}\right)^{2rkK_j/(rk-1)}
		\\
		&~\le~
		\left(\frac{12(k+1)\abs{\mathcal{P}_j(1/2+it)}}{K_j}\right)^{4K_j}.
		\end{align*}
		\end{proof}
	
	\begin{lem}\label{lem:Qlem}
		With the above notation and for $T>0$ large, we have
		$$
		\int_{T}^{2T}\mathcal{Q}_j(t)\,dt\ll Te^{-K_j}.
		$$
	\end{lem}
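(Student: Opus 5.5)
The plan is to compute the $2K_j$-th moment of the short Dirichlet polynomial $\mathcal{P}_j(1/2+it)$ by a mean value theorem, and then feed it into $\mathcal{Q}_j$. Write $\mathcal{Q}_j(t) = (12(k+1)/K_j)^{4K_j}\,\abs{\mathcal{P}_j(1/2+it)^{2K_j}}^2$. Expanding,
$$
\mathcal{P}_j(s)^{2K_j} = \sum_{\substack{p\mid n\implies T_{j-1}<p\le T_j\\ \Omega(n)=2K_j}} \frac{(2K_j)!\, g(n)}{n^s},
$$
where $g$ is the multiplicative function already introduced, with $g(p^m)=f(p)^m/m!$; this is the multinomial theorem. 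Every $n$ in the sum satisfies $n\le T_j^{2K_j}$. Since $K_j=500(k+1)^2P_j$, and $P_j\ll \log_{j-1}T$ (from the hypothesis $\sum_{p\le x}\abs{f(p)}^2/p=N_f\log\log x+O(1)$ applied to $\log T_j\asymp \log T/(\log_j T)^2$), we get
$$
T_j^{2K_j}\le \exp\left(\frac{1000\,C\log T\,\log_{j-1}T}{(\log_jT)^2}\right)\le T^{1/10}
$$
for $j\ge 2$, once $C_f$ is large. The case $j=2$ needs the most care, but there $\log_{j-1}T/(\log_j T)^2=1/\log_2 T$ is small.

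Next I would apply the standard mean value theorem for Dirichlet polynomials,
$$
\int_T^{2T}\abs{\sum_{n\le N}a_n n^{-it}}^2dt=(T+O(N))\sum\abs{a_n}^2 .
$$
With $N\le T^{1/10}$ this gives
$$
\int_T^{2T}\abs{\mathcal{P}_j(1/2+it)}^{4K_j}dt \ll T\,((2K_j)!)^2\sum_{\Omega(n)=2K_j}\frac{\abs{g(n)}^2}{n}.
$$
To bound the sum I use $\abs{g(n)}^2\le \prod_{p^a\| n}\abs{f(p)}^{2a}/a!$, which holds since $(a!)^2\ge a!$. This yields
$$
(2K_j)!\sum_{\Omega(n)=2K_j}\frac{\abs{g(n)}^2}{n}\le \Bigl(\sum_{T_{j-1}<p\le T_j}\frac{\abs{f(p)}^2}{p}\Bigr)^{2K_j}=P_j^{2K_j},
$$
again by the multinomial theorem. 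Hence the $4K_j$-th moment is $\ll T\,(2K_j)!\,P_j^{2K_j}$.

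Finally I would combine this with the prefactor, using $(2K_j)!\le (2K_j)^{2K_j}$ and $P_j=K_j/(500(k+1)^2)$:
$$
\int_T^{2T}\mathcal{Q}_j\,dt\ll T\left(\frac{12(k+1)}{K_j}\right)^{4K_j}(2K_j)^{2K_j}\left(\frac{K_j}{500(k+1)^2}\right)^{2K_j} = T\left(\frac{144\cdot 2}{500}\right)^{2K_j}.
$$
Since $288/500<e^{-1/2}$, the right side is $\le Te^{-K_j}$.

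The main obstacle is the length check $T_j^{2K_j}\le T^{1/10}$, so that the $O(N)$ term is negligible. This needs the upper bound $P_j\ll\log_{j-1}T$, uniformly in $j$, from the asymptotic for $\sum_{p\le x}\abs{f(p)}^2/p$; the $O(1)$ error is absorbed by taking $C_f$ large. A second point to watch is that $K_j$ need not be an integer. I would either replace the exponent by $\lfloor\cdot\rfloor$ or note that only even integer powers are used, passing from $4K_j$ to $4\lceil K_j\rceil$ with harmless constants.
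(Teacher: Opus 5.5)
This is essentially the paper's proof: the same multinomial expansion of $\mathcal{P}_j^{2K_j}$, the mean value theorem, the bound $|g(p^a)|^2\le|f(p)|^{2a}/a!$ giving $\sum|g(n)|^2/n\le P_j^{2K_j}/(2K_j)!$, and then $P_j=K_j/(500(k+1)^2)$. You make the final arithmetic $(288/500)^{2K_j}\le e^{-K_j}$ explicit, and your remark that $K_j$ should be an integer points to something the paper passes over silently.

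There is one slip in your length check, a step the paper simply takes from its set-up claim $n\le T^{1/9}$. The bound you need is $P_j\ll\log_jT$, not $P_j\ll\log_{j-1}T$. With the latter, your exponent contains $\log_{j-1}T/(\log_jT)^2=e^{\log_jT}/(\log_jT)^2$, which is huge. For $j=2$ this ratio is $\log T/(\log_2T)^2$, not the $1/\log_2T$ you state. From the Mertens-type hypothesis one gets $P_2\le N_f\log_2T+O(1)$ and $P_j\le 2N_f(\log_jT-\log_{j+1}T)+O(1)$ for $j\ge3$. Hence $2K_j\log T_j\ll\log T/\log_jT\le\log T/C_f$, which is at most $\frac{1}{10}\log T$ once $C_f$ is large.
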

	\begin{proof}[Proof of \Cref{lem:Qlem}]
		We have 
		$$
		\mathcal{P}_j(s)^{2K_j}=\sum_{\substack{p \mid n \implies T_{j-1} < p \le T_j\\
				\Omega(n)= 2K_j}}\frac{(2K_j)!g(n)}{n^{s}}.
		$$
		It is a short Dirichlet polynomial because $T_j^{2K_j}\le T^{2/9}$.
		By the mean value theorem for Dirichlet polynomials, we have 
		\begin{align*}
			\int_{T}^{2T}\mathcal{Q}_j(t)\,dt
			&=
			\left(\frac{12(k+1)}{K_j}\right)^{4K_j}((2K_j)!)^2\int_T^{2T}
			\abs{\sum_{\substack{p \mid n \implies T_{j-1} < p \le T_j\\
						\Omega(n)= 2K_j}}\frac{g(n)}{n^{1/2+it}}}^2\,dt\\
			&=\left(\frac{12(k+1)}{K_j}\right)^{4K_j}((2K_j)!)^2 (T+O(\sqrt{T}))
			\sum_{\substack{p \mid n \implies T_{j-1} < p \le T_j\\
					\Omega(n)= 2K_j}}\frac{\abs{g(n)}^2}{n}.
		\end{align*}
		Since $\abs{g(p^a)}^2=\abs{f(p)}^{2a}/(a!)^2\le\abs{f(p)}^{2a}/a!$, by expanding $P_j^{2K_j}$ we get
		$$
		\sum_{\substack{p \mid n \implies T_{j-1} < p \le T_j\\
				\Omega(n)= 2K_j}}\frac{\abs{g(n)}^2}{n}
		~\le~
		\frac{1}{(2K_j)!}\left(\sum_{T_{j-1}<p\le T_j}\frac{\abs{f(p)}^2}{p}\right)^{2K_j}
		~=~
		\frac{P_j^{2K_j}}{(2K_j)!}.
		$$
		Using the above and the fact that $P_j= K_j/(500(k+1)^2)$, we get 
		$$
		\int_{T}^{2T}\mathcal{Q}_j(t)\,dt
		~\le ~
		\left(\frac{12(k+1)}{K_j}\right)^{4K_j}(2K_j)! (T+O(\sqrt{T}))P_j^{2K_j}\ll Te^{-K_j}.
		$$
	\end{proof}	
	
	We now deduce \Cref{lem:multiNlem} from \Cref{lem:Npowerlem} and \Cref{lem:Qlem} as follows.
	
	\begin{proof}[Proof of \Cref{lem:multiNlem}]
		From \cite[p. 11]{HeapSoundararajan2022}, we have that 
		\begin{align*}
		&\int_T^{2T}\abs{\mathcal{N}\left(1/2+it,k-1/r\right)}^{\frac{2rk}{rk-1}}\,dt \\
		&\ll~
		T\prod_{j=2}^{\ell}\left(\frac{1}{T}\int_{T}^{2T}
		\left(\abs{\mathcal{N}_j\left(\frac{1}{2}+it,k\right)}^2(1+5e^{-K_j})
		+\mathcal{Q}_j(t)\right)\,dt\right).
		\end{align*}
		By the mean value theorem for Dirichlet polynomials, we have
		\begin{align*}
			\int_{T}^{2T}\abs{\mathcal{N}_j\left(\frac{1}{2}+it,k\right)}^2\,dt
			&=
			(T+O(\sqrt{T}))\sum_{\substack{p \mid n \implies T_{j-1} < p \le T_j\\ \Omega(n)\le K_j}}
			\frac{k^{2\Omega(n)}\abs{g(n)}^2}{n}\\
			&\le
			(T+O(\sqrt{T}))\prod_{T_{j-1} < p \le T_j}\left(1+\frac{k^2\abs{f(p)}^2}{p}+O\left(\frac{\abs{f(p)}^2}{p^{2-2\theta}}\right)\right).
		\end{align*}
		Using this and \Cref{lem:Qlem}, we get
\begin{align*}
			\frac{1}{T}\int_{T}^{2T}
	&\left(\abs{\mathcal{N}_j\left(\frac{1}{2}+it,k\right)}^2(1+5e^{-K_j})
	+O(\mathcal{Q}_j(t))\right)\,dt	
	\\
	&\qquad\qquad\qquad\qquad\qquad\le
	(1+O(e^{-K_j}))\prod_{T_{j-1} < p \le T_j}\left(1+\frac{k^2\abs{f(p)}^2}{p}+O\left(\frac{\abs{f(p)}^2}{p^{2-2\theta}}\right)\right).
\end{align*}
		Hence, 
		\begin{align*}
		\int_T^{2T}\abs{\mathcal{N}\left(1/2+it,k-1/r\right)}^{\frac{2rk}{rk-1}}\,dt
		&\ll~
		T\prod_{p \le T}\left(1+\frac{k^2\abs{f(p)}^2}{p}+O\left(\frac{\abs{f(p)}^2}{p^{2-2\theta}}\right)\right)\\
		&\ll~
		T(\log T)^{N_fk^2}.
		\end{align*}
	\end{proof}
	
	\section[Proof of Lemma 3]{PROOF OF LEMMA \ref{lem:multimainlem}}
	To prove \Cref{lem:multimainlem} we utilize the technique of Heath-Brown \cite{HeathBrown1981}. Let $N=T^{1/2}$ and 
	$$
	S(s)=\sum_{n\le N}\frac{f_{1/r}(n)}{n^s}.
	$$
	Let 
	$$
	E(s)= L(s,f)-S(s)^r.
	$$
	Let
	$$
	w(t)=\int_{T}^{2T}e^{-2(t-\tau)^2/r}\,d\tau,
	$$
	and
	$$
	\mathcal{J}(\sigma)=\int_{-\infty}^{\infty}\abs{L(\sigma+it,f)}^{2/r}\abs{\mathcal{N}(\sigma+it,k-1/r)}^2w(t)\,dt,
	$$
	$$
	\mathcal{K}(\sigma)=\int_{-\infty}^{\infty}\abs{E(\sigma+it)}^{2/r}\abs{\mathcal{N}(\sigma+it,k-1/r)}^2w(t)\,dt,
	$$
	$$
	\mathcal{L}(\sigma)=\int_{-\infty}^{\infty}\abs{S(\sigma+it)}^{2}\abs{\mathcal{N}(\sigma+it,k-1/r)}^2w(t)\,dt.
	$$
	Since 
	$$
	\abs{S(s)}^{2}=\abs{L(s,f)-E(s)}^{2/r}\ll\abs{L(s,f)}^{2/r}+\abs{E(s)}^{2/r},
	$$
	it follows that 
	\begin{equation}\label{eq:I}
		\mathcal{L}(\sigma)\ll \mathcal{J}(\sigma)+\mathcal{K}(\sigma),
	\end{equation}
	and similarly, 
	\begin{equation}\label{eq:II}
		\mathcal{K}(1/2)\ll \mathcal{J}(1/2)+\mathcal{L}(1/2).
	\end{equation}
	The following two results are due to Gabriel \cite{Gabriel1927}.
	
	\begin{lem}\label{lem:gabriel1}
		Let $f(z)$ be holomorphic in the infinite strip $\alpha < \Re(z) < \beta$, and continuous for $\alpha \le \Re(z) \le \beta$. Suppose $f(z) \to 0$ as $\abs{\Im(z)} \to \infty$ uniformly for $\alpha \le \Re(z) \le \beta$. Then for $\alpha \le \gamma \le \beta$ and any $q > 0$ we have
		$$
		\int_{-\infty}^{\infty} \abs{f(\gamma + it)}^q \, dt \le \left\{ \int_{-\infty}^{\infty} \abs{f(\alpha + it)}^q \, dt \right\}^{(\beta - \gamma)/(\beta - \alpha)} \left\{ \int_{-\infty}^{\infty} \abs{f(\beta + it)}^q \, dt \right\}^{(\gamma - \alpha)/(\beta - \alpha)}.
		$$
	\end{lem}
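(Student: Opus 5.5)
This is Gabriel's convexity theorem, a strip analogue of the Hadamard three-lines theorem for $L^q$ norms along vertical lines. I would prove it through subharmonicity and the Poisson kernel of the strip.

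\textbf{Normalisation.} After a real affine change of variable I take $\alpha=0$ and $\beta=1$, so $\gamma\in[0,1]$. I may assume both boundary integrals
$$
A=\int_{-\infty}^{\infty}\abs{f(it)}^q\,dt,\qquad B=\int_{-\infty}^{\infty}\abs{f(1+it)}^q\,dt
$$
are finite, since otherwise there is nothing to prove. I may also assume they are nonzero. If $A=0$, then $f\equiv0$ on a line, and (by reflection/identity arguments) $f\equiv0$; alternatively, let a positive $A$ tend to zero at the end.

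\textbf{Representation of $\abs{f}^q$.} The function $u=\abs{f}^{q}$ is subharmonic in the open strip, because $\log\abs{f}$ is subharmonic and $\abs{f}^q=e^{q\log\abs{f}}$. It is continuous up to the boundary and tends to $0$ as $\abs{\Im z}\to\infty$ uniformly. By the maximum principle on truncated rectangles $[0,1]\times[-R,R]$, letting $R\to\infty$ and using the uniform decay to kill the horizontal sides, I get
$$
u(\gamma+it)\le\int_{-\infty}^{\infty}\bigl(P_0(\gamma,t-y)\,u(iy)+P_1(\gamma,t-y)\,u(1+iy)\bigr)\,dy,
$$
where $P_0,P_1$ are the Poisson kernels of the strip. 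They are explicit: $P_0(\gamma,y)=\frac{\sin\pi\gamma}{2(\cosh\pi y-\cos\pi\gamma)}$, and $P_1$ is the same with $\gamma$ replaced by $1-\gamma$. Integrating over $t$ and using $\int P_0\,dt=1-\gamma$ and $\int P_1\,dt=\gamma$ gives the additive bound
$$
\int\abs{f(\gamma+it)}^q\,dt\le(1-\gamma)A+\gamma B.
$$

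\textbf{From additive to multiplicative.} Apply the additive bound to $F(z)=e^{\lambda z}f(z)$ with real $\lambda$. The factor $e^{\lambda z}$ is bounded in the strip, so $F$ still satisfies the hypotheses. Since $\abs{e^{\lambda(\gamma+it)}}^q=e^{q\lambda\gamma}$, this yields
$$
e^{q\lambda\gamma}\int\abs{f(\gamma+it)}^q\,dt\le(1-\gamma)A+\gamma e^{q\lambda}B.
$$
Choose $e^{q\lambda}=A/B$. The right side becomes $A$ and the left factor becomes $(A/B)^{\gamma}$, so the bound reads $A^{1-\gamma}B^{\gamma}$. This is the claim, since the exponents are $(\beta-\gamma)/(\beta-\alpha)=1-\gamma$ and $(\gamma-\alpha)/(\beta-\alpha)=\gamma$.

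\textbf{Main obstacle.} The delicate step is justifying the Poisson-majorant inequality on the unbounded strip. There are two issues. First, the boundary data must be integrable; this is fine because $A,B<\infty$ is assumed and the kernel is bounded for fixed $\gamma\in(0,1)$. Second, the contribution of the horizontal edges at height $\pm R$ must vanish; this is where the uniform decay of $f$ is used. Concretely, I would compare $u$ with the harmonic function $h$ given by the Poisson integral of the boundary data plus $\varepsilon$. On a rectangle with $R$ large, $u\le\varepsilon$ on the horizontal sides by the decay hypothesis, so $u\le h+\varepsilon$ there. The maximum principle then gives $u\le h+\varepsilon$ inside, and I let $\varepsilon\to0$. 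Fubini is valid because all quantities are nonnegative. The endpoint cases $\gamma=0,1$ are trivial.
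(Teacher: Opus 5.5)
The paper gives no proof of this lemma. It quotes the result from Gabriel \cite{Gabriel1927} and uses it as a black box in the proof of \Cref{lem:JKlem}, so there is no internal argument to compare against.

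Your argument is a correct, self-contained proof. Its steps are:
\begin{enumerate}
\item Take the Poisson majorant of the subharmonic function $\abs{f}^q$, justified by the maximum principle on $[0,1]\times[-R,R]$, with the uniform decay controlling the horizontal sides.
\item Integrate in $t$ by Tonelli, using $\int P_0(\gamma,y)\,dy=1-\gamma$ and $\int P_1(\gamma,y)\,dy=\gamma$, to get the affine bound $(1-\gamma)A+\gamma B$.
\item Apply this to the twist $e^{\lambda z}f$ and choose $e^{q\lambda}=A/B$, which upgrades affine convexity to the logarithmic convexity claimed.
\end{enumerate}
The normalisation to $\alpha=0$, $\beta=1$ is legitimate. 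Under $z\mapsto\alpha+(\beta-\alpha)z$ every vertical integral is multiplied by $(\beta-\alpha)^{-1}$, and the right-hand side is homogeneous of degree one in $(A,B)$.

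Two small points should be tightened.

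\textbf{Boundary continuity of $h$.} For the comparison $u\le h+\varepsilon$ on the rectangle, $h$ must be continuous up to the two vertical lines with boundary values $u$. This follows from $u$ being bounded and continuous on the closed strip, which is a consequence of the continuity of $f$ plus the uniform decay. It does not follow from $A,B<\infty$; finiteness of $A$ and $B$ is only needed in the final Tonelli step.

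\textbf{Degenerate cases.} These are simpler than your ``let a positive $A$ tend to zero'' suggests.
\begin{itemize}
\item If $A=0<B<\infty$, the twisted affine bound gives $\int\abs{f(\gamma+it)}^q\,dt\le\gamma e^{q\lambda(1-\gamma)}B$. This tends to $0$ as $\lambda\to-\infty$ when $\gamma<1$.
\item The one genuinely indeterminate case $A=\infty$, $B=0$ (where the right side reads $\infty\cdot 0$) cannot occur. Indeed $B=0$ makes $f$ vanish on the line $\Re z=1$, and the reflection principle then forces $f\equiv 0$.
\end{itemize}
With these remarks made explicit, your proof would make the paper's use of the lemma self-contained.
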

	\begin{lem}\label{lem:gabriel2}
		Let $R$ be the closed rectangle with vertices $z_0$, $\bar{z}_0$, $-z_0$ and $-\bar{z}_0$. Let $F(z)$ be continuous on $R$ and holomorphic on the interior of $R$. Then
		$$
		\int_L \abs{F(z)}^q \,\abs{dz} \le \left\{ \int_{P_1} \abs{F(z)}^q \,\abs{dz} \right\}^{1/2} \left\{ \int_{P_2} \abs{F(z)}^q \,\abs{dz} \right\}^{1/2}
		$$
		for any $q \ge 0$, where $L$ is the line segment from $\frac{1}{2}(\bar{z}_0 - z_0)$ to $\frac{1}{2}(z_0 - \bar{z}_0)$, $P_1$ consists of the three line segments connecting $\frac{1}{2}(\bar{z}_0 - z_0)$, $\bar{z}_0$, $z_0$ and $\frac{1}{2}(z_0 - \bar{z}_0)$, and $P_2$ is the mirror image of $P_1$ in $L$.
	\end{lem}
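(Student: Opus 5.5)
The plan is to reduce the two-sided bound to a one-sided one by a reflection argument, and then to Hölder's inequality. Write $z_0=x_0+iy_0$ with $x_0,y_0>0$. Then $R=[-x_0,x_0]\times[-y_0,y_0]$, and $L$ is the segment of the imaginary axis from $-iy_0$ to $iy_0$. The path $P_1$ is the right half of $\partial R$, and $P_2$ is its reflection under $z\mapsto -\bar z$.

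\textbf{Symmetrisation.} I would set
$$
G(z)=F(z)\,\overline{F(-\bar z)}.
$$
This is continuous on $R$ and holomorphic in its interior, since $z\mapsto\overline{F(-\bar z)}$ is holomorphic. It satisfies $\abs{G(-\bar z)}=\abs{G(z)}$, and on $L$ we have $-\bar z=z$, so $\abs{G}=\abs{F}^2$ there. With $p=q/2$ it therefore suffices to prove the one-sided inequality
$$
\int_L\abs{G(z)}^{p}\,\abs{dz}\le\int_{P_1}\abs{G(z)}^{p}\,\abs{dz}. \qquad (\ast)
$$
Granting $(\ast)$, Cauchy--Schwarz on $P_1$ gives
$$
\int_{P_1}\abs{F(z)}^{q/2}\abs{F(-\bar z)}^{q/2}\abs{dz}\le\Big(\int_{P_1}\abs{F}^q\abs{dz}\Big)^{1/2}\Big(\int_{P_1}\abs{F(-\bar z)}^q\abs{dz}\Big)^{1/2},
$$
and the last factor equals $\big(\int_{P_2}\abs{F}^q\abs{dz}\big)^{1/2}$ because reflection preserves arclength. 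The case $q=0$ is trivial.

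\textbf{Proof of $(\ast)$ via harmonic measure.} The function $v=\abs{G}^p$ is subharmonic, being $\exp(p\log\abs{G})$, and it is continuous on $R$. So for $z$ in the interior of $R$ we have $v(z)\le\int_{\partial R}v(\zeta)\,\omega_z(\zeta)\abs{d\zeta}$, where $\omega_z$ is the harmonic-measure (Poisson) density of $R$ at $z$. Integrating over $z\in L$ and using Fubini gives
$$
\int_L v\le\int_{\partial R}v(\zeta)\kappa(\zeta)\abs{d\zeta},\qquad \kappa(\zeta)=\int_L\omega_z(\zeta)\abs{dz}.
$$
Both $v$ and $\kappa$ are invariant under $\zeta\mapsto-\bar\zeta$, because $R$ and $L$ are. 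Hence the right side equals $2\int_{P_1}v\kappa$, and $(\ast)$ follows from the pointwise bound $\kappa\le 1/2$ on $\partial R$. The endpoints of $L$ lie on $\partial R$, so I would first apply this to a slightly shrunken segment, or use continuity.

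\textbf{Main obstacle: $\kappa\le1/2$.} Note that $\kappa(\zeta)\abs{d\zeta}$ is the exit distribution on $\partial R$ of Brownian motion started uniformly, with length measure, on $L$. Equivalently, $\kappa$ is the inward normal derivative on $\partial R$ of the Green potential $U(w)=\int_L g_R(w,z)\abs{dz}$. My first attempt would be domain monotonicity: $g_R\le g_S$, where $S=\{\abs{\Im w}<y_0\}$ is the horizontal strip containing $R$. Both $U$ and the corresponding strip potential vanish on the horizontal sides, so comparing normal derivatives there gives $\kappa\le\kappa_S$. For the strip, $\kappa_S$ on the top and bottom lines can be computed explicitly, via the map $e^{\pi w/(2y_0)}$ or by Fourier analysis in $x$, and I expect it to be at most $1/2$ by the left--right symmetry of the strip.

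On the vertical sides $\Re\zeta=\pm x_0$ a separate comparison is needed. There I would use the vertical strip $\{\abs{\Re w}<x_0\}$, which contains $R$, and in which the potential of $L$ is explicit. If pointwise control turns out to be delicate near the corners, I would fall back on Gabriel's original route. That route maps $R$ conformally onto a disc with $L$ going to a diameter, and then uses a Fejér--Riesz-type inequality $\int_{\text{diam}}\abs{h}^p\le\frac12\int_{\text{circle}}\abs{h}^p$ after pulling back with the appropriate Jacobian weight.
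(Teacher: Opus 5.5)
The paper does not prove this lemma; it is quoted from Gabriel, so your argument has to stand on its own. Your reduction is sound. $G(z)=F(z)\overline{F(-\bar z)}$ is holomorphic, $|G|=|F|^2$ on $L$, and $|G|$ is symmetric, so $(\ast)$ plus Cauchy--Schwarz gives the lemma; $(\ast)$ is even true.

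\textbf{The gap: $\kappa\le 1/2$ is false.} The trouble is not at the corners but at the endpoints $\pm iy_0$ of $L$, where $L$ meets $\partial R$. Near $iy_0$ the rectangle contains a half-disc. By monotonicity of harmonic measure, the Poisson density of $R$ at $z=i(y_0-t)$ for $\zeta=iy_0+s$ is therefore $\gg t/(s^2+t^2)$ for small $|s|,t$, whence $\kappa(iy_0+s)\gg\log(1/|s|)$. The strip does not help either. For $y_0=\pi/2$, mapping by $e^w$ gives $\kappa_S(x+i\pi/2)=\frac{1}{\pi}\int_{-\pi/2}^{\pi/2}\frac{e^x\cos y}{\cos^2 y+(e^x-\sin y)^2}\,dy$, which diverges at $x=0$. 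Shrinking $L$ only produces kernels whose suprema tend to infinity.

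More fundamentally, your argument uses nothing about $v=|G|^p$ except that it is continuous, symmetric and subharmonic, and $(\ast)$ is false in that generality. Let $v$ be the Poisson integral of a symmetric tent of height $1$ and half-width $\epsilon$ centred at $iy_0$. Then $\int_{P_1}v\,|dz|=\epsilon/2$, whereas $\int_L v\,|dz|=\int_{\partial R}v\kappa\,|d\zeta|\gg\epsilon\log(1/\epsilon)$. So no estimate on $\kappa$ can close the argument: you must use the holomorphy of $G$, not just the subharmonicity of $|G|^p$.

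\textbf{The missing idea: Cauchy's theorem.} This is also where the geometric mean in the statement comes from. $L$ and $P_1$ are two paths from $-iy_0$ to $iy_0$. Suppose $F$ is holomorphic near $R$ and zero-free on $R$. Fix a branch of $F^{q/2}$ and put $\Phi(z)=F^{q/2}(z)\,\overline{F^{q/2}(-\bar z)}$; this is holomorphic and equals $|F|^q\ge 0$ on $L$. Hence $\int_L|F|^q|dz|=|\int_L\Phi\,dz|=|\int_{P_1}\Phi\,dz|\le\int_{P_1}|F(z)|^{q/2}|F(-\bar z)|^{q/2}|dz|$, and Cauchy--Schwarz on $P_1$ finishes exactly as in your reduction (for $q=2$ no branch is needed).

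For general $F$, proceed in three steps:
\begin{enumerate}
\item Replace $F$ by $F(\lambda z)$ with $\lambda\uparrow 1$. This converges uniformly on $R$, and each dilate has only finitely many zeros.
\item Divide out the interior zeros by a Blaschke-type product $B$ built from a Riemann map of $R$ onto the disc. Since $|B|\le 1$ on $R$ and $|B|=1$ on $\partial R$, this can only increase the left side and leaves the $P_1,P_2$ integrals unchanged.
\item Remove zeros on $\partial R$ by a second dilation.
\end{enumerate}

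Your fallback (conformal map to the disc, Fej\'er--Riesz applied to $(G\circ\psi)(\psi')^{1/p}$, then symmetry) can be made rigorous. However, it only outsources this same Cauchy-theorem argument, since that is how Fej\'er--Riesz is proved, and as written it is only a sketch.
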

	Applying \Cref{lem:gabriel1} and \Cref{lem:gabriel2} we show that $\mathcal{J}(\sigma)$ and $\mathcal{K}(\sigma)$ satisfy the following convexity estimates.
	
	\begin{lem}\label{lem:JKlem}
		Let $1/2\le \sigma\le 3/4$ and $T>0$ be large. Then 
		$$
		\mathcal{J}(\sigma)\ll T^{\sigma-1/2}\mathcal{J}(1/2)^{3/2-\sigma}+e^{-T^2/(4r)},
		$$
		and for $\theta<\eta<1/2$,
		$$
		\mathcal{K}(\sigma)\ll_\eta \mathcal{K}(1/2)^{\frac{5+2\eta-4\sigma}{3+2\eta}} T^{\frac{4\sigma-2}{3+2\eta}} N^{\frac{2\eta-3}{2r}\frac{4\sigma-2}{3+2\eta}}  + \mathcal{K}(1/2)^{\frac{7+2\eta-8\sigma}{3+2\eta}} e^{-\frac{T^2}{2r}\frac{2\sigma-1}{3+2\eta}}.
		$$
	\end{lem}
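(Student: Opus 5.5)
The plan is to follow Heath-Brown's convexity argument. The basic idea is to write both weighted integrands as $q$-th powers, with $q=2/r$, of holomorphic functions, and then to interpolate between the line $\sigma=1/2$ and a line far enough to the right that everything is trivially bounded. Concretely, for a fixed $\tau\in[T,2T]$ I would set
$$
F_\tau(z)=L(z,f)\,\mathcal{N}(z,k-1/r)^r\,e^{(z-i\tau)^2}\,\frac{z-1}{z+1},\qquad G_\tau(z)=E(z)\,\mathcal{N}(z,k-1/r)^r\,e^{(z-i\tau)^2}.
$$
Two observations make this work.
\begin{itemize}
\item Since $|e^{(\sigma+it-i\tau)^2}|^{2/r}=e^{2\sigma^2/r}e^{-2(t-\tau)^2/r}$, we get
$$
\int_{-\infty}^\infty |F_\tau(\sigma+it)|^{2/r}\,dt\asymp \int_{-\infty}^\infty |L|^{2/r}|\mathcal{N}|^2 e^{-2(t-\tau)^2/r}\Bigl|\tfrac{\sigma+it-1}{\sigma+it+1}\Bigr|^{2/r}dt
$$
uniformly for $\sigma$ in a bounded range. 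Integrating over $\tau\in[T,2T]$ then recovers $\mathcal{J}(\sigma)$ (and similarly $\mathcal{K}(\sigma)$ from $G_\tau$).
\item The factor $(z-1)/(z+1)$ removes the possible pole at $s=1$. It only affects $|t|\le T/2$, say, where the Gaussian factor is at most $e^{-T^2/8}$. The polynomial growth hypothesis $L(s,f)=O(|t|^A)$, together with $|\mathcal{N}|\le T^{O(1)}$, controls that range and the decay at infinity required by \Cref{lem:gabriel1}. This is exactly where the additive error terms $e^{-cT^2/r}$ come from.
\end{itemize}

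For $\mathcal{J}$, I would apply \Cref{lem:gabriel1} to $F_\tau$ on the strip $1/2\le\Re z\le 3/2$ with $q=2/r$. On $\Re z=3/2$ the function $L(z,f)$ is bounded. Moreover, $|\mathcal{N}(3/2+it,k-1/r)|\le\prod_j\sum_m (k|\mathcal{P}_j|)^m/m!\le \exp(k\sum_p |f(p)|p^{-3/2})\ll 1$, using $f(p)\ll p^\theta$ with $\theta<1/2$. Hence the right-edge integral is $\ll 1$. This gives, for each $\tau$, a bound of the form $A_\tau^{3/2-\sigma}\cdot O(1)^{\sigma-1/2}$, where $A_\tau$ is the $\sigma=1/2$ integral. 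Integrating in $\tau$ over $[T,2T]$ and applying H\"older's inequality with exponents $1/(3/2-\sigma)$ and $1/(\sigma-1/2)$ yields $\mathcal{J}(1/2)^{3/2-\sigma}T^{\sigma-1/2}$. The part of the $t$-integral near the pole, where the factor $(z-1)/(z+1)$ is not comparable to $1$, is absorbed into the error term $e^{-T^2/(4r)}$.

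For $\mathcal{K}$ the structure is the same, with the right edge moved to $\beta=\tfrac12+\tfrac{3+2\eta}{4}$; this is the choice that produces the exponent $(4\sigma-2)/(3+2\eta)$. The key input is that the Dirichlet series of $S(s)^r$ agrees with that of $L(s,f)$ for all $n\le N$. Hence $E(s)=\sum_{n>N}e(n)n^{-s}$ for $\Re s>1$, with $e(n)\ll_\eta n^{\eta}$; this uses $f_{1/r}(n)\ll n^\theta$, $\theta<\eta$, and a divisor bound for the $r$-fold convolution. Therefore on $\Re z=\beta$ we have $|E|\ll_\eta N^{1+\eta-\beta}$, a negative power of $N$. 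Raising this to the power $2/r$ and inserting it into the interpolation gives the factor $N^{\frac{2\eta-3}{2r}\frac{4\sigma-2}{3+2\eta}}$; I would tune the bookkeeping of $N$-powers so that the exponent matches the statement. Since $E$ also carries the pole at $s=1$ and I do not want to multiply by a factor that distorts the $\sigma=1/2$ integral, I would apply \Cref{lem:gabriel2} on a rectangle centred at height $\tau$ that avoids $s=1$. The horizontal sides of this rectangle lie at distance $\asymp T$, where the Gaussian makes them of size $e^{-cT^2}$. Combining the two pieces via the inequality $(a+b)^{1/2}\le a^{1/2}+b^{1/2}$ and H\"older in $\tau$ should produce the two-term bound. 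The second term, $\mathcal{K}(1/2)^{(7+2\eta-8\sigma)/(3+2\eta)}e^{-\cdots}$, comes from the horizontal sides.

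The main obstacle is this $\mathcal{K}$ estimate. Specifically, I expect difficulty in getting a clean pointwise bound for $E$ on the far line $\Re z=\beta$ with the right power of $N$, and in organising the rectangle from \Cref{lem:gabriel2} so that the contributions of the horizontal sides come out with exactly the stated exponents. Handling the pole of $E$ at $s=1$ without a damping factor is the delicate point. I would first try reflecting the rectangle about the line $\Re z=\sigma$, with the vertical sides on $\Re z=1/2$ and on $\Re z=2\sigma-1/2$ (iterated if necessary), as in Heath-Brown's treatment.
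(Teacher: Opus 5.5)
Your treatment of $\mathcal{J}(\sigma)$ is essentially the paper's. The paper multiplies by $(z-1)^m$ and carries the factors $\tau^{2m/r}$; your bounded damping factor is cleaner. However, you need $\bigl(\frac{z-1}{z+1}\bigr)^m$, with $m$ the order of the pole at $s=1$, since $m$ can exceed $1$ (e.g.\ $L(s,f)=\zeta(s)^2$). The $\mathcal{K}(\sigma)$ part has two genuine gaps.

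The first gap is the far-line input. A pointwise bound $|E(\beta+it)|\ll N^{1+\eta-\beta}$ at $\beta=\tfrac54+\tfrac\eta2$ gives $|E|^{2/r}\ll N^{(2\eta-1)/(2r)}$. The final factor is then $N^{\frac{2\eta-1}{2r}\cdot\frac{4\sigma-2}{3+2\eta}}$, which exceeds the stated one by $N^{\frac1r\cdot\frac{4\sigma-2}{3+2\eta}}$. The exponent of $\mathcal{K}(1/2)$ pins the effective right edge at $\tfrac54+\tfrac\eta2$, so no adjustment of the bookkeeping recovers this. What is missing is the mean value theorem for Dirichlet series. After integrating in $\tau$, the far line enters only through $\int_{T/2}^{3T}|E(\tfrac54+\tfrac\eta2+it)|^{2/r}\,dt$. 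The bound $\int_{T/2}^{3T}|E(\tfrac54+\tfrac\eta2+it)|^{2}\,dt\ll T\sum_{n>N}|a_n|^2n^{-5/2-\eta}+\sum_{n>N}|a_n|^2n^{-3/2-\eta}\ll_\eta TN^{\eta-3/2}$ saves a full factor $N$ over the square of the pointwise bound. H\"older then gives $TN^{(\eta-3/2)/r}$, which is exactly the stated power of $N$. Your weaker exponent would still suffice in \Cref{lem:multimainlem}, because $2\eta-1<0$, but it does not prove the lemma as stated.

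The second gap is the pole at $s=1$. You cannot apply \Cref{lem:gabriel1} to $E\,\mathcal{N}^r e^{(z-i\tau)^2}$ on $\tfrac12\le\Re z\le\tfrac54+\tfrac\eta2$, because that strip contains the pole. The rectangle you suggest does not fix this. Reflected about $\Re z=\sigma$, its vertical sides are $\Re z=\tfrac12$ and $\Re z=2\sigma-\tfrac12\le 1$, so it never reaches the half-plane $\Re z>1$ where $E(z)=\sum_{n>N}a_nn^{-z}$ is available. Moreover, \Cref{lem:gabriel2} alone only produces the exponent $\tfrac12$ (dyadic exponents under iteration), not $\frac{4\sigma-2}{3+2\eta}$.

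The paper's construction has two steps. First, apply \Cref{lem:gabriel1} on the pole-free strip $\tfrac12\le\Re z\le\beta$ with $\beta=\tfrac78+\tfrac\eta4<1$. Then bound $\int_{\tau/2}^{3\tau/2}|f(\beta+it)|^{2/r}\,dt$ by \Cref{lem:gabriel2} on the rectangle centred at $\beta+i\tau$. This rectangle has vertical sides $\Re z=\tfrac12$ and $\Re z=2\beta-\tfrac12=\tfrac54+\tfrac\eta2$, and horizontal sides at heights $\tau/2$ and $3\tau/2$. It avoids $s=1$, and the Gaussian makes its horizontal sides contribute only $T^{O(1)}e^{-c\tau^2}$. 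The exponents compose as $\frac{\beta-\sigma}{\beta-1/2}+\frac12\cdot\frac{\sigma-1/2}{\beta-1/2}=\frac{5+2\eta-4\sigma}{3+2\eta}$ and $\frac12\cdot\frac{\sigma-1/2}{\beta-1/2}=\frac{4\sigma-2}{3+2\eta}$. The horizontal-side error is multiplied by the $\sigma=\tfrac12$ integral to the power $\frac{\beta-\sigma}{\beta-1/2}=\frac{7+2\eta-8\sigma}{3+2\eta}$, which is where the second term comes from.

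Alternatively, you could reuse your damping factor $\bigl(\frac{z-1}{z+1}\bigr)^m$, which has modulus at most $1$ on $\Re z\ge0$, together with the mean value theorem. That gives the first term directly on $\tfrac12\le\Re z\le\tfrac54+\tfrac\eta2$. It does leave a bare additive $T^{O(1)}e^{-cT^2}$ from $|t|\le T/2$. This is harmless in \Cref{lem:multimainlem}, where $\mathcal{K}(1/2)\ge T$, but it is not of the stated form.
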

	\begin{proof}[Proof of \Cref{lem:JKlem}] To prove the estimate for $\mathcal{J}(\sigma)$ in \Cref{lem:JKlem}, we take
		$$
		f(z)= (z-1)^{m}L(z,f)\mathcal{N}(z,k-1/r)^r\exp((z-i\tau)^2),
		$$ 
		where $m\ge 0$ is the order of the pole of $L(z,f)$ at $z=1$, with $\gamma=\sigma$, $\alpha=1/2$, $\beta=3/2$, $q=2/r$, where $1/2\le\sigma\le 3/4$ and $T\le \tau \le 2T$. We have
		\begin{align*}
			\int_{-\infty}^\infty\abs{f(1/2+it)}^{2/r}\,dt&\ll\int_{\tau/2}^{3\tau/2}\abs{f(1/2+it)}^{2/r}\,dt+e^{-2\tau^2/(5r)}\\
			&\ll\tau^{2m/r}\int_{\tau/2}^{3\tau/2}\abs{L(1/2+it,f)}^{2/r}\abs{\mathcal{N}(1/2+it,k-1/r)}^2e^{-2(t-\tau)^2/r}\,dt
			\\
			&\qquad\qquad\qquad\qquad\qquad\qquad\qquad\qquad\qquad\qquad\qquad+e^{-2\tau^2/(5r)}.
		\end{align*}
		Similarly, we have
		\begin{align}
			\int_{-\infty}^\infty\abs{f(3/2+it)}^{2/r}\,dt
			&\ll\tau^{2m/r}\int_{\tau/2}^{3\tau/2}\abs{L(3/2+it,f)}^{2/r}\abs{\mathcal{N}(3/2+it,k-1/r)}^2e^{-2(t-\tau)^2/r}\,dt\notag
			\\
			&\qquad\qquad\qquad\qquad\qquad\qquad\qquad\qquad\qquad\qquad\qquad+e^{-2\tau^2/(5r)}\notag\\
			&\ll\tau^{2m/r}\int_{\tau/2}^{3\tau/2}e^{-2(t-\tau)^2/r}\,dt+e^{-2\tau^2/(5r)}\notag\\
			&\ll \tau^{2m/r}.\notag
		\end{align}
		We conclude from \Cref{lem:gabriel1} that
		\begin{align}
			&\int_{-\infty}^{\infty}\abs{f(\sigma+it)}^{2/r}\,dt \notag \\
			&\ll \tau^{2m/r}\left(\int_{-\infty}^{\infty}\abs{L(1/2+it,f)}^{2/r}\abs{\mathcal{N}(1/2+it,k-1/r)}^2e^{-2(t-\tau)^2/r}\,dt\right)^{3/2-\sigma}\notag
			\\
			&\qquad\qquad\qquad\qquad\qquad\qquad\qquad\qquad\qquad\qquad\qquad+e^{-\tau^2/(3r)}\label{eq:Jlemeq1}.
		\end{align}
		However, 
		\begin{align}
			\int_{-\infty}^\infty\abs{L(\sigma+it,f)}^{2/r}&\abs{\mathcal{N}(\sigma+it,k-1/r)}^2e^{-2(t-\tau)^2/r}\,dt\notag\\
			&\ll\int_{\tau/2}^{3\tau/2}\abs{L(\sigma+it,f)}^{2/r}\abs{\mathcal{N}(\sigma+it,k-1/r)}^2e^{-2(t-\tau)^2/r}\,dt+e^{-2\tau^2/(5r)}\notag\\
			&\ll\tau^{-2m/r}\int_{\tau/2}^{3\tau/2}\abs{f(\sigma+it)}^{2/r}\,dt+e^{-2\tau^2/(5r)}\notag\\
			&\ll\tau^{-2m/r}\int_{-\infty}^{\infty}\abs{f(\sigma+it)}^{2/r}\,dt+e^{-2\tau^2/(5r)}.\label{eq:Jlemeq2}
		\end{align}
		The estimate for $\mathcal{J}(\sigma)$ in \Cref{lem:JKlem} now follows on combining \eqref{eq:Jlemeq1} with \eqref{eq:Jlemeq2} and integrating for $T \le \tau \le 2T$.
		
To prove the estimate for $\mathcal{K}(\sigma)$ in \Cref{lem:JKlem}, we apply \Cref{lem:gabriel1} to the function 
$$
f(z)=E(z)\mathcal{N}(z,k-1/r)^r\exp((z-i\tau)^2),
$$
with $\gamma=\sigma$, $\alpha=1/2$, $7/8<\beta<1$ where $1/2\le\sigma\le 3/4$, $q=2/r$, 
and $T\le \tau \le 2T$, we get
\begin{equation}\label{eq:K1}
\int_{-\infty}^{\infty} \abs{f(\sigma + it)}^{2/r} \, dt \le \left\{ \int_{-\infty}^{\infty} 
\abs{f(1/2 + it)}^{2/r} \, dt \right\}^{\frac{\beta-\sigma}{\beta-1/2}} 
\left\{ \int_{-\infty}^{\infty} \abs{f(\beta + it)}^{2/r} \, dt \right\}^{\frac{\sigma-1/2}{\beta-1/2}}.
\end{equation}
It follows that 
\begin{align}
\int_{-\infty}^{\infty} \abs{f(\beta + it)}^{2/r} \, dt 
%
%&=\int_{\tau/2}^{3\tau/2} \abs{f(\beta + it)}^{2/r} \, dt\notag\\
%&\qquad+O\left(\left(\int_{-\infty}^{\tau/2}+\int_{3\tau/2}^{\infty}\right)(T+\abs{t})^{2A+2}e^{-2(t-\tau)^2/r}\,dt\right)\notag\\
&=\int_{\tau/2}^{3\tau/2} \abs{f(\beta + it)}^{2/r} \, dt+O\left(T^{2A+2}e^{-\tau^2/(3r)}\right).\label{eq:K2}
\end{align}
We take $F(z)=f(z+\beta+i\tau)$ with $z_0=\beta-1/2+i\tau/2$ and $q=2/r$. Then, in view of \Cref{lem:gabriel2}, 
$$
\int_L \abs{F(z)}^q \,\abs{dz} = \int_{\tau/2}^{3\tau/2} \abs{f(\beta+ it)}^{2/r} \, dt,
$$
and
\begin{align*}
& \int_{P_1} \abs{F(z)}^q \,\abs{dz} = \int_{\tau/2}^{3\tau/2} \abs{f(2\beta-1/2 + it)}^{2/r} \, dt \\
&+ \int_{\beta}^{2\beta-1/2} \{\abs{f(u + i\tau/2)}^{2/r} + \abs{f(u + 3i\tau/2)}^{2/r}\} \, du.
\end{align*}
Here
$$
f(u + i\tau/2) \ll (T+\tau)^{A+r}e^{-\tau^2/4},
$$
thus
$$
\int_{P_1} \abs{F(z)}^q \,\abs{dz} = \int_{\tau/2}^{3\tau/2} \abs{f(2\beta-1/2 + it)}^{2/r} \, dt + O(T^{2A+2}e^{-\tau^2/(3r)}).
$$
Similarly,
$$
\int_{P_2} \abs{F(z)}^q \,\abs{dz} = \int_{\tau/2}^{3\tau/2} \abs{f(1/2 + it)}^{2/r} \, dt + O(T^{2A+2}e^{-\tau^2/(3r)}).
$$
\Cref{lem:gabriel2} therefore produces
\begin{align}
&\int_{\tau/2}^{3\tau/2} \abs{f(\beta + it)}^{2/r} \, dt \ll \left\{ \int_{-\infty}^{\infty} \abs{f(1/2 + it)}^{2/r} \, dt \right\}^{1/2} \left\{ \int_{\tau/2}^{3\tau/2} \abs{f(2\beta-1/2 + it)}^{2/r} \, dt \right\}^{1/2}\notag\\
			&\qquad\qquad\qquad\qquad\qquad+ T^{2A+2}e^{-\tau^2/(7r)},\label{eq:K3}
		\end{align}
		since
		$$
		\int_{\tau/2}^{3\tau/2} \{\abs{f(1/2 + it)}^{2/r} + \abs{f(2\beta-1/2 + it)}^{2/r}\} \, dt \ll (T+\tau)^{2A+2}.
		$$
		We now deduce from \eqref{eq:K1}, \eqref{eq:K2}, and \eqref{eq:K3} that 
		$$
		\begin{aligned}
			&\int_{-\infty}^{\infty} \abs{E(\sigma+it)}^{2/r}\abs{\mathcal{N}(\sigma+it,k-1/r)}^2e^{-2(t-\tau)^2/r} \,dt\\
			&\le \left\{ \int_{-\infty}^{\infty} \abs{E(1/2+it)}^{2/r}\abs{\mathcal{N}(1/2+it,k-1/r)}^2e^{-2(t-\tau)^2/r} \,dt \right\}^{\frac{\beta-\sigma/2-1/4}{\beta-1/2}} \\
			&\qquad\qquad\times\left\{ \int_{\tau/2}^{3\tau/2} \abs{E(2\beta-1/2+it)}^{2/r}\abs{\mathcal{N}(2\beta-1/2+it,k-1/r)}^2e^{-2(t-\tau)^2/r}\,dt \right\}^{\frac{\sigma/2-1/4}{\beta-1/2}} \\
			&\qquad+ \left\{ \int_{-\infty}^{\infty} \abs{E(1/2+it)}^{2/r}\abs{\mathcal{N}(1/2+it,k-1/r)}^2e^{-2(t-\tau)^2/r}\,dt \right\}^{\frac{\beta-\sigma}{\beta-1/2}}\\
			& \phantom{mmmmmmmmmmmmmmm} \{T^{2A+2}e^{-\tau^2/(7r)}\}^{\frac{\sigma-1/2}{\beta-1/2}}.
		\end{aligned}
		$$
		Integrating over $\tau$ from $T$ to $2T$, we get that $\mathcal{K}(\sigma)$ is
		\begin{align}
			&\ll \mathcal{K}(1/2)^{\frac{\beta-\sigma/2-1/4}{\beta-1/2}} \\
			& \left\{ \int_{T}^{2T} \int_{\tau/2}^{3\tau/2} \abs{E(2\beta-1/2+it)}^{2/r}\abs{\mathcal{N}(2\beta-1/2+it,k-1/r)}^2 e^{-2(t-\tau)^2/r} \,dt \, d\tau \right\}^{\frac{\sigma/2-1/4}{\beta-1/2}} \notag\\
			&\qquad\qquad\qquad\qquad\qquad\qquad\qquad\qquad\qquad\qquad\qquad\qquad+ \mathcal{K}(1/2)^{\frac{\beta-\sigma}{\beta-1/2}} \{e^{-T^2/(8r)}\}^{\frac{\sigma-1/2}{\beta-1/2}} \notag\\
			&\ll \mathcal{K}(1/2)^{\frac{\beta-\sigma/2-1/4}{\beta-1/2}} \left\{\int_{T/2}^{3T} \abs{E(2\beta-1/2+it)}^{2/r}\,dt \right\}^{\frac{\sigma/2-1/4}{\beta-1/2}}  + \mathcal{K}(1/2)^{\frac{\beta-\sigma}{\beta-1/2}} \{e^{-T^2/(8r)}\}^{\frac{\sigma-1/2}{\beta-1/2}} .\label{eq:K4}
		\end{align}
		We have
		$$
		E(s) = \sum_{n>N} a_n n^{-s} \qquad (\Re(s) > 1),
		$$
		where $a_n \ll_\eta n^\eta$. We choose $\beta=7/8+\eta/4$. Thus, by the mean value theorem \cite[Cor. 3]{meanvalue}, we find
		$$
		\begin{aligned}
			\int_{T/2}^{3T} \abs{E(2\beta-1/2+it)}^2 \,dt &\ll T\sum_{n>N}\abs{a_n}^2 n^{-4\beta+1}
			+\sum_{n>N}\abs{a_n}^2 n^{-4\beta+2}
			\\
			&\ll_\eta T N^{\eta - 3/2}.
		\end{aligned}
		$$
		We may now deduce that
		$$
		\int_{T/2}^{3T} \abs{E(2\beta-1/2+it)}^{2/r} \,dt \ll_\eta T^{1-1/r} (T N^{\eta-3/2})^{1/r} \ll_\eta T N^{(\eta-3/2)/r},
		$$
		so that, by \eqref{eq:K4}, it follows that
		$$
		\mathcal{K}(\sigma)\ll_\eta \mathcal{K}(1/2)^{\frac{5+2\eta-4\sigma}{3+2\eta}} T^{\frac{4\sigma-2}{3+2\eta}} N^{\frac{2\eta-3}{2r}\frac{4\sigma-2}{3+2\eta}}  + \mathcal{K}(1/2)^{\frac{7+2\eta-8\sigma}{3+2\eta}} e^{-\frac{T^2}{2r}\frac{2\sigma-1}{3+2\eta}}.
		$$
	\end{proof}
	We now turn our attention to estimating $\mathcal{L}(\sigma)$ close to the critical line $\sigma =1/2$.
	\begin{lem}\label{lem:Llem}
		Let $T>0$ be large. We have
		$$
		\mathcal{L}(\sigma)\asymp T(\sigma-1/2)^{-N_fk^2}
		$$
		uniformly for 
		$$
		\frac{1}{2}+\frac{1}{\log T}\le \sigma\le 1.
		$$
		Moreover,
		$$
		\mathcal{L}(1/2)\asymp T(\log T)^{N_fk^2}.
		$$
	\end{lem}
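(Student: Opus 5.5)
Since $\abs{S(s)}^2$ is the square of the modulus of a Dirichlet polynomial, $\mathcal{L}(\sigma)$ is the weighted mean square of the single Dirichlet polynomial
$$
S(s)\mathcal{N}(s,k-1/r)=\sum_{m}\frac{c(m)}{m^{s}},\qquad c(m)=\sum_{\substack{ab=m\\ a\le N,\ b\in\mathcal{N}}}f_{1/r}(a)(k-1/r)^{\Omega(b)}g(b).
$$
Its length is at most $N\cdot T^{1/9}=T^{11/18}$. I will evaluate $\mathcal{L}(\sigma)$ by expanding the square and integrating against $w$. Note that $\hat w(x)=\int w(t)e^{-ixt}\,dt$ equals $\int_T^{2T}e^{-i x\tau}d\tau$ times the Gaussian factor $\sqrt{\pi r/2}\,e^{-rx^2/8}$. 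The diagonal contribution is $\hat w(0)\sum_m \abs{c(m)}^2m^{-2\sigma}\asymp T\sum_m\abs{c(m)}^2m^{-2\sigma}$. For $m\ne n$, both are at most $T^{11/18}$, so $\abs{\log(m/n)}\gg T^{-11/18}$. Rather than relying on the Gaussian decay, I will use the mean value theorem for Dirichlet polynomials (Montgomery--Vaughan, as in \cite{meanvalue}) with the weight bounded above and below. Since $w(t)\gg 1$ on $[T+1,2T-1]$ and $w$ decays rapidly outside $[T/2,3T]$, this gives
$$
\mathcal{L}(\sigma)\asymp T\sum_m \frac{\abs{c(m)}^2}{m^{2\sigma}}+O\Big(\sum_m \frac{m\abs{c(m)}^2}{m^{2\sigma}}\Big)=\big(T+O(T^{11/18})\big)\sum_m\frac{\abs{c(m)}^2}{m^{2\sigma}}.
$$
The required bounds $c(m)\ll m^{\theta+\varepsilon}$-type estimates are harmless here. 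The problem is therefore reduced to showing that $D(\sigma):=\sum_m\abs{c(m)}^2m^{-2\sigma}$ satisfies $D(\sigma)\asymp(\sigma-1/2)^{-N_fk^2}$ for $\sigma\ge1/2+1/\log T$, and $D(1/2)\asymp(\log T)^{N_fk^2}$.

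The key heuristic is that $S\cdot\mathcal{N}$ mimics $L^{1/r}\exp((k-1/r)\sum_p f(p)p^{-s})$, whose coefficients at primes are $f(p)(1/r+k-1/r)=kf(p)$. Hence $D(\sigma)$ should behave like $\prod_{p\le T}(1+k^2\abs{f(p)}^2p^{-2\sigma})$. Using the hypothesis $\sum_{p\le x}\abs{f(p)}^2/p=N_f\log\log x+O(1)$ together with partial summation, this product is $\asymp\min(\log T,(\sigma-1/2)^{-1})^{N_fk^2}$.

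For the upper bound I will drop the truncation $a\le N$ in the convolution only after bounding things by multiplicative majorants. Write $c=c_1*c_2$, where $c_1=f_{1/r}\mathbf 1_{\le N}$ and $c_2$ is supported on $\mathcal{N}$. I will use Rankin's trick: insert the factor $(N/a)^{\delta}$ or simply extend to all $a$ with $p\le T$. Care is needed because $\abs{c(m)}^2$ is not multiplicative once truncated. I will handle this by bounding $\abs{c(m)}\le (\abs{f_{1/r}}\mathbf 1_{P^+\le T}*\abs{(k-1/r)}^{\Omega}\abs{g})(m)$ plus a tail over $a$ with a prime factor exceeding $T$. That tail is controlled by the factor $p^{-2\sigma}$ and the hypothesis on $f_{1/r}(p^m)$. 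The Euler product at $p$ then gives $1+\abs{f(p)/r+(k-1/r)f(p)}^2p^{-2\sigma}+O(\sum_{m\ge2}\abs{\cdot}^2p^{-m})$, and the sum over higher prime powers converges by the hypothesis $\sum_p\sum_{m\ge2}\abs{f_{1/r}(p^m)}^2p^{-m}<\infty$. Here I use $f_{1/r}(p)=f(p)/r$.

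For the lower bound, which is the main obstacle, the truncations $a\le N$ and $\Omega(n_j)\le K_j$ must not destroy the mass. I will restrict $m$ to a multiplicative set such as integers composed of primes $\le T^{1/(10\ell)}$-ish, with $m\le N$ automatically guaranteed by a Rankin weighting. Concretely, I will write $D\ge\sum_m\abs{\tilde c(m)}^2m^{-2\sigma}-(\text{error})$, where $\tilde c$ is the untruncated multiplicative convolution restricted to primes $\le T_\ell$. The two errors come from $a>N$, which is bounded by Rankin's trick with $(a/N)^{1/\log T}$-type weights, and from $\Omega(n_j)>K_j$, which is bounded by $e^{-K_j}$-type savings exactly as in \Cref{lem:Qlem}, since $K_j=500(k+1)^2P_j$ is much larger than the mean. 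Both errors are a small fraction of the main term because $\sum_j e^{-K_j}$ is small; the smallness comes from the choice $T_1=(k+1)^4C_f$ with $C_f$ large. Finally, at $\sigma=1/2$ the primes up to $T_\ell\ge T^{1/\log_\ell^2T}$ still contribute $(\log T)^{N_fk^2}$ up to constants, since $\log\log T_\ell=\log\log T+O(\log_{\ell+1}T)$ and $\log_{\ell+1} T=O(1)$ by the definition of $\ell$.
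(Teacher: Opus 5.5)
Your reduction to $T\sum_m\abs{c(m)}^2m^{-2\sigma}$ and your upper bound are the paper's. The condition $a\le N$ is simply dropped by positivity. No Rankin weight is needed, and the ``tail over $a$ with a prime factor exceeding $T$'' is empty, since $a\le T^{1/2}$. The majorant $\abs{f_{1/r}}*(k-1/r)^{\Omega}\abs{g}$ then has prime coefficient $k\abs{f(p)}$. For the higher prime powers you need $f(p)\ll p^{\theta}$ (for the terms with $u\ge 2$, or $v=1$, $u\ge1$) and not only the hypothesis on $f_{1/r}(p^v)$; the paper checks these cases one by one.

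Your lower bound takes a genuinely different route. It can be made to work, but the smallness of the error from $a>N$ is not justified by what you wrote.

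\textbf{The paper's route.} It restricts to squarefree $l=mn$ with $m$ built from primes $\le T_1$ and $n\in\mathcal{N}$. On this set $l\le N$ holds automatically, because the block condition $\Omega(n_j)\le K_j$ already forces $n\le T^{1/9}$. Every splitting of $l$ is admissible, so $a(l)=f_{1/r}(m)k^{\omega(n)}f(n)$ exactly. Hence the cutoff $a\le N$ never enters, and only the $\Omega$-truncation must be controlled, by the $e^{-K_j}$-type bound you describe (the paper leaves this implicit).

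\textbf{Your route.} You compare with the untruncated convolution $\tilde c$ on $T_\ell$-smooth integers and subtract two errors in $\ell^2$. This avoids evaluating coefficients exactly, but it needs a second Rankin estimate for the error $e_1$ coming from $a>N$. Your stated reason (``$\sum_j e^{-K_j}$ is small'') covers only the $\Omega$-error, not $e_1$.

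\textbf{Why $e_1$ is delicate.} With weight $(a/N)^{\delta}$ and $\delta=1/\log T$, the saving is only the constant $N^{-2\delta}=e^{-1}$. Meanwhile the majorant's prime coefficient becomes $(p^{\delta}/r+k-1/r)\abs{f(p)}$. If primes up to $T$ were allowed, the Euler product would grow by roughly $\exp(cN_fk/r)$, which need not be below $e$, and changing $\delta$ does not help in general.

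\textbf{What makes it work.} You are saved by $\log T_\ell/\log T=(k+1)^{-2}(\log_\ell T)^{-2}\le (k+1)^{-2}C_f^{-2}$. Use $p^{\delta}-1\ll\delta\log p$, not the uniform bound $p^{\delta}\le T_\ell^{\delta}$, which would lose a small but unbounded power of $\log T$. Combine this with $\sum_{p\le x}\abs{f(p)}^2\log p/p\ll_f\log x$, which follows from the $N_f\log\log x$ hypothesis by partial summation. Then the inflation is $\exp(O_f(C_f^{-2}))$ times a higher-prime-power contribution from $p>T_1$, which is small because $T_1$ is large. This gives $\|e_1\|\le(e^{-1/2}+\varepsilon)\|\tilde c\|$ with $\varepsilon$ small once $C_f$ is large, and hence $\|c\|\gg\|\tilde c\|$. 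You should make this computation explicit.

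Two smaller corrections:
\begin{itemize}
\item Your first suggested cutoff $T^{1/(10\ell)}$ would lose the unbounded factor $(10\ell)^{N_fk^2}$, so $T_\ell$ is the right cutoff.
\item The smallness of $\sum_j e^{-cP_j}$ comes from $\log_\ell T\ge C_f$, which makes $P_\ell$ large. The size of $T_1$ matters instead for the tail $\sum_{p>T_1}E_p$.
\end{itemize}
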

	\begin{proof}[Proof of \Cref{lem:Llem}]
		We note that 
		$$
		w(t)\ll \exp(-(T^2+t^2)/(18r))
		$$
		for $t\le 0$ or $t\ge 3T$. Hence, 
		$$
		\mathcal{L}(\sigma)=\int_0^{3T}\abs{S(\sigma+it)}^{2}\abs{\mathcal{N}(\sigma+it,k-1/r)}^2w(t)\,dt+O(1).
		$$
		Moreover, $w(t)\ll 1 $ for all $t$, and $w(t)\gg 1 $ for $4T/3\le t\le 5T/3$. 
		Hence, we have 
		$$
		\mathcal{L}(\sigma)\gg \int_{4T/3}^{5T/3}\abs{S(\sigma+it)}^{2}\abs{\mathcal{N}(\sigma+it,k-1/r)}^2\,dt.
		$$
		Similarly,
		$$
		\mathcal{L}(\sigma)\ll  \int_{0}^{3T}\abs{ S(\sigma+it)}^{2}\abs{\mathcal{N}(\sigma+it,k-1/r)}^2\,dt.
		$$
		Thus, it suffices to prove the estimates in \Cref{lem:Llem} for 
		$$
		\int_{T}^{2T}\abs{ S(\sigma+it)}^{2}\abs{\mathcal{N}(\sigma+it,k-1/r)}^2\,dt
		$$
		in place of $\mathcal{L}(\sigma)$. 
		Let 
		$$
		S(s)\mathcal{N}(s, k-1/r) = \sum_{l} \frac{a(l)}{l^s},
		$$ 
		where 
		$$
		a(l) = \sum_{\substack{mn=l \\ m \le N, n \in \mathcal{N}}} f_{1/r}(m)(k-1/r)^{\Omega(n)}g(n).
		$$
		Since the length of the polynomial $S(s)\mathcal{N}(s, k-1/r)$ is at most $T^{1/2} T^{1/9} = T^{11/18}$, applying the mean value theorem \cite[Cor. 3]{meanvalue} we get
		$$
		\int_{T}^{2T}\abs{S(\sigma+it)}^{2}\abs{\mathcal{N}(\sigma+it,k-1/r)}^2\,dt \asymp T\sum_{l} \frac{\abs{a(l)}^2}{l^{2\sigma}}.
		$$
		By the triangle inequality and multiplicativity of $f_{1/r}(n)$ and $g(n)$, we have the following upper bound:
		\begin{align*}
			\sum_{l } \frac{\abs{a(l)}^2}{l^{2\sigma}}&\le \sum_{l} \left(\sum_{\substack{mn=l \\ p \mid m \implies p \le T\\ p \mid n \implies p \le T}} \frac{\abs{f_{1/r}(m)}(k-1/r)^{\Omega(n)}\abs{g(n)}}{(mn)^\sigma}\right)^2\\
			& \le \prod_{p\le T} \left( \sum_{\substack{v_1, v_2, u_1, u_2 \ge 0 \\ v_1 + u_1 = v_2 + u_2}} \frac{(k-1/r)^{u_1+u_2}\abs{f_{1/r}(p^{v_1})} \abs{f_{1/r}(p^{v_2})} \abs{g(p^{u_1})} \abs{g(p^{u_2})}}{p^{2\sigma(v_1 + u_1)}} \right)\\
			&=\prod_{p\le T} \left( 1+k^2\frac{\abs{f(p)}^2}{p^{2\sigma}}+E_p(\sigma) \right),
		\end{align*}
		where 
		$$
		E_p(\sigma) = \sum_{j=2}^\infty \frac{1}{p^{2\sigma j}} \left( \sum_{v+u=j} (k-1/r)^u \abs{f_{1/r}(p^v)} \frac{\abs{f(p)}^u}{u!} \right)^2.
		$$
		We will show that 
		$$
		\sum_{p}E_p(\sigma)\le\sum_p E_p(1/2)<\infty.
		$$
		By the Cauchy-Schwarz inequality we have
		\begin{align*}
			\sum_p E_p(1/2) &\le \sum_p\sum_{j=2}^\infty \frac{j+1}{p^j} \sum_{v+u=j} (k-1/r)^{2u} \abs{f_{1/r}(p^v)}^2 \frac{\abs{f(p)}^{2u}}{(u!)^2}
		\\
		&=\sum_p\sum_{\substack{v, u \ge 0 \\ v+u \ge 2}} (v+u+1) \frac{\abs{f_{1/r}(p^v)}^2}{p^v} \frac{(k-1/r)^{2u} \abs{f(p)}^{2u}}{p^u (u!)^2}.
		\end{align*}
    If $u = 0, v \ge 2$ the contribution to the sum is given by 
	$$
	\sum_p\sum_{v=2}^\infty (v+1) \frac{\abs{f_{1/r}(p^v)}^2}{p^v}.
	$$
	We have $\abs{f_{1/r}(p^v)}^2 / p^v \ll p^{-(1 - 2\theta)v}$. We choose $V>0$ such that $(1 - 2\theta)V > 4$. Then, we have
	\begin{align*}
		\sum_p\sum_{v=2}^{\infty} (v+1) \frac{\abs{f_{1/r}(p^v)}^2}{p^v} &\le V\sum_p \sum_{2\le v\le V} \frac{\abs{f_{1/r}(p^v)}^2}{p^v}+\sum_p\sum_{v>V} (v+1)\frac{\abs{f_{1/r}(p^v)}^2}{p^v}
		\\
		&\le V\sum_p \sum_{2\le v\le V} \frac{\abs{f_{1/r}(p^v)}^2}{p^v}+\sum_p\sum_{v>V} \frac{v+1}{p^{(1 - 2\theta)v}}
		\\
		&\le V\sum_p \sum_{v=2}^\infty \frac{\abs{f_{1/r}(p^v)}^2}{p^v}+\sum_p \frac{1}{p^2}<\infty.
	\end{align*}
	If $v = 0, u \ge 2$ the contribution is 
	$$
	\sum_p\sum_{u=2}^\infty (u+1) \frac{(k-1/r)^{2u}}{(u!)^2} \frac{\abs{f(p)}^{2u}}{p^u}.
	$$
	For $u \ge 2$, we have
	$$
	\frac{\abs{f(p)}^{2u}}{p^u} \ll \frac{\abs{f(p)}^2}{p^{2-2\theta}}.
	$$
	Therefore,
\begin{align*}
		\sum_p\sum_{u=2}^\infty (u+1) \frac{(k-1/r)^{2u}}{(u!)^2} \frac{\abs{f(p)}^{2u}}{p^u}
	&\ll \sum_p\frac{\abs{f(p)}^2}{p^{2-2\theta}}  \sum_{u=2}^\infty \frac{(u+1) (k-1/r)^{2u}}{(u!)^2}
	\\
	&\ll \sum_p\frac{\abs{f(p)}^2}{p^{2-2\theta}}<\infty.
\end{align*}
	If $v = 1, u \ge 1$ the contribution is
	\begin{align*}
		\sum_p\sum_{u=1}^\infty (u+2) \frac{\abs{f(p)}^2}{r^2 p} \frac{(k-1/r)^{2u} \abs{f(p)}^{2u}}{p^u (u!)^2}&\ll\sum_p \frac{\abs{f(p)}^2}{p^{2-2\theta}} \sum_{u=1}^\infty \frac{(u+2) (k-1/r)^{2u}}{(u!)^2}
		\\
		&\ll\sum_p \frac{\abs{f(p)}^2}{p^{2-2\theta}}<\infty.
	\end{align*}
	If $v \ge 2, u \ge 1$ the contribution is 
	$$
	\sum_p\sum_{v=2}^\infty \sum_{u=1}^\infty (v+u+1) \frac{\abs{f_{1/r}(p^v)}^2}{p^v} \frac{(k-1/r)^{2u} \abs{f(p)}^{2u}}{p^u (u!)^2}.
	$$
	Using the inequality $v+u+1 \le (v+1)(u+1)$ for $v, u \ge 1$, we can factor the sum completely
	\begin{align*}
		\sum_p\sum_{v=2}^\infty \sum_{u=1}^\infty (v+u+1) &\frac{\abs{f_{1/r}(p^v)}^2}{p^v} \frac{(k-1/r)^{2u} \abs{f(p)}^{2u}}{p^u (u!)^2} 
		\\
		&\le \sum_p \left( \sum_{v=2}^\infty (v+1) \frac{\abs{f_{1/r}(p^v)}^2}{p^v} \right) \left( \sum_{u=1}^\infty (u+1) \frac{(k-1/r)^{2u}}{(u!)^2} \frac{\abs{f(p)}^{2u}}{p^u} \right)
		\\
		&\ll \sum_p \sum_{v=2}^\infty (v+1) \frac{\abs{f_{1/r}(p^v)}^2}{p^v}<\infty.
	\end{align*}
    Hence, we have 
		$$
		\sum_{l } \frac{\abs{a(l)}^2}{l^{2\sigma}}\ll\exp\left(\sum_{p\le T}k^2\frac{\abs{f(p)}^2}{p^{2\sigma}}\right).
		$$
		For the lower bound, we restrict the summation over $l$ to the set $\mathcal{S}$ of square-free integers $l = mn$, where $m \le T_1$ is composed entirely of primes $p \le T_1$, and $n \in \mathcal{N}$ is square-free. We see that for $l \in\mathcal{S}$ we have 
		$$
		\abs{a(l)}^2 = \left(\prod_{p \mid m} \frac{1}{r^2}\abs{f(p)}^2\right) \left(\prod_{p \mid n} k^2 \abs{f(p)}^2\right).
		$$
		Hence, we have
		\begin{align*}
			\sum_{l } \frac{\abs{a(l)}^2}{l^{2\sigma}}&\ge \sum_{l\in\mathcal{S}} \frac{\abs{a(l)}^2}{l^{2\sigma}}\\
			&=\left( \sum_{\substack{p \mid m \implies p \le T_1 \\ \mu^2(m)=1}} \frac{\abs{f(m)}^2}{r^{2\Omega(m)} m^{2\sigma}} \right) \prod_{j=2}^\ell \left( \sum_{\substack{n \in \mathcal{N}_j \\ \mu^2(n)=1}} \frac{k^{2\Omega(n)} \abs{f(n)}^2}{n^{2\sigma}} \right)\\
			&\gg\prod_{T_1<p\le T_\ell} \left( 1+k^2\frac{\abs{f(p)}^2}{p^{2\sigma}}\right)\\
			&\gg \exp\left(k^2\sum_{p\le T_\ell}\frac{\abs{f(p)}^2}{p^{2\sigma}}\right).
		\end{align*}
		For $\sigma-\frac{1}{2}\ge \frac{c}{\log x }$ with $c>0$, we have by Abel summation that
		\begin{align*}
			\sum_{p\le x}\frac{\abs{f(p)}^2}{p^{2\sigma}}&=\frac{1}{x^{2\sigma-1}}\sum_{p\le x}\frac{\abs{f(p)}^2}{p}+(2\sigma-1)\int_2^x\frac{1}{y^{2\sigma}}\left(\sum_{p\le y}\frac{\abs{f(p)}^2}{p} \right)dy\\
			&=N_f\frac{\log\log x}{x^{2\sigma-1}}+N_f(2\sigma-1)\int_2^x\frac{\log\log y}{y^{2\sigma}}dy+O_c(1)\\
			&=-N_f\log(2\sigma-1)+O_c(1).
		\end{align*}
		This finishes the proof of \Cref{lem:Llem}.
	\end{proof}
	We now deduce \Cref{lem:multimainlem} from \Cref{lem:JKlem} and \Cref{lem:Llem} following the argument of Heath-Brown \cite[p. 76]{HeathBrown1981}. 
	\begin{proof}[Proof of \Cref{lem:multimainlem}]
		If $\mathcal{K}(1/2)\le T$, then by \eqref{eq:I} with $\sigma=1/2$ and \Cref{lem:Llem}, we get 
		$$
		\mathcal{J}(1/2)\gg T(\log T)^{N_fk^2}.
		$$
		Henceforth, we may assume $\mathcal{K}(1/2)\ge T$. Then \Cref{lem:JKlem} with $\eta =\frac{1+2\theta}{4}$ yields
		
		\begin{equation}\label{eq:D}
			\mathcal{K}(\sigma) \ll \mathcal{K}(1/2) T^{\frac{3-2\eta}{2r(3+2\eta)}(1-2\sigma)}.
		\end{equation}
		By \eqref{eq:I}, \Cref{lem:JKlem}, and \eqref{eq:II}, we get 
		$$
		\mathcal{L}(\sigma)\ll T^{\sigma-1/2}\mathcal{J}(1/2)^{3/2-\sigma}+(\mathcal{L}(1/2)+\mathcal{J}(1/2))T^{\frac{3-2\eta}{2r(3+2\eta)}(1-2\sigma)}+e^{-T^2/(4r)}.
		$$
		Choosing $\sigma=1/2+c/\log T$ with $c>0$ and using \Cref{lem:Llem}, we get 
		$$
		T(\log T)^{N_fk^2}c^{-N_fk^2}\ll T(\log T)^{N_fk^2}e^{-c\frac{3-2\eta}{r(3+2\eta)}}+\mathcal{J}(1/2).
		$$
		Hence, for $c=c(f,k)>0$ sufficiently large, we see that 
		$$
		\mathcal{J}(1/2)\gg T(\log T)^{N_fk^2}.
		$$
		It remains to deduce $\mathcal{I}(T)\gg T(\log T)^{N_fk^2}$. However, $w(t)\ll 1$ for all $t$, and 
		$$
		w(t)\ll \exp(-(t^2+T^2)/(18r))
		$$
		for $t\le 0$ and $t\ge 3T$. Thus, 
		$$
		T(\log T)^{N_fk^2}\ll \mathcal{J}(1/2)\ll \int_{0}^{3T}\abs{L(1/2+it,f)}^{2/r}\abs{\mathcal{N}(1/2+it,k-1/r)}^2\,dt+e^{-T^2/(19r)}.
		$$
		Hence, 
		$$
		T(\log T)^{N_fk^2}\ll\int_{0}^{3T}\abs{L(1/2+it,f)}^{2/r}\abs{\mathcal{N}(1/2+it,k-1/r)}^2\,dt.
		$$
		This finishes the proof of \Cref{lem:multimainlem}.
	\end{proof}

\section{LINEAR COMBINATIONS OF \texorpdfstring{$L$}{L}-FUNCTIONS}

\label{sec:linearcombination}

We indicate how our method may be modified for linear combinations of $L$-functions by proving the following theorem.

\begin{thm}\label{thm:linearcombmultithm}
	Let $f_1,\ldots,f_d$ be non-zero multiplicative functions and $L(s,f_1),\ldots,L(s,f_d)$ be the $L$-functions associated with these functions. We suppose that for some $0<\theta<1/2$ and for all integers $r\ge 1$, each $f_j$ satisfies the estimates 
	$$
	f_{j,1/r}(n) \ll_{f_j,r} n^\theta\quad\text{and}\quad\sum_p\sum_{m=2}^\infty\frac{\abs{f_{j,1/r}(p^m)}^2}{p^m}<\infty,
	$$
	where $f_{j,1/r}(n)$ are formally defined by
	$$
	L(s,f_j)^{1/r}=\sum_{n=1}^\infty\frac{f_{j,1/r}(n)}{n^s}.
	$$
	Furthermore, we assume that for some $N_j>0$, we have as $x\to\infty$,
	$$
	\sum_{p\le x}\frac{\abs{f_j(p)}^2}{p}=N_j\log\log x+O(1),
	$$
	and that there exists a constant $B > 0$ such that for $i \neq j$, the following estimate holds for all real $t$:
	$$
	\Re\left(\sum_{p\le x}\frac{f_i(p)\overline{f_j(p)}}{p^{1+it}}\right)\le B\log(2+\abs{t}).
	$$
	Suppose that each $L(s,f_j)$ admits a meromorphic continuation to the half-plane $\sigma\ge 1/2$, with its only possible pole at $s=1$. Also, suppose that $L(s,f_j)=O(\abs{t}^A)$ as $\abs{t}\to\infty$ for some $A>0$, uniformly in the half-plane $\sigma\ge 1/2$. Let $N=\max\{N_1,\ldots,N_d\}$. 
	
	If $b_1,\ldots, b_d$ are non-zero complex numbers, then for all real $k\ge 0$ and large $T>0$, we have
	$$
	\int_T^{2T}\abs{\sum_{j=1}^d b_jL(1/2+it,f_j)}^{2k}\,dt\gg_{ (f_j), k, (b_j)} T(\log T)^{Nk^2}.
	$$
\end{thm}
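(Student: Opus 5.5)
We would run the argument for \Cref{thm:multiplicativethm} again with $L(s,f)$ replaced by $F(s)=\sum_{j=1}^d b_jL(s,f_j)$. The Dirichlet polynomial $\mathcal{N}(s,\alpha)$ would be built from a single component. Relabel so that $N_1=N$, and define $T_j,\mathcal{P}_j,P_j,K_j,\mathcal{N}_j,\mathcal{N}(s,\alpha)$ exactly as in Section 2 with $f=f_1$. With $r$ the smallest integer such that $k>2/r$, set
$$
\mathcal{I}(T)=\int_T^{2T}\abs{F(1/2+it)}^{2/r}\abs{\mathcal{N}(1/2+it,k-1/r)}^2\,dt .
$$
Hölder's inequality then reduces the theorem to two estimates. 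The first is \Cref{lem:multiNlem}, which only involves $f_1$ and therefore holds verbatim with exponent $N_1k^2=Nk^2$. The second is the analogue of \Cref{lem:multimainlem}, namely $\mathcal{I}(T)\gg T(\log T)^{Nk^2}$.

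For the second estimate we would follow Section 4. The first task is an $r$-th root approximation of $F$. Since $\sum_j b_j f_j(1)=\sum_j b_j$ may vanish, let $n_0$ be the least $n$ with $c(n):=\sum_j b_jf_j(n)\ne0$ (it exists, since otherwise $F\equiv0$, which contradicts the distinct asymptotics implied by the hypotheses). Then $n_0^{s}F(s)/c(n_0)=1+\sum_{n>1}c'(n)n^{-s}$ has a formal $r$-th root $\sum h(n)n^{-s}$, and we would verify $h(n)\ll_\varepsilon n^{\theta+\varepsilon}$ in the relevant range by expanding the binomial series and using $c'(n)\ll n^\theta$. If needed we would lower $\theta$ to any $\theta'<1/2$, which is all that \Cref{lem:JKlem} uses. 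Put $S(s)=\sum_{n\le N}h(n)n^{-s}$ and $E(s)=F(s)-c(n_0)n_0^{-s}S(s)^r$. The functionals $\mathcal{J},\mathcal{K},\mathcal{L}$ and the inequalities \eqref{eq:I} and \eqref{eq:II} are defined and proved as before, and \Cref{lem:JKlem} carries over unchanged. Its proof uses only meromorphy, polynomial growth, boundedness of $F\mathcal{N}^r$ near $\sigma=3/2$ (replace $n_0^sF$ there), and the mean value bound for the tail $E=\sum_{n>N}a_nn^{-s}$. Heath-Brown's closing argument then gives the conclusion once we have the analogue of \Cref{lem:Llem}, namely
$$
\mathcal{L}(\sigma)\asymp T(\sigma-1/2)^{-Nk^2}\quad\text{and}\quad\mathcal{L}(1/2)\asymp T(\log T)^{Nk^2}.
$$

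The main obstacle is this analogue of \Cref{lem:Llem}, because $S$ is no longer multiplicative. The mean value theorem still gives $\mathcal{L}(\sigma)\asymp T\sum_l\abs{a(l)}^2l^{-2\sigma}$ with $a=h*\big((k-1/r)^{\Omega}g\big)$, but the Euler product bound is lost. Our plan is to pass back to the integral side and handle the $r$-th root by comparison.

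\emph{Upper bound.} We would use $\abs{S}^2\ll\abs{F}^{2/r}+\abs{E}^{2/r}$ in reverse, or more simply Cauchy--Schwarz/Hölder with the multiplicative pieces $S_j$, the truncations of $L(s,f_j)^{1/r}$. Each cross term is then of the form $\int\abs{S_i}\abs{S_j}\abs{\mathcal{N}}^2$, and the diagonal estimates of \Cref{lem:Llem} give $T(\sigma-1/2)^{-\max(N_j,N_1)\cdot(\ldots)}$ with exponent at most $Nk^2$.

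\emph{Lower bound.} This is where the condition $\Re\sum_p f_i(p)\overline{f_j(p)}p^{-1-it}\le B\log(2+\abs{t})$ enters. After restricting to squarefree $l$ built from primes in $(T_1,T_\ell]$, as in the proof of \Cref{lem:Llem}, the coefficients are sums of terms $b_j$ times multiplicative pieces. The off-diagonal correlations between the $f_1$-aligned mollifier and the components $f_i$ with $i\ne1$ are Euler products governed by $\exp\big(\Re\sum f_i\overline{f_1}/p^{2\sigma}\big)$. The hypothesis, at $t=0$ after partial summation, bounds these by $(\sigma-1/2)^{-O(B)}$ times a factor smaller than the main diagonal $\exp\big(k^2\sum\abs{f_1(p)}^2p^{-2\sigma}\big)$. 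Hence the $j=1$ diagonal term dominates and gives $\gg T(\sigma-1/2)^{-Nk^2}$.

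I expect to have to check carefully that this domination holds uniformly for $\sigma\ge1/2+c/\log T$ with the constant $c$ chosen large. If it fails, the fallback is to pick, among all components with $N_j=N$, the one maximizing $\abs{b_j}$, and to absorb the remaining ones into the error via the correlation hypothesis.
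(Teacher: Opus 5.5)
Your proposal has a genuine gap, and it is in the step you yourself flag: you take a single $r$-th root of $F=\sum_j b_jL(s,f_j)$. Two things break.

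\emph{Growth.} The claim $h(n)\ll_\varepsilon n^{\theta+\varepsilon}$ is false in general. The binomial expansion only gives $\abs{h(n)}\le n^{\theta}\sum_{m}\abs{\binom{1/r}{m}}C^m d_m(n)$, with $C$ depending on the $b_j$, and the loss is real. If $c(1)=\sum_jb_j$ is small compared with $c(2)=\sum_jb_jf_j(2)\neq0$, Rouch\'e's theorem gives $F$ (generically simple) zeros near $\Re s=\log_2\abs{c(2)/c(1)}$. These can lie arbitrarily far to the right. The formal Dirichlet series of $F^{1/r}$ therefore cannot converge absolutely there, and $h$ grows like a large power of $n$. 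When $n_0>1$, $n_0^sF(s)$ is not even an ordinary Dirichlet series, since its frequencies are $n/n_0$. Consequently the tail bound $\int_{T/2}^{3T}\abs{E(2\beta-1/2+it)}^2\,dt\ll_\eta TN^{\eta-3/2}$ used in \Cref{lem:JKlem} is lost, and there is no freedom to ``lower $\theta$''.

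\emph{Structure.} Even with growth bounds, $h$ is neither multiplicative nor a linear combination of multiplicative functions. Already $h(p_1p_2)$ contains mixed products such as $b_1b_2f_1(p_1)f_2(p_2)$, coming from the $\binom{1/r}{2}(F-1)^2$ term. So your lower-bound description of the coefficients as ``$b_j$ times multiplicative pieces'' is wrong. Your upper bound via Cauchy--Schwarz with the $S_j$ has nothing to act on, because $S$ is not built from the $S_j$. And $\abs{S}^2\ll\abs{F}^{2/r}+\abs{E}^{2/r}$ is circular: Heath-Brown's closing argument needs an \emph{independent} bound $\mathcal{L}(1/2)\ll T(\log T)^{Nk^2}$. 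The analogue of \Cref{lem:Llem} is therefore not established.

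The missing idea is to take roots componentwise, as the paper does. Put $S_j(s)=\sum_nf_{j,1/r}(n)n^{-s}v(n/T^{1/2})$ with a smooth cutoff $v$, and $E=F-\sum_jb_jS_j^r$. The coefficients of $E$ live on $n>T^{1/2}$ and are $\ll n^{\theta+\varepsilon}$, so \Cref{lem:JKlem} goes through. Define $\mathcal{L}(\sigma)=\int\abs{\sum_jb_jS_j(\sigma+it)^r}^{2/r}\abs{\mathcal{N}(\sigma+it,k-1/r)}^2w(t)\,dt$.

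Because $2/r\le1$, the inequalities $\abs{x+y}^{2/r}\ge\abs{x}^{2/r}-\abs{y}^{2/r}$ and $\abs{\sum_jy_j}^{2/r}\le\sum_j\abs{y_j}^{2/r}$ give two bounds. Below, $\mathcal{L}(\sigma)$ is at least $\abs{b_1}^{2/r}\int_{4T/3}^{5T/3}\abs{S_1}^2\abs{\mathcal{N}}^2$ minus $\sum_{j\ge2}\abs{b_j}^{2/r}\int_{4T/3}^{5T/3}\abs{S_j}^2\abs{\mathcal{N}}^2$. Above, it is at most $\sum_j\int_0^{3T}\abs{S_j}^2\abs{\mathcal{N}}^2$. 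Every one of these integrals has multiplicative coefficients, and the $j=1$ term is \Cref{lem:Llem}.

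For $j\ge2$ the correlation hypothesis is used, but not at $t=0$ as you propose. Mellin inversion $v(x)=\frac{1}{2\pi i}\int_{(c)}V(z)x^{-z}\,dz$ with $c=1/\log T$ makes the sum over $l$ factor into an Euler product. Its exponent contains $\frac2r(k-\frac1r)\Re\sum_{T_1<p\le T_\ell}f_j(p)\overline{f_1(p)}p^{-1-z}$, which the hypothesis bounds at $t=\Im z$. The rapid decay of $V$ absorbs the resulting factor $(2+\abs{\Im z})^{O(B)}$. This gives $\ll T(\log T)^{N_j/r^2+N_1(k-1/r)^2}(\log\log T)^2$.

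Since $N_j\le N_1$ and $k>1/r$, that exponent is at most $N_1k^2-\frac{2N_1}{r}(k-\frac1r)<N_1k^2$. So the $j\ge2$ terms are negligible for $1/2+1/\log T\le\sigma\le1/2+\log\log T/\log T$, which is all Heath-Brown's argument needs.
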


We have the following corollary to \Cref{thm:linearcombmultithm}.

\begin{cor}\label{cor:linearcombcor}
	Let $q>1$ be an integer and $L(s,\chi)$ be the $L$-function associated with the Dirichlet character $\chi\bmod{q}$. If $(b_\chi)_{\chi\bmod{q}}$ are complex numbers not all zero, then for all real $k\ge 0$ and large $T>0$, we have
	$$
	\int_T^{2T}\abs{\sum_{\chi\bmod{q}} b_\chi L(1/2+it,\chi)}^{2k}\,dt\gg_{q,k,(b_\chi)} T(\log T)^{k^2}.
	$$
\end{cor}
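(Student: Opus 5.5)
We will deduce \Cref{cor:linearcombcor} from \Cref{thm:linearcombmultithm}, taking the $f_j$ to be the Dirichlet characters $\chi \bmod q$ with $b_\chi\neq 0$. Characters with $b_\chi=0$ are simply discarded, so we may assume every $b_\chi$ is non-zero and at least one remains.

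One point needs care first. Distinct characters $\chi\neq\chi'$ that are induced by the same primitive character agree at every prime $p\nmid q$. Their sum $\sum_p \chi(p)\overline{\chi'(p)}p^{-1-it}$ then has real part of size $\log\log x$ at $t=0$, which violates the hypothesis of \Cref{thm:linearcombmultithm}. So we first regroup. For each primitive $\chi^*$ of conductor $q^*\mid q$, and each $\chi$ induced by $\chi^*$, we have
$$
L(s,\chi)=L(s,\chi^*)\prod_{p\mid q}\bigl(1-\chi^*(p)p^{-s}\bigr).
$$
Hence
$$
\sum_\chi b_\chi L(s,\chi)=\sum_{\chi^*}D_{\chi^*}(s)L(s,\chi^*),
$$
where $D_{\chi^*}$ is a finite Dirichlet polynomial supported on integers composed of primes dividing $q$.

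To keep things within the theorem's framework, we would apply \Cref{thm:linearcombmultithm} in a version where the coefficients $b_j$ are replaced by such fixed Dirichlet polynomials $D_j(s)$. These are bounded above and below on $\sigma\ge 1/2$, except near their finitely many zeros, and they are harmless in the Heath-Brown argument. Alternatively, we absorb $D_{\chi^*}(s)L(s,\chi^*)$ into a single multiplicative $f_j$. This works when $D_{\chi^*}$ factors as a product over $p\mid q$ of local polynomials, since $f_j$ then differs from $\chi^*$ only at primes dividing $q$. If the $D_{\chi^*}$ do not factor that way, we split $D_{\chi^*}=\sum_{m\mid q^\infty}c_m m^{-s}$ and use the first approach.

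The main obstacle is ensuring that not all $D_{\chi^*}$ vanish identically. This follows from linear independence: the functions $L(s,\chi)$ for distinct $\chi \bmod q$ are linearly independent over $\mathbb{C}$, because their Dirichlet coefficients, restricted to $n$ coprime to $q$, are the characters themselves, and distinct characters are linearly independent on $(\mathbb{Z}/q\mathbb{Z})^\times$. So the combination is not identically zero, and some $D_{\chi^*}\neq 0$.

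Next we verify the hypotheses for distinct primitive characters $\chi^*_i\neq\chi^*_j$:

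1. \emph{Root bounds.} $L(s,\chi^*)^{1/r}$ has coefficients bounded by $d_{1/r}(n)\ll_\varepsilon n^\varepsilon$, and the prime-power sum converges.
2. \emph{Prime sums.} $\sum_{p\le x}|\chi^*(p)|^2/p=\log\log x+O(1)$, so $N_j=1$.
3. \emph{Analytic continuation.} $L(s,\chi^*)$ is entire for non-principal $\chi^*$, has a simple pole at $s=1$ for the trivial character, and has polynomial growth.
4. \emph{Cross-term bound.} The character $\psi=\chi^*_i\overline{\chi^*_j}$ is non-principal, since the two primitive characters are distinct. Then
$$
\sum_{p\le x}\frac{\psi(p)}{p^{1+it}}=\log L(1+1/\log x+it,\psi)+O(1).
$$
This is $\le \log\log(2+|t|)+O(1)$ by the standard bound $L(1+\sigma+it,\psi)\ll \log(q(2+|t|))$ together with a zero-free-region argument. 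That gives the required $B\log(2+|t|)$ bound.

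With all hypotheses in place, the theorem yields the lower bound $T(\log T)^{k^2}$.
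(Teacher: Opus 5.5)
Your final deduction matches the paper's: apply \Cref{thm:linearcombmultithm} with the $f_j$ taken to be characters, check that $\chi_i\overline{\chi_j}$ is non-principal, and bound $\Re\log L(1+1/\log x+it,\psi)=\log\abs{L(1+1/\log x+it,\psi)}$ from above. However, the ``point that needs care'' rests on a false claim, and it leads you to invoke machinery that is neither needed nor justified. Modulo a fixed $q$, no two distinct characters are induced by the same primitive character. A character $\chi \bmod q$ equals its inducing $\chi^*$ on $(\mathbb{Z}/q\mathbb{Z})^\times$ and vanishes elsewhere, so $\chi\mapsto\chi^*$ is injective. Equivalently, for $\chi_1\neq\chi_2 \bmod q$ the character $\chi_1\overline{\chi_2}$ is non-principal modulo $q$. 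The cross-term hypothesis can therefore be checked directly for the characters mod $q$, which is exactly what the paper does. Your regrouping is accordingly vacuous: each $D_{\chi^*}(s)$ is just $b_\chi\prod_{p\mid q}(1-\chi^*(p)p^{-s})$, so the ``absorb'' route always applies and simply gives back $f_j=\chi$, $b_j=b_\chi$. The linear-independence step is also unnecessary: after discarding the vanishing $b_\chi$, the theorem only asks that the remaining coefficients be non-zero.

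It is fortunate that your other branch is never reached, because it would be a genuine gap. The paper proves nothing for coefficients that are Dirichlet polynomials, and your justification is not correct as stated. A general finite Dirichlet polynomial such as $1+2\cdot 2^{-s}$ has infinitely many zeros in $\sigma\ge 1/2$ (all on the line $\sigma=1$), so it is not bounded below away from finitely many zeros. Finally, item~4 needs no zero-free region. Only an upper bound for $\log\abs{L(1+1/\log x+it,\psi)}$ is required, and $\abs{L(1+1/\log x+it,\psi)}\ll\log(q(2+\abs{t}))$ already gives it, as does the polynomial convexity bound used in the paper. With these simplifications your proof coincides with the paper's.
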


Relabel $f_1,\ldots,f_d$ so that $N=N_1$. Let $\ell$ denote the largest integer such that $\log_{\ell}T\ge C_{f_1,\ldots,f_d}$ where $C_{f_1,\ldots,f_d}>0$ is a sufficiently large constant depending only on $f_1,\ldots,f_d$. Put $T_1=(k+1)^4C_{f_1,\ldots,f_d}$, and for $2\le j\le \ell$ define $T_j$, $\mathcal{P}_j(s)$, $P_j$, $\mathcal{N}_j$, $\mathcal{N}$, $\mathcal{N}_j(s,\alpha)$ and $\mathcal{N}(s,\alpha)$ as before with $f$ replaced with $f_1$.

Assume $k>0$ and let $r>1$ be the smallest integer such that $k>2/r$. Write 
$$
L(s)=\sum_{j=1}^d b_jL(s,f_j).
$$
Consider the quantity 
$$
\mathcal{I}(T)=\int_{T}^{2T}\abs{ L(1/2+it)}^{2/r}\abs{\mathcal{N}(1/2+it,k-1/r)}^2\,dt.
$$
\Cref{thm:linearcombmultithm} then follows from the two lemmas below.

\begin{lem}\label{lem:linearcombmultithmNlem}
	Let $T>0$ be large. We have
	$$
	\int_T^{2T}\abs{\mathcal{N}\left(1/2+it,k-1/r\right)}^{\frac{2rk}{rk-1}}\,dt\ll T(\log T)^{N_1k^2}.
	$$
\end{lem}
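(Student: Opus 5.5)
This is \Cref{lem:multiNlem} with $f$ replaced by $f_1$. The object $\mathcal{N}(s,k-1/r)$ is built entirely from $f_1$, and the hypotheses on $f_1$ in \Cref{thm:linearcombmultithm} are exactly those on $f$ in \Cref{thm:multiplicativethm}. The only exception is meromorphic continuation, which plays no role here. Moreover $N_1=N$ by our relabelling. So I would repeat the proof of \Cref{lem:multiNlem} verbatim and check that every step uses only the following facts:
\begin{itemize}
\item the definitions of $T_j,\mathcal{P}_j,P_j,K_j,\mathcal{N}_j$;
\item the bound $f_1(p)\ll p^\theta$, which follows from $f_{1,1/r}(n)\ll n^\theta$ with $r=1$;
\item the asymptotic $\sum_{p\le x}\abs{f_1(p)}^2/p=N_1\log\log x+O(1)$.
\end{itemize}

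The steps, in order, are as follows.
\begin{enumerate}
\item \Cref{lem:Npowerlem} holds for each $2\le j\le\ell$ with $f_1$ in place of $f$. Its proof is a pointwise case split on the size of $\abs{\mathcal{P}_j(1/2+it)}$ and uses only the truncated exponential approximation \eqref{approximation}.
\item \Cref{lem:Qlem} holds with $\int_T^{2T}\mathcal{Q}_j\ll Te^{-K_j}$. Since $T_j^{2K_j}\le T^{2/9}$, the mean value theorem for Dirichlet polynomials applies, together with the combinatorial bound $\sum_{\Omega(n)=2K_j}\abs{g(n)}^2/n\le P_j^{2K_j}/(2K_j)!$ and the relation $K_j=500(k+1)^2P_j$.
\item The splitting from \cite[p.~11]{HeapSoundararajan2022} factors $\int_T^{2T}\abs{\mathcal{N}}^{2rk/(rk-1)}$ into a product over $j$ of normalized averages, up to a constant. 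This is legitimate because the $\mathcal{N}_j$ involve disjoint prime ranges and the total length is at most $T^{1/9}$, so the mean value theorem makes the blocks asymptotically independent.
\item Bound each factor by $(1+O(e^{-K_j}))\prod_{T_{j-1}<p\le T_j}\bigl(1+k^2\abs{f_1(p)}^2/p+O(\abs{f_1(p)}^2p^{2\theta-2})\bigr)$. Multiplying over $j$ gives the total.
\item Finally, $\prod_j(1+O(e^{-K_j}))\ll1$, because $K_j\gg P_j$ grows like $\log_{j-1}T$ minus lower-order terms, so $\sum_j e^{-K_j}$ converges. This needs $C_{f_1,\ldots,f_d}$ large, exactly as before. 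The Euler product is then $\ll\exp\bigl(k^2\sum_{p\le T}\abs{f_1(p)}^2/p\bigr)\ll(\log T)^{N_1k^2}$, since $\sum_p\abs{f_1(p)}^2p^{2\theta-2}<\infty$ as $\theta<1/2$.
\end{enumerate}

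I expect no step to be a real obstacle. The only point needing care is confirming that nothing in the earlier proof used the functional properties of $L(s,f)$, in particular the pole or the growth bound. It did not: \Cref{lem:multiNlem} concerns only the Dirichlet polynomial $\mathcal{N}$. The other hypotheses of \Cref{thm:linearcombmultithm}, namely the cross-correlation bound with constant $B$ and the behaviour of $f_2,\dots,f_d$, are not needed here; they enter only in the companion lower-bound lemma for $\mathcal{I}(T)$.
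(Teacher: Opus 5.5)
Your proposal is correct and is exactly the paper's route: the paper proves this lemma only by saying it follows as \Cref{lem:multiNlem} does with $f$ replaced by $f_1$, and your check that the argument uses only the definitions of the $\mathcal{N}_j$, the bound $f_1(p)\ll p^\theta$ and the asymptotic for $\sum_{p\le x}\abs{f_1(p)}^2/p$ is what justifies that. Two harmless slips: after the pointwise bound, the Dirichlet polynomials have length up to $\prod_j T_j^{2K_j}\le T^{2/9}$ because of the $\mathcal{Q}_j$, not $T^{1/9}$, and $P_j$ grows like $\log_j T$ rather than $\log_{j-1}T$; neither affects the conclusion.
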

\begin{lem}\label{lem:linearcombmultithmmainlem}
	Let $T>0$ be large. We have
	$$
	\mathcal{I}(T)\gg T(\log T)^{N_1k^2}.
	$$
\end{lem}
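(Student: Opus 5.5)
The plan is to run the argument of \Cref{lem:multimainlem} with $L(s)=\sum_j b_jL(s,f_j)$ in place of $L(s,f)$. The single new ingredient is the Dirichlet polynomial approximating $L(s)^{1/r}$. Since $L(s)$ has no Euler product, I would not take an $r$-th root of it. Instead I would use the mollified approximation
$$
S(s)=b_1^{1/r}\sum_{n\le N}\frac{f_{1,1/r}(n)}{n^s},\qquad N=T^{1/2},
$$
which approximates only the dominant component $b_1L(s,f_1)^{1/r}$. I then set $E(s)=L(s)-S(s)^r$. For $\Re s>1$ we have
$$
E(s)=\sum_{j\ge 2}b_jL(s,f_j)+\sum_{n>N}a_nn^{-s},
$$
with $a_n\ll_\eta n^\eta$. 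With $w$, $\mathcal{J}$, $\mathcal{K}$, $\mathcal{L}$ defined as before, the inequalities \eqref{eq:I} and \eqref{eq:II} still hold verbatim.

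The convexity estimate for $\mathcal{J}(\sigma)$ in \Cref{lem:JKlem} carries over unchanged. One multiplies by $\prod_j(z-1)^{m_j}$ to kill the poles and uses $L(3/2+it)\ll 1$. The estimate for $\mathcal{K}(\sigma)$ needs care, because $E$ now contains the full $L(s,f_j)$ for $j\ge2$, and these are not small on $\Re s=2\beta-1/2$. So I would not bound $\mathcal{K}$ alone. Instead I would split
$$
L(s)=b_1L(s,f_1)+R(s),\qquad R=\sum_{j\ge2}b_jL(s,f_j),
$$
and show that the cross terms are negligible against the mollifier-weighted main term. Concretely, I would redefine the comparison so that $\mathcal{L}(\sigma)$ uses $S(s)^r$ together with $R(s)$, that is, take $\mathcal{L}(\sigma)=\int\abs{S^r+R}^{2/r}\abs{\mathcal{N}}^2w$. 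For $\Re s\ge 1/2+c/\log T$ I would compare this with $\int\abs{S}^2\abs{\mathcal{N}}^2w$.

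The key point, and the main obstacle, is to show that $\int\abs{R(\sigma+it)}^{2/r}\abs{\mathcal{N}(\sigma+it,k-1/r)}^2\,dt$ is of smaller order than $T(\sigma-1/2)^{-N_1k^2}$, and likewise that the contribution of $R$ to the lower bound for $\mathcal{L}(\sigma)$ is negligible. Here the hypothesis
$$
\Re\sum_{p\le x}\frac{f_1(p)\overline{f_j(p)}}{p^{1+it}}\le B\log(2+\abs{t})
$$
enters. I would pass from $\abs{R}^{2/r}$ to $\abs{R_r}^2$, with $R_r$ a short Dirichlet polynomial approximating $L(s,f_j)^{1/r}$, by the same Gabriel convexity device used for $E$. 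Then the mean value theorem gives a diagonal sum of the form
$$
\prod_p\Bigl(1+\frac{\abs{f_j(p)}^2/r^2+2\Re\bigl((k-1/r)f_j(p)\overline{f_1(p)}/r\bigr)+k^2\abs{f_1(p)}^2}{p^{2\sigma}}\Bigr),
$$
with $f_j$ in place of $f_1$ in the first factor. The size of this product is governed by the correlation sums. Provided the decorrelation bound is actually strong enough to make the cross sum $o(\log\log)$ on average, the product is $\ll (\sigma-1/2)^{-(N_1k^2-\delta)}$ for some $\delta>0$, and so it is genuinely smaller. I expect that establishing this saving from the stated hypothesis, which involves controlling a twisted sum evaluated at $t=0$ in the diagonal, is the delicate step. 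If the hypothesis only gives $O(1)$ per dyadic block, I would instead use the freedom to multiply $\mathcal{N}$ by a fixed character of the form $p^{-i t_0}$ to separate the components.

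Given this, the conclusion follows as in \Cref{lem:multimainlem}. If $\mathcal{K}(1/2)\le T$ we are done directly. Otherwise, combining \Cref{lem:Llem} (with $N_1$) and the convexity bounds at $\sigma=1/2+c/\log T$ with $c$ large forces $\mathcal{J}(1/2)\gg T(\log T)^{N_1k^2}$, and removing the weight $w$ yields \Cref{lem:linearcombmultithmmainlem}.
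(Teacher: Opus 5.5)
You correctly see that $E$ cannot contain the full $L(s,f_j)$, $j\ge2$. However, moving $R(s)=\sum_{j\ge2}b_jL(s,f_j)$ into $\mathcal{L}$ only relocates the problem, and this is a genuine gap. The endgame of \Cref{lem:multimainlem} needs two-sided control of $\mathcal{L}$: lower bounds for $\mathcal{L}(1/2)$ and $\mathcal{L}(1/2+c/\log T)$, and the upper bound $\mathcal{L}(1/2)\ll T(\log T)^{N_1k^2}$. With your $\mathcal{L}$, all three require an upper bound for $\int\abs{L(\sigma+it,f_j)}^{2/r}\abs{\mathcal{N}(\sigma+it,k-1/r)}^2\,dt$ at and near $\sigma=1/2$. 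That is a twisted moment of $L(s,f_j)$, which is exactly what the method is designed to avoid.

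The Gabriel device cannot supply this bound. \Cref{lem:JKlem} bounds $\mathcal{K}(\sigma)$ only through powers of $\mathcal{K}(1/2)$, and $\mathcal{K}(1/2)$ is never estimated; it is eliminated via \eqref{eq:II} in favour of $\mathcal{J}(1/2)+\mathcal{L}(1/2)$. That works only because $E$ is the error for the very function whose $\mathcal{J}(1/2)$ is being bounded below. For $L(s,f_j)-R_r(s)^r$ the analogous quantity is $\int\abs{L(1/2+it,f_j)}^{2/r}\abs{\mathcal{N}}^2w\,dt$, which is unknown and unrelated to $\mathcal{I}(T)$.

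The paper instead approximates every component. It takes $S_j(s)=\sum_nf_{j,1/r}(n)n^{-s}v(n/T^{1/2})$ for each $j$ and sets $E=L-\sum_jb_jS_j^r$. For $\Re s>1$ this $E$ is a Dirichlet series supported on $n>T^{1/2}$, so the $\mathcal{K}$-convexity of \Cref{lem:JKlem} goes through verbatim. It then builds $\mathcal{L}$ from $\sum_jb_jS_j^r$. Subadditivity of $x\mapsto x^{2/r}$ reduces $\mathcal{L}$ to the pure Dirichlet polynomial mean values $\int\abs{S_j}^2\abs{\mathcal{N}}^2$ of \Cref{lem:diagonalmeanvalue} and \Cref{lem:offdiagonalmeanvalue}.

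Your off-diagonal computation is also wrong in two places. First, the coefficient of $\abs{f_1(p)}^2$ in the Euler product for $\abs{S_j\mathcal{N}(\cdot,k-1/r)}^2$ is $(k-1/r)^2$, not $k^2$. With $k^2$ the exponent would exceed $N_1k^2$ even if the cross sum were $o(\log\log T)$. With $(k-1/r)^2$ the saving is automatic, since $N_j\le N_1$ and $k>2/r$ give $N_j/r^2+N_1(k-1/r)^2\le N_1(k^2-2k/r+2/r^2)<N_1k^2$. So the decorrelation hypothesis is needed only to keep the cross term $\frac{2}{r}(k-1/r)\Re\sum_pf_j(p)\overline{f_1(p)}p^{-1-z}$ at most $O(\log(2+\abs{\Im z}))$. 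Nothing at $t=0$ is delicate, and no twist by $p^{-it_0}$ is needed.

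Second, with the sharp cutoff $n\le N$ the coefficients of $S_j\mathcal{N}$ do not factor over primes. The triangle-inequality route of \Cref{lem:Llem} then replaces $\Re(f_j\overline{f_1})$ by $\abs{f_j}\abs{f_1}$, which for Dirichlet characters gives back the exponent $k^2$. This is why the paper uses the smooth weight $v$: Mellin inversion plus Minkowski's inequality in $z$ restores the factorization while keeping the sign of the cross term. The polynomial loss in $\abs{\Im z}$ is then absorbed by the decay of $V(z)$.
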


\Cref{lem:linearcombmultithmNlem} follows similarly to \Cref{lem:multiNlem}. To prove \Cref{lem:linearcombmultithmmainlem} we argue as follows. Let $u:[0,\infty)\to [0,1]$ be a smooth function supported inside $[1,2]$ and $\int_0^\infty u(t)dt=1$. Let 
$$
v(x)=\int_x^\infty u(t)dt
$$
and 
$$
V(z)=\frac{1}{z}\int_0^\infty u(t)t^{z}dt.
$$
Then, for all $c>0$ we have 
$$
v(x)=\frac{1}{2\pi i}\int_{(c)}V(z)x^{-z}dz.
$$
Let 
$$
S_j(s)=\sum_{n=1}^\infty\frac{f_{j,1/r}(n)}{n^s}v(n/T^{1/2}).
$$
Let
$$
E(s)=L(s)-\sum_{j=1}^d b_jS_j(s)^r.
$$
Let
$$
\mathcal{J}(\sigma)=\int_{-\infty}^{\infty}\abs{L(\sigma+it)}^{2/r}\abs{\mathcal{N}(\sigma+it,k-1/r)}^2w(t)\,dt,
$$
$$
\mathcal{K}(\sigma)=\int_{-\infty}^{\infty}\abs{E(\sigma+it)}^{2/r}\abs{\mathcal{N}(\sigma+it,k-1/r)}^2w(t)\,dt,
$$
$$
\mathcal{L}(\sigma)=\int_{-\infty}^{\infty}\abs{\sum_{j=1}^d b_jS_j(\sigma+it)^r}^{2/r}\abs{\mathcal{N}(\sigma+it,k-1/r)}^2w(t)\,dt.
$$
Following the proof of \Cref{lem:JKlem} we get that $\mathcal{J}(\sigma)$ and $\mathcal{K}(\sigma)$ satisfy the following convexity estimates.
\begin{lem}\label{lem:JK1lem}
	Let $1/2\le \sigma\le 3/4$ and $T>0$ be large. Then 
	$$
	\mathcal{J}(\sigma)\ll T^{\sigma-1/2}\mathcal{J}(1/2)^{3/2-\sigma}+e^{-T^2/(4r)},
	$$
	and for $\theta<\eta<1/2$,
	$$
	\mathcal{K}(\sigma)\ll_\eta \mathcal{K}(1/2)^{\frac{5+2\eta-4\sigma}{3+2\eta}} T^{\frac{4\sigma-2}{3+2\eta}} T^{\frac{2\eta-3}{2r}\frac{2\sigma-1}{3+2\eta}}  + \mathcal{K}(1/2)^{\frac{7+2\eta-8\sigma}{3+2\eta}} e^{-\frac{T^2}{2r}\frac{2\sigma-1}{3+2\eta}}.
	$$
\end{lem}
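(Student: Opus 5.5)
The plan is to repeat the proof of \Cref{lem:JKlem} almost verbatim, with $L(s,f)$ replaced by $L(s)=\sum_j b_jL(s,f_j)$ and $E(s)=L(s)-\sum_j b_jS_j(s)^r$. The two places needing comment are the analytic properties of the test functions and the mean value estimate for $E$ on the line $2\beta-1/2$.

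For the estimate on $\mathcal{J}(\sigma)$, I apply \Cref{lem:gabriel1} to
$$
F(z)=(z-1)^{m}L(z)\mathcal{N}(z,k-1/r)^r\exp((z-i\tau)^2),
$$
with $\alpha=1/2$, $\beta=3/2$, $\gamma=\sigma$, $q=2/r$ and $T\le\tau\le 2T$. Here $m$ is the maximum of the pole orders of the $L(s,f_j)$ at $s=1$. The polynomial bound $L(s,f_j)=O(\abs{t}^A)$ and the Gaussian factor make $F$ decay uniformly in the strip. On $\sigma=3/2$ the absolutely convergent series give $\abs{L(3/2+it)}\ll 1$, and $\mathcal{N}(3/2+it,k-1/r)\ll 1$ as well. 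The remaining steps, namely truncating to $[\tau/2,3\tau/2]$, dividing by $\tau^{2m/r}$ and integrating over $\tau$, are identical to the earlier proof. This yields $\mathcal{J}(\sigma)\ll T^{\sigma-1/2}\mathcal{J}(1/2)^{3/2-\sigma}+e^{-T^2/(4r)}$.

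For $\mathcal{K}(\sigma)$ I use
$$
F(z)=E(z)\mathcal{N}(z,k-1/r)^r\exp((z-i\tau)^2).
$$
Each $S_j$ is now a smoothed Dirichlet series supported on $n\le 2T^{1/2}$, since $v(x)=0$ for $x\ge2$. It is therefore entire and polynomially bounded, so $E$ is holomorphic in $1/2\le\Re z\le\beta$ with $\beta=7/8+\eta/4<1$ and of polynomial growth. The chain \eqref{eq:K1}--\eqref{eq:K4}, built from \Cref{lem:gabriel1} and the rectangle argument of \Cref{lem:gabriel2}, then goes through unchanged. It reduces the problem to bounding $\int_{T/2}^{3T}\abs{E(2\beta-1/2+it)}^{2/r}\,dt$.

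The main obstacle is this last mean value bound. Because $v(n/T^{1/2})=1$ for $n\le T^{1/2}$, each $S_j$ agrees with $L(s,f_j)^{1/r}$ on all $n\le T^{1/2}$. For $\Re s>1$ this gives $E(s)=\sum_j b_j\big(L(s,f_j)-S_j(s)^r\big)=\sum_{n>N}a_nn^{-s}$, where $N=T^{1/2}$ and $a_n\ll_\eta n^\eta$. The coefficients are controlled by the bound $f_{j,1/r}(n)\ll n^\theta$, the fact that $0\le v\le 1$, and a divisor bound.

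However, $2\beta-1/2=5/4+\eta/2>1$, so on that line $E$ is itself an absolutely convergent Dirichlet series. Expanding $S_j^r$, the tail past $N$ contributes only $n>N$, and the mean value theorem \cite[Cor.~3]{meanvalue} gives
$$
\int_{T/2}^{3T}\abs{E(2\beta-1/2+it)}^2\,dt\ll_\eta TN^{\eta-3/2}.
$$
Hölder's inequality then gives $\int\abs{E}^{2/r}\ll T\,N^{(\eta-3/2)/r}$. Substituting $N=T^{1/2}$ into \eqref{eq:K4} with the exponent $(\sigma/2-1/4)/(\beta-1/2)$ produces exactly the factor $T^{\frac{2\eta-3}{2r}\frac{2\sigma-1}{3+2\eta}}$ in the statement, and the second term carries over as before.
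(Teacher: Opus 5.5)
Your proposal is correct and follows the paper's route: the paper justifies this lemma only by saying it follows from the proof of \Cref{lem:JKlem}, and you rerun that argument with $L(s)=\sum_j b_jL(s,f_j)$ and pole order $m=\max_j m_j$. You also spell out what the paper leaves implicit. Because $v\equiv 1$ on $[0,1]$, $v\equiv 0$ on $[2,\infty)$ and $0\le v\le 1$, one gets $E(s)=\sum_{n>T^{1/2}}a_n n^{-s}$ with $a_n\ll_\eta n^\eta$ on $\Re s=2\beta-1/2$, and your exponent bookkeeping with $N=T^{1/2}$ reproduces the stated bound.
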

Now it remains to estimate $\mathcal{L}(\sigma)$. Applying the inequality $\abs{x+y}^{2/r}\gg \abs{x}^{2/r}-\abs{y}^{2/r}$, we get 
\begin{align*}
	\mathcal{L}(\sigma)&\gg \int_{4T/3}^{5T/3}\abs{\sum_{j=1}^d b_jS_j(\sigma+it)^r}^{2/r}\abs{\mathcal{N}(\sigma+it,k-1/r)}^2\,dt
	\\
	&\gg {\abs{b_1}}^{2/r}\int_{4T/3}^{5T/3}\abs{ S_{1}(\sigma+it)}^{2}\abs{\mathcal{N}(\sigma+it,k-1/r)}^2\,dt
	\\
	&\qquad\qquad\qquad-\sum_{j=2}^d{\abs{b_j}}^{2/r}\int_{4T/3}^{5T/3}\abs{ S_j(\sigma+it)}^{2}\abs{\mathcal{N}(\sigma+it,k-1/r)}^2\,dt.
\end{align*}
Also, we have
\begin{align*}
	\mathcal{L}(\sigma)&\ll\sum_{j=1}^d\int_{0}^{3T}\abs{ S_j(\sigma+it)}^{2}\abs{\mathcal{N}(\sigma+it,k-1/r)}^2\,dt.
\end{align*}

Following the argument in the proof of \Cref{lem:Llem}, we get the following lemma.

\begin{lem}\label{lem:diagonalmeanvalue}
	Let $T>0$ be large. We have
	$$
	\int_{T}^{2T}\abs{ S_{1}(\sigma+it)}^{2}\abs{\mathcal{N}(\sigma+it,k-1/r)}^2\,dt\asymp T(\sigma-1/2)^{-N_1k^2}
	$$
	uniformly for 
	$$
	\frac{1}{2}+\frac{1}{\log T}\le \sigma\le 1.
	$$
	Moreover,
	$$
	\int_{T}^{2T}\abs{ S_{1}(1/2+it)}^{2}\abs{\mathcal{N}(1/2+it,k-1/r)}^2\,dt\asymp T(\log T)^{N_1k^2}.
	$$
\end{lem}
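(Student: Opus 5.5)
The plan is to repeat the proof of \Cref{lem:Llem}, with $S(s)$ replaced by the smoothed polynomial $S_1(s)$, whose coefficients are $f_{1,1/r}(n)v(n/T^{1/2})$. Since $u$ is supported in $[1,2]$, $v(x)=1$ for $x\le 1$, $v(x)=0$ for $x\ge 2$, and $0\le v\le 1$. Hence $S_1$ is a Dirichlet polynomial of length at most $2T^{1/2}$, and $S_1(s)\mathcal{N}(s,k-1/r)$ has length at most $2T^{1/2}T^{1/9}\le T^{2/3}$. The mean value theorem \cite[Cor. 3]{meanvalue} then gives
$$
\int_T^{2T}\abs{S_1(\sigma+it)}^2\abs{\mathcal{N}(\sigma+it,k-1/r)}^2\,dt\asymp T\sum_l\frac{\abs{a(l)}^2}{l^{2\sigma}},
$$
where now
$$
a(l)=\sum_{mn=l,\ n\in\mathcal{N}} f_{1,1/r}(m)\,v(m/T^{1/2})\,(k-1/r)^{\Omega(n)}g(n).
$$
The error term $\sum_l l\abs{a(l)}^2l^{-2\sigma}$ is at most a factor $T^{2/3}/T$ smaller than the main term, so the comparison is genuinely two-sided.

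For the upper bound I will use $0\le v\le 1$ together with the triangle inequality, and then drop the constraint on the size of $m$ beyond $p\mid m\Rightarrow p\le 2T^{1/2}\le T$. This gives exactly the same Euler product majorant as in \Cref{lem:Llem}:
$$
\prod_{p\le T}\left(1+k^2\abs{f_1(p)}^2p^{-2\sigma}+E_p(\sigma)\right).
$$
The bound $\sum_pE_p(1/2)<\infty$ is proved verbatim from the hypotheses on $f_{1,1/r}$. The Abel summation step, run with $N_1$ in place of $N_f$, then yields $\ll T(\sigma-1/2)^{-N_1k^2}$. For $\sigma=1/2$ one uses instead $\sum_{p\le T}\abs{f_1(p)}^2/p=N_1\log\log T+O(1)$, which gives $\ll T(\log T)^{N_1k^2}$.

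For the lower bound I again restrict to the set $\mathcal{S}$ of squarefree $l=mn$ with $m$ composed of primes $\le T_1$, $m\le T_1$, and $n\in\mathcal{N}$ squarefree. For such $l$, since $m\le T_1<T^{1/2}$, we have $v(m/T^{1/2})=1$, so the smoothing does not interfere. Because the primes dividing $m$ and $n$ lie in disjoint ranges, the factorization $l=mn$ is unique, and $\abs{a(l)}^2=\abs{f_1(m)}^2r^{-2\Omega(m)}\cdot k^{2\Omega(n)}\abs{f_1(n)}^2$, exactly as before. The sum over $m$ is $\ge1$ (from $m=1$). It remains to bound from below each factor
$$
\sum_{n\in\mathcal{N}_j,\ \mu^2(n)=1}\frac{k^{2\Omega(n)}\abs{f_1(n)}^2}{n^{2\sigma}}.
$$

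This is the main obstacle: showing the truncation $\Omega(n)\le K_j$ costs only a factor $1+O(e^{-K_j})$ relative to $\prod_{T_{j-1}<p\le T_j}(1+k^2\abs{f_1(p)}^2p^{-2\sigma})$. I will handle it by Rankin's trick. The discarded terms are bounded by
$$
e^{-K_j}\prod_{T_{j-1}<p\le T_j}\left(1+e\,k^2\abs{f_1(p)}^2p^{-2\sigma}\right)\le e^{-K_j}\exp\bigl((e-1)k^2P_j\bigr)\prod_{T_{j-1}<p\le T_j}\left(1+k^2\abs{f_1(p)}^2p^{-2\sigma}\right).
$$
This is negligible since $K_j=500(k+1)^2P_j$. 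Here $P_j\gg\log_j T$ is large for $j\le\ell$ by the choice of $C_{f_1,\ldots,f_d}$, so $\prod_j(1-O(e^{-K_j}))\gg1$. Finally, $\prod_p(1+x_p)\gg\exp(\sum x_p)$ holds because $\sum_p\abs{f_1(p)}^4/p^2<\infty$ (using $\abs{f_1(p)}\ll p^\theta$). The primes $p\le T_1$ and those in $(T_\ell,T]$ contribute only $O(1)$ to $\sum_p\abs{f_1(p)}^2/p^{2\sigma}$, since $\log\log T-\log\log T_\ell=O(1)$. Combining this with the Abel summation evaluation gives the matching lower bounds $\gg T(\sigma-1/2)^{-N_1k^2}$ and $\gg T(\log T)^{N_1k^2}$.
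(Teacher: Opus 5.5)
Your proposal follows the paper's route, which simply reruns the proof of Lemma~\ref{lem:Llem} with $S$ replaced by the smoothed $S_1$. Your Rankin-trick bound for the truncation $\Omega(n)\le K_j$ correctly supplies the lower-bound step that the paper asserts without proof.

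One justification in the lower bound needs correcting. In the sum defining $a(l)$, the factorization $l=m'n'$ with $n'\in\mathcal{N}$ is \emph{not} unique. For $l=mn\in\mathcal{S}$, every divisor $n'\mid n$ also lies in $\mathcal{N}$, and it contributes a term with $m'=mn/n'$.

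The formula $\abs{a(l)}^2=\abs{f_1(m)}^2r^{-2\Omega(m)}k^{2\Omega(n)}\abs{f_1(n)}^2$ comes precisely from summing over all these divisors: each $p\mid n$ contributes $f_{1,1/r}(p)+(k-1/r)f_1(p)=kf_1(p)$. If the factorization really were unique, you would get $(k-1/r)^{2\Omega(n)}$ instead, and hence the wrong exponent.

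For the same reason, it is not enough to check $v(m/T^{1/2})=1$. You need $v(m'/T^{1/2})=1$ for every such $m'$. This does hold, since $m'\le T_1T^{1/9}<T^{1/2}$.
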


We now proceed to bound the "off-diagonal" integrals.

\begin{lem}\label{lem:offdiagonalmeanvalue}
	Let $T>0$ be large. For $j\ge 2$ we have
	$$
	\int_{T}^{2T}\abs{ S_j(\sigma+it)}^{2}\abs{\mathcal{N}(\sigma+it,k-1/r)}^2\,dt\ll T(\log T)^{N_j/r^2+N_1(k-1/r)^2}(\log\log T)^2,
	$$
	uniformly for $1/2\le \sigma\le 1.$
\end{lem}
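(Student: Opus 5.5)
Since $S_j(s)\mathcal{N}(s,k-1/r)$ is a Dirichlet polynomial of length at most $2T^{1/2}\cdot T^{1/9}=o(T)$, the approach is the mean value theorem \cite[Cor.~3]{meanvalue}, exactly as in the proof of \Cref{lem:Llem}. Writing
$$
S_j(s)\mathcal{N}(s,k-1/r)=\sum_l \frac{a_j(l)}{l^s},\qquad a_j(l)=\sum_{\substack{mn=l\\ n\in\mathcal{N}}} f_{j,1/r}(m)v(m/T^{1/2})(k-1/r)^{\Omega(n)}g(n),
$$
this gives
$$
\int_T^{2T}\abs{S_j(\sigma+it)}^2\abs{\mathcal{N}(\sigma+it,k-1/r)}^2\,dt\ll T\sum_l\frac{\abs{a_j(l)}^2}{l^{2\sigma}}\le T\sum_l\frac{\abs{a_j(l)}^2}{l},
$$
uniformly in $\sigma\ge1/2$. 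It therefore suffices to bound $\sum_l\abs{a_j(l)}^2/l$. Here $g$ is built from $f_1$, so $n$ runs over integers whose prime factors lie in $(T_1,T_\ell]$, while $m\le 2T^{1/2}$.

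The naive approach, using $0\le v\le1$, the triangle inequality and multiplicativity as in \Cref{lem:Llem}, produces an Euler product over $p\le T$ whose linear factor is
$$
1+\frac{\abs{f_{j,1/r}(p)+(k-1/r)f_1(p)}^2}{p}.
$$
Expanding the square gives three terms: $\abs{f_j(p)}^2/(r^2p)$, which contributes $(\log T)^{N_j/r^2}$; $(k-1/r)^2\abs{f_1(p)}^2/p$, which contributes $(\log T)^{N_1(k-1/r)^2}$; and the cross term $2(k-1/r)\Re\bigl(f_j(p)\overline{f_1(p)}\bigr)/(rp)$. The cross term is where the hypothesis of \Cref{thm:linearcombmultithm} with $t=0$ enters: summed over $p\le x$ it is at most $B\log 2$, so its exponential is $O(1)$. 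The higher prime powers are handled exactly as the error terms $E_p$ in \Cref{lem:Llem}, using the bounds $f_{j,1/r}(n)\ll n^\theta$ and the finiteness of $\sum_p\sum_{m\ge2}\abs{f_{j,1/r}(p^m)}^2/p^m$.

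The main obstacle is the first step: the triangle inequality destroys the phases, so it cannot be applied before isolating the cross term. With absolute values the cross term becomes $\abs{f_j(p)}\abs{f_1(p)}/p$, which Cauchy--Schwarz only bounds by a positive power of $\log T$. I would therefore not take absolute values at the primes. Instead I would remove the smooth cutoff $v(m/T^{1/2})$ using Mellin inversion,
$$
S_j(s)=\frac{1}{2\pi i}\int_{(c)}V(z)T^{z/2}L(s+z,f_j)^{1/r}\,dz,
$$
and work with the completely multiplicative-type Euler product on the diagonal. The diagonal sum $\sum_l\abs{a_j(l)}^2/l$ then becomes a double Mellin integral in $z_1,z_2$ of an Euler product of the form $\prod_p\bigl(1+\abs{f_{j,1/r}(p)+(k-1/r)f_1(p)}^2p^{-1-\ldots}+\cdots\bigr)$, with imaginary shifts $\Im(z_1-z_2)$ appearing in the twisted cross terms. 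I would take $\Re z_i=1/\log T$, so that $T^{z/2}$ is bounded, and truncate $\abs{\Im z_i}\le(\log T)^2$ using the rapid decay of $V$. At height $t$ the cross terms contribute at most $\exp\bigl(O(B\log(2+\abs{t}))\bigr)$. Because the twists can be as large as $(\log T)^2$, this yields a loss of a power of $\log\log T$ rather than a bounded factor, which I expect to account for the $(\log\log T)^2$ in the statement.

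The constraint $n\in\mathcal{N}$, namely $\Omega(n_i)\le K_i$, is handled by dropping it after bounding each coefficient in absolute value by the unrestricted Euler factor. This is harmless because the $\Omega$-restriction only removes terms from a sum that is otherwise controlled. Finally, the $\log T$ scale of the pieces is fixed by the hypothesis $\sum_{p\le x}\abs{f_i(p)}^2/p=N_i\log\log x+O(1)$, evaluated as in the Abel summation at the end of \Cref{lem:Llem}.
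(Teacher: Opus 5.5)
Your framework is the right one and matches the paper: the mean value theorem, then Mellin inversion of $v(m/T^{1/2})$ so that the sum over $mn=l$ becomes multiplicative, while keeping the phase of $f_j(p)\overline{f_1(p)}p^{-z}$ so that the hypothesis of \Cref{thm:linearcombmultithm} applies at $t=\Im z$. Do the inversion coefficientwise; your formula with $L(s+z,f_j)^{1/r}$ on $\Re z=1/\log T$, $\sigma=1/2$ is not justified, since that series need not converge absolutely there.

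\textbf{First gap: the $\Im z$-dependence.} Your bookkeeping here is off by an exponential. If you truncate at $|\Im z_i|\le(\log T)^2$ and bound the cross-term factor by its maximum there, you get $\exp(O(B\log(2+(\log T)^2)))=(\log T)^{O(B)}$. That is a power of $\log T$, not of $\log\log T$. It changes the exponent $N_j/r^2+N_1(k-1/r)^2$, and the saving $N_1k^2-N_j/r^2-N_1(k-1/r)^2$ that \Cref{lem:L1lem} relies on can be smaller than such a multiple of $B$. So no supremum over $\Im z$ may be taken. Instead, integrate the factor $(2+|\Im z|)^{B(k-1/r)/r}$ against $|V(z)|$, which decays faster than any power of $|\Im z|$ because $u$ is smooth; large heights then contribute only $O(1)$. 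The $\log\log T$ comes from the pole $V(z)=1/z+O(1)$ at $z=0$. On $\Re z=c=1/\log T$ one has $\int_{-1}^{1}|c+iy|^{-1}\,dy\asymp\log\log T$, once for each Mellin integral, hence the square.

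\textbf{Second gap: the $\Omega$-restriction.} ``The restriction only removes terms'' is not a valid monotonicity principle once phases are kept, because a partial sum of complex terms can exceed the full sum. ``Bounding each coefficient in absolute value'' discards exactly the phase you need in the cross term. In your double-integral formulation the block sums $\sum_l l^{-1}A_{z_1}(l)\overline{A_{z_2}(l)}$, with $A_z(l)=\sum_{mn=l,\,n\in\mathcal N}f_{j,1/r}(m)m^{-z}(k-1/r)^{\Omega(n)}g(n)$, are complex and may be small. So no relative-error argument is available.

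The missing step is to pass to a nonnegative quantity first. Apply Minkowski's integral inequality to the single Mellin integral for $a_j(l)$ (equivalently, Cauchy--Schwarz in $l$ inside your double integral). This gives $(\sum_l|a_j(l)|^2/l)^{1/2}\ll\int_{(c)}|V(z)|F(z)^{1/2}\,|dz|$ with $F(z)=\sum_{p\mid l\Rightarrow p\le T}l^{-1}|A_z(l)|^2$. Then:
\begin{itemize}
\item $F(z)$ factors over the blocks $(T_{i-1},T_i]$.
\item Each unrestricted block factor is a sum of nonnegative terms including $l=1$, hence $\ge 1$.
\item The Rankin-trick argument of Heap and Soundararajan then shows that dropping $\Omega(n_i)\le K_i$ costs only a factor $1+O(e^{-300P_i})$.
\item The Euler product gives $F(z)\ll(\log T)^{N_j/r^2+N_1(k-1/r)^2}(2+|\Im z|)^{2B(k-1/r)/r}$, via the hypothesis at $t=\Im z$ after the harmless shift by $c$. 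Only one imaginary part ever appears.
\end{itemize}
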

\begin{proof}[Proof of \Cref{lem:offdiagonalmeanvalue}]
Write 
$$
S_j(s)\mathcal{N}(s,k-1/r)=\sum_{l}\frac{a(l)}{l^s}
$$
where 
$$
a(l)=\sum_{\substack{mn=l\\ n\in\mathcal{N}}}f_{j,1/r}(m)v(m/T^{1/2})(k-1/r)^{\Omega(n)}g(n).
$$
By the mean value theorem \cite[Cor. 3]{meanvalue}, we have 
$$
\int_{T}^{2T}\abs{ S_j(\sigma+it)}^{2}\abs{\mathcal{N}(\sigma+it,k-1/r)}^2\,dt\ll T\sum_{p\vert l\implies p\le T}\frac{\abs{a(l)}^2}{l}.
$$
We note that for $c=1/\log T$ we have
$$
a(l)=\frac{1}{2\pi i}\int_{(c)}V(z)T^{z/2}\sum_{\substack{mn=l\\ n\in\mathcal{N}}}\frac{f_{j,1/r}(m)}{m^z}(k-1/r)^{\Omega(n)}g(n)dz.
$$
By Minkowski's integral inequality we get
\begin{align*}
&\left(\sum_{p\vert l\implies p\le T}\frac{\abs{a(l)}^2}{l}\right)^{1/2} \\
&\ll \int_{(c)}\abs{V(z)}\left(\sum_{p\vert l\implies p\le T}\frac{1}{l}\abs{\sum_{\substack{mn=l\\ n\in\mathcal{N}}}\frac{f_{j,1/r}(m)}{m^z}(k-1/r)^{\Omega(n)}g(n)}^2\right)^{1/2}\abs{dz}.
\end{align*}
We have the following factorization,
\begin{align*}
 \sum_{p\vert l\implies p\le T}\frac{1}{l}&\abs{\sum_{\substack{mn=l\\ n\in\mathcal{N}}}\frac{f_{j,1/r}(m)}{m^z}(k-1/r)^{\Omega(n)}g(n)}^2 
 \\
 &=\left(\sum_{p\vert l\implies p\le T_1}\frac{\abs{f_{j,1/r}(l)}^2}{l^{1+2c}}\right)\left(\sum_{p\vert l\implies T_\ell < p\le T}\frac{\abs{f_{j,1/r}(l)}^2}{l^{1+2c}}\right)
 \\
 &\qquad\qquad\qquad\times\prod_{j=2}^\ell\left(\sum_{p\vert l\implies T_{j-1}<p\le T_j}\frac{1}{l}\abs{\sum_{\substack{mn=l\\ n\in\mathcal{N}_j}}\frac{f_{j,1/r}(m)}{m^z}(k-1/r)^{\Omega(n)}g(n)}^2 \right)
\end{align*}
Following the argument of Heap and Soundararajan \cite[p. 6]{HeapSoundararajan2022}, we get
\begin{align*}
	\prod_{j=2}^\ell&\left(\sum_{p\vert l\implies T_{j-1}<p\le T_j}\frac{1}{l}\abs{\sum_{\substack{mn=l\\ n\in\mathcal{N}_j}}\frac{f_{j,1/r}(m)}{m^z}(k-1/r)^{\Omega(n)}g(n)}^2 \right)
	\\
	&=\prod_{j=2}^\ell\left((1+O(e^{-300P_j}))\sum_{p\vert l\implies T_{j-1}<p\le T_j}\frac{1}{l}\abs{\sum_{mn=l}\frac{f_{j,1/r}(m)}{m^z}(k-1/r)^{\Omega(n)}g(n)}^2 \right).
\end{align*}
Thus,
\begin{align*}
&\sum_{p\vert l\implies p\le T}\frac{1}{l} 
\abs{\sum_{\substack{mn=l\\ n\in\mathcal{N}}}\frac{f_{j,1/r}(m)}{m^z}(k-1/r)^{\Omega(n)}g(n)}^2 
	\\
 &\ll\prod_{T_\ell <p\le T}(\sum_{v=0}^\infty\frac{\abs{f_{j,1/r}(p^v)}^2}{p^{v(1+2c)}}) \\
& \prod_{j=2}^\ell (\sum_{p\vert l\implies T_{j-1}<p\le T_j}
\frac{1}{l}\abs{\sum_{mn=l}\frac{f_{j,1/r}(m)}{m^z}(k-1/r)^{\Omega(n)}g(n)}^2 ) \\
 &\ll  \exp( \frac{1}{r^2} \sum_{T_1< p \le T}\frac{\abs{f_j(p)}^2}{p}+ (k-\frac{1}{r})^2
 \sum_{T_1<p\le T_\ell}\frac{\abs{f_1(p)}^2}{p} 
  + \frac{2}{r} ( k-\frac{1}{r} ) \Re(\sum_{T_1 < p \le T_\ell } \frac{ f_j(p)\overline{f_1(p)} }{p^{1+z} } ))
  \\
 &\ll (\log T)^{N_j/r^2+N_1(k-1/r)^2}(2+\abs{\Im(z)})^{2B(k-1/r)/r},
\end{align*}
for some constant $B>0$. Hence, we get 
$$
\sum_{p\vert l\implies p\le T}\frac{\abs{a(l)}^2}{l}\ll (\log T)^{N_j/r^2+N_1(k-1/r)^2}\left(\int_{(c)}\abs{V(z)}(2+\abs{\Im(z)})^{B(k-1/r)/r}\abs{dz}\right)^2.
$$
We have the following estimates for $V(z)$ on the line $\Re(z)=c$,
$$
V(z)\ll\begin{cases}
    \frac{1}{\abs{z}} &\text{for }\abs{\Im(z)}\le 1,
	\\
	\frac{1}{\abs{z}^{B(k-1/r)/r+2}} &\text{for }\abs{\Im(z)}\ge 1.
\end{cases}
$$
Hence, 
\begin{align*}
	\int_{(c)}\abs{V(z)}(2+\abs{\Im{(z)}})^{B(k-1/r)/r}\abs{dz}
	&\ll \int_{-1}^1\frac{1}{\sqrt{c^2+y^2}}dy+\int_{-\infty}^{-1}\frac{1}{y^2}dy+\int_{1}^\infty\frac{1}{y^2}dy
	\\
	&\ll\log\log T.
\end{align*}
We conclude that
$$
\int_{T}^{2T}\abs{ S_j(\sigma+it)}^{2}\abs{\mathcal{N}(\sigma+it,k-1/r)}^2\,dt\ll T(\log T)^{N_j/r^2+N_1(k-1/r)^2}(\log\log T)^2.
$$
This finishes the proof of \Cref{lem:offdiagonalmeanvalue}. 
\end{proof}

We note that $T(\log T)^{N_j/r^2+N_1(k-1/r)^2}(\log\log T)^2=o(T(\log T /\log\log T)^{N_1k^2})$. Hence, we deduce from \Cref{lem:diagonalmeanvalue} and \Cref{lem:offdiagonalmeanvalue} the following lemma.
\begin{lem}\label{lem:L1lem}	
	Let $T>0$ be large. We have
	$$
	\mathcal{L}(\sigma)\asymp T(\sigma-1/2)^{-N_1k^2}
	$$
	uniformly for 
	$$
	\frac{1}{2}+\frac{1}{\log T}\le \sigma\le \frac{1}{2}+\frac{\log\log T}{\log T}.
	$$
	Moreover,
	$$
	\mathcal{L}(1/2)\asymp T(\log T)^{N_1k^2}.
	$$
\end{lem}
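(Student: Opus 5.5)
The plan is to combine the two-sided bounds displayed just before \Cref{lem:diagonalmeanvalue} with \Cref{lem:diagonalmeanvalue} and \Cref{lem:offdiagonalmeanvalue}. The diagonal term supplies the main term, and each off-diagonal term will turn out to be of strictly smaller order.

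\emph{Step 1: transfer the mean values to the windows that occur.} The lower bound for $\mathcal{L}(\sigma)$ is taken over $[4T/3,5T/3]$ and the upper bound over $[0,3T]$. Both \Cref{lem:diagonalmeanvalue} and \Cref{lem:offdiagonalmeanvalue} are stated on $[T,2T]$, but their proofs only use the mean value theorem \cite[Cor.~3]{meanvalue} for Dirichlet polynomials of length $\le T^{11/18}$ (up to the smooth tail of $v$, which is negligible). Rerunning those proofs on $[4T/3,5T/3]$, and on dyadic blocks covering $[1,3T]$, gives the same orders of magnitude. The piece $[0,1]$ contributes $O(T^{1/2+\varepsilon})$, which is harmless.

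\emph{Step 2: compare the two sizes.} Put $D(\sigma)=T(\sigma-1/2)^{-N_1k^2}$ when $\sigma\ge 1/2+1/\log T$, and $D(1/2)=T(\log T)^{N_1k^2}$. In the range $\frac12+\frac1{\log T}\le\sigma\le\frac12+\frac{\log\log T}{\log T}$ we have
$$
D(\sigma)\gg T\Big(\frac{\log T}{\log\log T}\Big)^{N_1k^2}.
$$
This lower bound is exactly why the upper limit $\frac12+\frac{\log\log T}{\log T}$ is imposed. \Cref{lem:offdiagonalmeanvalue} bounds each off-diagonal integral, uniformly in $\sigma$, by $T(\log T)^{N_j/r^2+N_1(k-1/r)^2}(\log\log T)^2$.

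\emph{Step 3: check the exponent gap.} We need the remark that this bound is $o\big(T(\log T/\log\log T)^{N_1k^2}\big)$. Since $N_j\le N_1$,
$$
N_j/r^2+N_1(k-1/r)^2 \le N_1k^2-\frac{N_1}{r}\Big(2k-\frac{2}{r}\Big),
$$
and the saving $\frac{N_1}{r}(2k-2/r)$ is positive because $k>2/r$. A fixed power of $\log T$ beats any power of $\log\log T$, so the claim follows. This exponent comparison is the step I expect to be the main obstacle, because the window in $\sigma$ must be narrow enough that the $(\log\log T)^{N_1k^2}$ loss coming from $D(\sigma)$ does not eat the saving. With the window as chosen, the loss is only a power of $\log\log T$, and the comparison goes through.

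\emph{Step 4: conclude.} From the lower bound,
$$
\mathcal{L}(\sigma)\gg |b_1|^{2/r}D(\sigma)-o(D(\sigma))\gg D(\sigma).
$$
From the upper bound,
$$
\mathcal{L}(\sigma)\ll D(\sigma)+o(D(\sigma))\ll D(\sigma).
$$
These two inequalities give the asserted estimate in the stated range. The case $\sigma=1/2$ is identical, using the second statement of \Cref{lem:diagonalmeanvalue} and $\sigma=1/2$ in \Cref{lem:offdiagonalmeanvalue}.
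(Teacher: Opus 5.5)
Your proposal is correct and follows the paper's own one-line argument. Like the paper, you combine the lower bound over $[4T/3,5T/3]$ and the upper bound over $[0,3T]$ with \Cref{lem:diagonalmeanvalue} and \Cref{lem:offdiagonalmeanvalue}, using that the off-diagonal bound is $o\bigl(T(\log T/\log\log T)^{N_1k^2}\bigr)$, which you justify explicitly through the saving $\frac{N_1}{r}\bigl(2k-\frac{2}{r}\bigr)>0$ coming from $N_j\le N_1$ and $k>2/r$. Two small inaccuracies in Step 1 do not affect the result: $v$ vanishes on $[2,\infty)$, so $S_j$ has no tail, and the piece $t\in[0,1]$ need not be $O(T^{1/2+\varepsilon})$ once the factor $\abs{\mathcal{N}}^2$ is included, but applying the mean value theorem on all of $[0,3T]$ shows it is $O\bigl(T^{11/18}(\log T)^{O(1)}\bigr)=o(T)$, so the conclusion stands.
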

Lemma \ref{lem:linearcombmultithmmainlem} then follows from \Cref{lem:JK1lem} and \Cref{lem:L1lem} similar to \Cref{lem:multimainlem}.

\section{PROOFS OF COROLLARIES}

\label{sec:corsec}

\begin{proof}[Proof of \Cref{cor:dedekindcor}] 
The Dedekind zeta function of a number field $K$ is defined by the Dirichlet series 
$$
\zeta_K(s) = \sum_{n=1}^\infty \frac{a_K(n)}{n^s} ,
$$
where $a_K(n)$ denotes the number of integral ideals of $K$ with norm $n$.
The function $\zeta_K(s)$ satisfies the following properties (see \cite[Chap. 5]{iwaniec2004analytic}): 
\begin{enumerate}
\item 
$\zeta_K(s)$ admits a meromorphic continuation to $\mathbb{C}$ with a 
simple pole at $s=1$ and satisfies for $\sigma\ge 1/2$
and $\abs{t}\ge 1$
$$
\zeta_K(s)\ll_{K} \abs{t}^{A_K}
$$
uniformly for some $A_K>0$.
\item 
For $\sigma>1$ we have 
$$
 \zeta_K(s)=\prod_{p}\prod_{j=1}^d\left(1-\frac{\alpha_{K,j}(p)}{p^s}\right)^{-1},
$$
where $d = [K : \mathbb{Q}]$ is the degree of the field extension and 
$\alpha_{K,j}(p)$ are the local parameters of $\zeta_K(s)$ at $p$.
\item 
The local parameters satisfy $\abs{\alpha_{K,j}(p)}\le 1$.
\end{enumerate}
Applying binomial theorem as before, the coefficients $a_{K,k}(n)$ of $\zeta_K(s)^k$ satisfy
 $$
  \abs{a_{K, k}(n)}\le \tau_{dk}(n)\ll_{K,k,\varepsilon} n^{\varepsilon}
  $$
    for all $\varepsilon>0$. Hence, it follows that $\sum_p\sum_{m=2}^\infty\abs{a_{K,k}(p^m)}^2 p^{-m} < \infty$.
    
    For a rational prime $p$, an ideal of norm $p$ must be a prime ideal $\mathfrak{p}$ lying over $p$ with inertia degree $f=1$. Let $\mathfrak{P}\subset \mathcal{O}_L$ be a prime ideal lying over $p$. 
        
Let $\chi_{G/H}$ be the character of the permutation representation of $G$ on $G/H$. By \cite[Thm. 33]{marcus2018} for any rational prime $p$ unramified in $L$, the number of prime ideals $\mathfrak{p} \subset \mathcal{O}_K$ lying over $p$ with inertia degree $f=1$ equals the number of fixed points of the Frobenius automorphism of $\mathfrak{P}$ over $p$, denoted by $\operatorname{Frob}_{\mathfrak{P}}$, acting on the set of left cosets $G/H$ by left multiplication, which is precisely the value of the permutation character $ \chi_{G/H}(\operatorname{Frob}_{\mathfrak{P}}).$ Therefore for all rational primes $p$ unramified in $L$ we have
    $$
    a_K(p)=\chi_{G/H}(\operatorname{Frob}_{\mathfrak{P}}).
    $$ 
     Let $\operatorname{Frob}_p$ be the Frobenius conjugacy class. Since $\chi_{G/H}$ is constant on the conjugacy class $\operatorname{Frob}_p$, we denote its value on $\operatorname{Frob}_p$ by $\chi_{G/H}(\operatorname{Frob}_p)$. By the Chebotarev density theorem  \cite[Chap. VII, Thm. 13.4]{neukirch1999algebraic}, the Frobenius conjugacy classes are uniformly distributed among the conjugacy classes of $G$. Summing over the conjugacy classes $C \subset G$, we obtain
    
    \begin{align*}
     \sum_{p\le x}\frac{a_K(p)^2}{p} &= \sum_{\substack{p\le x \\ p \text{ unramified}}}\frac{a_K(p)^2}{p} + O(1)
     \\
      &= \sum_{\substack{p\le x \\ p \text{ unramified}}}\frac{\chi_{G/H}(\operatorname{Frob}_p)^2}{p} + O(1)
     \\
     &=\sum_{C \subset G}\chi_{G/H}(C)^2\sum_{\substack{p\le x \\ p \text{ unramified and }\operatorname{Frob}_p=C}}\frac{1}{p}+O(1)
    \\
    &= \sum_{C \subset G} \chi_{G/H}(C)^2 \left( \frac{|C|}{|G|} \log\log x \right) + O(1)
    \\
    &= \left( \frac{1}{|G|} \sum_{g \in G} \chi_{G/H}(g)^2 \right) \log\log x + O(1)
    \\
    &= M_K \log\log x + O(1).
    \end{align*}
    In the particular case where $K/\mathbb{Q}$ is a Galois extension, $H$ is trivial and $\chi_{G/H}$ is the regular character of $G$. Hence, $M_K=\abs{G}=d$.    
\end{proof}	

\begin{proof}[Proof of \Cref{cor:picor}] 
We write $L(s, \pi)=\sum_{n=1}^{\infty}\lambda_{\pi}(n)n^{-s}$. The $L$-functions 
under consideration satisfy the following properties (see \cite[Chap. 5]{iwaniec2004analytic}): 
\begin{enumerate}
\item 
$L(s,\pi)$ extends analytically to an entire function of order $1$ and satisfies for $\sigma\ge 1/2$
$$
L(s, \pi)\ll \abs{t}^{A_\pi}
$$
uniformly for some $A_\pi > 0$.
\item 
For $\sigma\ge 2$ we have $$
L(s, \pi)=\prod_{p}\prod_{j=1}^d\left(1-\frac{\alpha_{\pi , j}(p)}{p^s}\right)^{-1},
$$
where $d>0$ is the degree of the Euler product and $\alpha_{\pi , j}(p)$ are the 
local parameters of $L(s, \pi)$ at $p$.
\item 
For some $0<\gamma<1/2$ we have that
$$
\abs{\alpha_{\pi,j}(p)}\le p^\gamma.
$$
\end{enumerate}
For $k>0$ by applying binomial theorem, we have that
\begin{align*}
L(s, \pi)^k&=\prod_{p}\prod_{j=1}^d\left(1-\frac{\alpha_{\pi,j}(p)}{p^s}\right)^{-k} \\
&=
\prod_{p}\prod_{j=1}^d\left(\sum_{m=0}^\infty\frac{\Gamma(m+k)}
{\Gamma(k)m!}\frac{(\alpha_{\pi,j}(p))^m}{p^{ms}}\right)\\
&=
\prod_{p}\prod_{j=1}^d\left(\sum_{m=0}^\infty\frac{(\alpha_{\pi,j}(p))^m
\tau_k(p^m)}{p^{ms}}\right) \\
&=
\prod_{p}\left(\sum_{m=0}^\infty\frac{\lambda_{\pi , k}(p^m)}{p^{ms}}\right),
\end{align*}	
where $\lambda_{\pi , k}(n)$ is a multiplicative function given by
$$
\lambda_{\pi , k}(p^m)=\sum_{\substack{m_1+\ldots+m_d=m \\ 
m_1,\ldots, m_d\ge 0}}\alpha_{\pi,1}(p)^{m_1}\tau_k(p^{m_1})
\cdots\alpha_{\pi,d}(p)^{m_d}\tau_k(p^{m_d}).
$$
Thus
$$
\abs{\lambda_{\pi , k}(p^m)}\le \sum_{\substack{m_1+ \ldots +
m_d=m \\ m_1,\ldots, m_d \ge 0}}p^{m_1\gamma}\tau_k(p^{m_1})\cdots 
p^{m_d\gamma}\tau_k(p^{m_d})=p^{m\gamma}\tau_{kd}(p^m).
$$
Hence
$$
\abs{\lambda_{\pi , k}(n)}\le n^\gamma \tau_{kd}(n)\ll n^{\theta}
$$
for some $0<\theta<1/2$. Consider the identity
$$
\sum_{m\ge0}\lambda_{\pi , k}(p^{m})x^{m}
=\prod_{j\le d}(1-\alpha_{\pi,j}(p)x)^{-k}
=\exp\Bigl(k\sum_{\mu\ge1}\frac{a_\pi(p^{\mu})}{\mu}x^{\mu}\Bigr).
$$
It follows that 
$$
\lambda_{\pi , k}(p^{m})=\sum_{\mathbf v}\prod_{\mu\ge1}\frac{1}{\mu^{v_\mu}v_\mu!}\prod_{\mu\ge1}
\bigl(k\,a_\pi(p^{\mu})\bigr)^{v_\mu},
$$    
where $\mathbf v$
runs over the sequences $(v_\mu)_{\mu\ge1}$ of non-negative integers, almost all zero, with
$\sum_\mu\mu v_\mu=m$. By Cauchy-Schwarz inequality we have 
\begin{align*}
\sum_{m=2}^\infty \frac{\abs{\lambda_{\pi , k}(p^m)}^2}{p^m} &\le \sum_{m=2}^\infty \frac{1}{p^m} \sum_{\mathbf v}\prod_{\mu\ge1}\frac{1}{\mu^{v_\mu}v_\mu!}\prod_{\mu\ge1}
\bigl(k^2\,\abs{a_\pi(p^{\mu})^2}\bigr)^{v_\mu}
\\
&= \exp\Bigl(k^2\sum_{\mu\ge1}\frac{\abs{a_\pi(p^{\mu})}^2}{\mu p^\mu}\Bigr)-1-k^2\abs{a_\pi(p)}^2p^{-1}
\\
& =  \exp\Bigl(k^2\sum_{\mu\ge1}\frac{\abs{a_\pi(p^{\mu})}^2}{\mu p^\mu}\Bigr)-1-k^2\sum_{\mu\ge 1}\frac{\abs{a_\pi(p^{\mu})}^2}{\mu p^\mu}+k^2\sum_{\mu\ge 2}\frac{\abs{a_\pi(p^{\mu})}^2}{\mu p^\mu}
\\
& \ll \left(\sum_{\mu\ge 1}\frac{\abs{a_\pi(p^{\mu})}^2}{\mu p^\mu}\right)^2+\sum_{\mu\ge 2}\frac{\abs{a_\pi(p^{\mu})}^2}{\mu p^\mu}
\\
& \ll \abs{a_\pi(p)}^4p^{-2}+\sum_{\mu\ge 2}\frac{\abs{a_\pi(p^{\mu})}^2}{\mu p^\mu}.
\end{align*}
Hence,
\begin{align*}
\sum_p\sum_{m=2}^{\infty} \frac{\abs{\lambda_{\pi , k}(p^m)}^2}{p^m} 
&\ll
 \sum_p \abs{a_\pi(p)}^4p^{-2}+ \sum_p \sum_{\mu\ge 2}\frac{\abs{a_\pi(p^{\mu})}^2}{\mu p^\mu}
\\
&\ll \sum_p \abs{\lambda_{\pi}(p)}^2p^{-2+2\theta}+ \sum_p \sum_{\mu\ge 2}\frac{\abs{a_\pi(p^{\mu})}^2}{\mu p^\mu}
\\
& <\infty.
\end{align*}        
Considering the diagonal case of \cite[Thm. 2.2]{MTB} gives
$$
\sum_{p\le x}\frac{\abs{\lambda_{\pi}(p)}^2}{p}=\log\log x+O(1).
$$
It follows from \cite[Thm. 2.1]{MTB} (see also \cite{RS, Kim}) that for $d\le 4$ we have 
$$
\sum_p\sum_{m=2}^{\infty}\frac{\abs{a_{\pi}(p^m)\log p}^2}{p^m} < \infty.
$$
\end{proof}

\begin{proof}[Proof of \Cref{cor:linearcombcor}]
    Dirichlet characters are completely multiplicative, and their values are either zero or roots of unity. Applying binomial theorem as before, the coefficients $\chi_{k}(n)$ of $L(s,\chi)^k$ satisfy $\chi_{k}(n) \ll_{k,\varepsilon} n^\varepsilon$ for all $\varepsilon>0$.
    
    For any Dirichlet character $\chi$ modulo $q$, the sum over primes is given by
    $$
    \sum_{p \le x} \frac{\abs{\chi(p)}^2}{p} = \sum_{\substack{p \le x \\ p \nmid q}} \frac{1}{p} = \log\log x + O(1).
    $$
    
  For two distinct characters $\chi_1 \neq \chi_2 \bmod q$, the product $\chi_1 \overline{\chi_2}$ is a non-principal Dirichlet character modulo $q$, which we denote by $\psi$. To bound the prime sum 
   $$
    \sum_{p \le x} \frac{\psi(p)}{p^s} $$
  at $s=1+it$, we shift the real part slightly to $\sigma_0 = 1 + \frac{1}{\log x}$. The error introduced by this shift is bounded by
    $$
    \abs{\sum_{p \le x} \frac{\psi(p)}{p^{1+it}} - \sum_{p \le x} \frac{\psi(p)}{p^{\sigma_0+it}}} \le \sum_{p \le x} \frac{1}{p} \left( 1 - p^{-1/\log x} \right) \le \frac{1}{\log x} \sum_{p \le x} \frac{\log p}{p} \ll 1.
    $$
    
    For $\sigma=\sigma_0 > 1$, we take the principal branch of the logarithm for $L(s, \psi)$. Bounding the contribution from higher prime powers by $O(1)$, we have
    $$
    \Re \sum_{p \le x} \frac{\psi(p)}{p^{\sigma_0+it}} = \log \abs{L(\sigma_0+it, \psi)} - \Re \sum_{p > x} \frac{\psi(p)}{p^{\sigma_0+it}} + O(1).
    $$
    By partial summation, we have
    $$
    \abs{\sum_{p > x} \frac{\psi(p)}{p^{\sigma_0+it}}} \le \sum_{p > x} \frac{1}{p^{1+1/\log x}} \ll \int_x^\infty \frac{dy}{y^{1+1/\log x} \log y} = \int_1^\infty \frac{e^{-w}}{w} \, dw \ll 1.
    $$
    Finally, by standard convexity bounds for Dirichlet $L$-functions \cite[Lem. 5.2]{iwaniec2004analytic}, we have 
    $$
    \abs{L(\sigma_0+it, \psi)} \ll_q (2+\abs{t})^B
    $$
     for some $B>0$ uniformly for $\sigma \ge 1/2$. Thus,
    $$
    \Re\left( \sum_{p \le x} \frac{\psi(p)}{p^{1+it}} \right) \le \log \abs{L(\sigma_0+it, \psi)} + O(1) \le B \log(2+\abs{t}) + O(1).
    $$
\end{proof}

\section{Acknowledgments}
Authors are grateful to the Institute of Mathematical Sciences, Chennai for providing a 
conducive atmosphere during the project. The first and the third authors would also like to 
thank Max Planck Institute, Bonn where part of this project was done.

\end{document}